\documentclass[12pt]{article}
\usepackage{ulem}
\usepackage{forest}
\usepackage[colorlinks, citecolor=red]{hyperref}
\usepackage{graphicx,amsmath,amsfonts,amssymb,color,mathrsfs,amsthm}
\usepackage{algorithm}
\usepackage{algpseudocode}

\allowdisplaybreaks[4]
\usepackage{xltabular}

\UseRawInputEncoding
\usepackage{epsfig}
\newcommand{\chuhao}{\fontsize{19pt}{\baselineskip}\selectfont}

\definecolor{purple}{rgb}{0.00,0.00,0.00}
\newcommand{\purple}[1]{\textcolor{purple}{#1}}
\newcommand{\dH}{d_{\mathrm H}}
\newcommand{\dTV}{d_{\mathrm{TV}}}
\newcommand{\dKL}{d_{\mathrm{KL}}}

\newcommand{\Ent}{\mathrm{Ent}}
\newcommand{\E}{\mathbb{E}}

\newcommand{\osc}{\mathrm{osc}}
\newcommand{\Var}{\operatorname{Var}}
\newcommand{\Cov}{\operatorname{Cov}}
\newcommand{\cG}{\mathcal{G}}
\newcommand{\cU}{\mathcal{U}}
\newcommand{\cV}{\mathcal{V}}
\newcommand{\cW}{\mathcal{W}}
\newcommand{\cE}{\mathcal{E}}

\newcommand{\logit}{\operatorname{logit}}

\numberwithin{equation}{section}

 \newtheorem{theorem}{Theorem}[section]
 \newtheorem{lemma}{Lemma}[section]
 \newtheorem{assumption}{Assumption}[section]
 \newtheorem{proposition}{Proposition}[section]
 
 \newtheorem{remark}{Remark}[section]

 \newtheorem{corollary}{Corollary}[section]

\title{\bf\color{black} \chuhao{Second-order perturbation bounds for Gibbs samplers under strong spatial mixing}}
\date{}

\begin{document}
\author{
Na Lin 
\and Aaron Smith 
\and Yiqiang Q. Zhao
}

\maketitle
\begin{abstract}

The basic question in perturbation analysis of Markov chains is how small changes in their transition kernels affect their stationary distributions. Classical perturbation bounds typically require the kernel error to be much smaller than $1/\tau$, where $\tau$ is a mixing or relaxation time. Although this scaling is sharp for Markov chains in general, we investigate a general ``square-rooting'' phenomenon in which one-step errors of order roughly $1/\sqrt{\tau}$ can be sufficient for local updates. We proved a form of this phenomenon in \cite{lin2025perturbation} under strong assumptions. Here we substantially weaken these assumptions, and prove this phenomenon occurs using three distinct approaches. First, block factorization applies under structural assumptions on the stationary measures of both chains. Second, approximate block-update arguments extend the result to statistically relevant Markov chain Monte Carlo (MCMC) settings, where structural guarantees are available for the exact posterior and its associated sampler, but not for the perturbed posterior. Third, we use direct calculations for a class of models with hard constraints where neither general result is directly available. We illustrate these results in three MCMC settings and show how they directly inform the tuning of approximate MCMC algorithms.

\vspace{0.2cm}
\noindent \textbf{Keywords:} Perturbation analysis; mixing time; MCMC; graphical models; Hellinger distance

\end{abstract}

\section{Introduction}

In computational Bayesian statistics, it is common to write down an ``ideal'' MCMC algorithm whose stationary distribution is \textit{exactly} the posterior distribution of interest, but then implement and run an ``approximate'' MCMC algorithm whose transition rule and stationary distribution are merely close to the ideal chain. The main motivation is that ideal MCMC chains can be computationally prohibitive, and approximate chains can give similar statistical performance at a smaller computational cost. There is now a large literature on approximate MCMC chains in many settings, based on techniques such as data subsampling \cite{Korattikara2013AusterityIM,bardTall17,QuirozKohnVillaniTran2019}, data compression \cite{HugginsAdamsBroderick2017}, local pre-computation summaries \cite{BolandFrielMaire2018,LiSmith2026LPM}, and other approximate transition rules \cite{ChristenFox2005}.

Most approximate MCMC schemes come with tuning parameters that can be adjusted to increase accuracy at the cost of increased computational time. This leads to the main mathematical question in the perturbation analysis of Markov chains: if $K$ denotes the ideal transition kernel with invariant distribution $\mu$, and $\widetilde K$ denotes the implemented kernel with invariant distribution $\widetilde \mu$,
how small must the one-step error between $K$ and $\widetilde K$ be in order
to guarantee that $\mu$ and $\widetilde\mu$ are close?

Classical perturbation bounds answer this question in terms of a mixing or
relaxation scale.  Ignoring details of exactly what loss $d$ is being considered, they often show that if the perturbation error is bounded by
\[
    \sup_x d(K(x,\cdot),\widetilde K(x,\cdot)) \leq \delta
\]
and the ideal chain mixes on time scale $\tau$, then the stationary error is
controlled by a quantity of order $\tau\delta$ (see the survey \cite{rudolf2026perturbations}).
For high-temperature random-scan Gibbs samplers on \(n\) local variables, one usually has \(\tau\asymp n\), up to logarithmic factors depending on the precise notion of mixing. In this common regime, classical perturbation bounds therefore require one-step errors of order \(o(n^{-1})\).
Such estimates are known to be sharp for generic Markov chains, but they can be pessimistic for many specific chains.

To see this, we consider the case of a Gibbs sampler that targets $n$  i.i.d. Bernoulli$(\frac{1}{2})$ random variables, and a ``perturbed" Gibbs sampler that targets $n$ i.i.d. Bernoulli$(\frac{1}{2} + \delta_{n})$. Both Gibbs samplers have relaxation time $\Theta(n)$ and mixing time $\Theta(n \log(n))$ as long as $\delta_{n} = o(1)$ (see \cite{Levin2008MarkovCA}). It is also known that the total variation distance between these two stationary measures is of order $\min(1,\sqrt{n} \delta_{n})$.\footnote{This is an elementary calculation, but it is perhaps not well known and so we give a reference. See \textit{e.g.} Sections 1.3 and 1.4 of \cite{chat19} for full details. We highlight one element of the proof that seems relevant for this discussion: the total variation distance between Bernoulli \textit{vectors} is small if and only if the total variation distance between the \textit{sums} is small. In this very precise sense, we get the familiar square-root from the central limit theorem for full vectors rather than merely averages. This heuristic - that vectors are distinguishable essentially when sums are distinguishable - motivates the rest of this work.}
In particular, the perturbation is small as long as $\delta_{n} = o(n^{-\frac{1}{2}})$ is small compared to one over the square-root of the relaxation time. The reason classical perturbation bounds can be pessimistic is that worst-case perturbation bounds aggregate local errors as if they could all act ``in the same direction.'' In the independent-spin example, however, the squared local errors add rather than the errors themselves.
More generally, for graphical models at high temperature, correlations decay spatially, so local errors tend to cancel out before they can interact. This leads to the heuristic that many
small local errors should accumulate like weakly correlated errors.
This suggests that the stationary measure should have error roughly $\sqrt{\tau}\,\delta$ rather than $\tau\delta$. We refer to this as the square-rooting phenomenon.

A first result of this type was obtained by the authors in \cite{lin2025perturbation} under strong assumptions on both the form of the stationary measure and the updates. We don't see any way to directly compare the results of that paper to the results in this paper, and don't believe that either are immediate corollaries of the other. However, we believe that the current paper is typically stronger for most applications in statistics, barring the case that the dependency graph is quite tree-like. We discuss partial comparisons in Section~\ref{subsubsec:comparison-previous}, showing both (i) that Theorem~\ref{thm:main} gives a stronger estimate in the common setting of local Gibbs perturbations and (ii) that the factorization assumption in \cite{lin2025perturbation}  directly implies that its bounds must be inefficient for a common class of models.

No single argument covers all regimes in which this phenomenon occurs. We therefore develop three complementary approaches with different sets of assumptions that may be useful in different contexts.

For our first result, we take advantage of the block-factorization result of \cite{caputo2021block}. This result requires both the ideal and perturbed chains to have stationary measures with a specific dependence structure (see Equation \eqref{DefEqFactorMsr}). See Theorem~\ref{thm:main} for the main result, Algorithm~\ref{alg:noisy-gibbs} for a concrete MCMC algorithm that it applies to, and Section~\ref{subsubsec:comparison-previous} for a partial quantitative comparison with our previous work \cite{lin2025perturbation}.

The assumption that both the ideal and perturbed chains have a simple dependence structure can often be checked for the types of models from statistical physics studied in \cite{caputo2021block}. In MCMC practice, however, it can be difficult and impractical to force the stationary measure of a perturbed Markov chain to have the same structure as the stationary measure of the original chain. Fortunately, in MCMC practice one has control over the structure of the underlying MCMC sampler, and it is typically easy to force that sampler to respect the structure of the original chain's stationary measure. See Theorem~\ref{thm:approx-block-gibbs} for our main result in this regime, and Algorithm~\ref{alg:fresh-field-gibbs} for a concrete MCMC algorithm that it applies to.

For our third result, we note that there are many examples in MCMC practice for which neither structural assumption quite holds. When they fail, we can still use direct calculations. Section~\ref{subsec:Example3} treats weighted matchings, where the hard constraint that selected edges cannot share a vertex is not covered by the block-factorization estimates used above.

Despite their different assumptions and proofs, all three results bound the squared Hellinger distance by a sum of squared local perturbations:
\[
    \dH^2(\widetilde\mu,\mu)
    \leq C\sum_i \varepsilon_i^2.
\]
The meaning of $\varepsilon_i$ changes between the three arguments. For the first result, the statement takes the following concrete form. We consider finite-spin Gibbs measures on bounded-degree factor graphs. Let
\[
    \mu(\sigma) \propto \exp\left( \sum_{\phi\in\Phi}\phi(\sigma)\right)
\]
and let $\nu$ be obtained by a local perturbation
\[
    \nu(\sigma) \propto
    \exp\left( \sum_{\phi\in\Phi}\phi(\sigma)
                  + \sum_{\phi\in\Phi} g_\phi(\sigma)\right)
\]
with ``small'' errors $\delta_\phi=\frac12\osc(g_\phi)$. We show that, under appropriate assumptions,
\[
    \dH^2(\nu,\mu)
    \leq C \sum_{\phi\in\Phi}\delta_\phi^2,
\]
where the constant is independent of the size of the graph. Consequently, if the graph has $n$ local factors, perturbations of size $o(n^{-1/2})$ are sufficient for convergence in Hellinger distance. The second and third results are inspired by this argument, though the details are more difficult.

While our paper was motivated in part by early approximate-MCMC work for tall data, including austerity MCMC \cite{Korattikara2013AusterityIM} and noisy MCMC \cite{AlquierFrielEverittBoland2016}, the perturbation bounds are general and can be used in many other settings where perturbed Markov chains appear.

\subsection{Related Work}

Perturbation theory has a long history in the stability analysis of matrices and linear operators; see, for example, the classical monograph \cite{Kato1995}. 
The first generic perturbation results specialized to Markov
chains were developed for finite or discrete-state chains in
\cite{Schweitzer1968,Kartashov1986}. Subsequent work established perturbation bounds under uniform or geometric ergodicity and in several probability metrics, including total variation, Wasserstein, and \(L^2\) distances \cite{Mitrophanov2005,AlquierFrielEverittBoland2016,rudolf18,
negrea_rosenthal_2021}. 
Despite their different assumptions and formulations, classical perturbation bounds generally control the stationary error by the product of a worst-case one-step perturbation and a mixing or relaxation scale. 
Other work has exploited specific model or algorithmic structure to obtain more informative approximation guarantees. For example, \cite{RendellJohansenLeeWhiteley2021} develops a proxy-based parallel MCMC method with a tunable trade-off between computational efficiency and fidelity to the target distribution, while our earlier work \cite{lin2025perturbation} obtains improved perturbation bounds for Gibbs samplers associated with graphical models.
We refer to \cite{rudolf2026perturbations} for a more detailed historical and technical review.

One of the major uses of perturbation estimates is in the analysis of
approximate MCMC algorithms, where an ideal MCMC algorithm is replaced by a computationally cheaper approximation. This includes subsampling \cite{Korattikara2013AusterityIM,AlquierFrielEverittBoland2016,QuirozKohnVillaniTran2019} and pre-computation methods for
intractable likelihoods \cite{BolandFrielMaire2018}. Control variates are often used to improve the accuracy of such approximations \cite{QuirozKohnVillaniTran2019,CVSGMCMC19}. 
In the present paper, these algorithms serve primarily as motivating examples; the perturbation bounds themselves apply more broadly.


\subsection{Structure of Paper}

Section~\ref{Sec-Preliminaries} fixes notation, Section~\ref{sec:MainResult} proves the two general bounds, and Section~\ref{SecWorkedExamples} gives the three collections of worked examples.

Section~\ref{SecWorkedExamples} is the largest part of the paper, so we give a short guide to the main examples. Section~\ref{subsec:Example1} is a non-statistical illustrative example. It treats the high-temperature Ising model on a square box and compares the present bounds with \cite{lin2025perturbation}. Section~\ref{subsec:Example2} studies a simple image-restoration model with Ising priors. It applies our two general results to an augmented-state sampler (which can be analyzed using techniques from \cite{caputo2021block}) and a simpler noisy sampler (which cannot). It then studies the sufficient subsample-size scaling for a simple control-variate scheme. Section~\ref{subsec:Example3} starts with generic Markov chains for sampling weighted matchings, explains the connection to Bayesian record linkage, and gives a similar applied analysis.

We also briefly comment on the organization of the main subsections of Section~\ref{SecWorkedExamples}. Our primary motivation is application to MCMC, and so it is important that we show that our generic results do apply to realistic MCMC algorithms. On the other hand, our basic arguments are quite broad, and so we are motivated to make them easily-accessible in other contexts. To accommodate both of these motivations, we treat our three approaches in roughly the same way. First, we state and prove a result for a generic class of Markov chains, without any reference to statistics or data. Next, we introduce a class of statistical models and MCMC algorithms,\footnote{Except for Section ~\ref{subsec:Example1}, which is a toy example that has no statistical content.} then prove an estimate based on somewhat abstract bounds on the quantities appearing in the algorithms. Finally, we relate these abstract bounds to concrete choices (such as the choice of control variate) and explain how our generic perturbation estimates relate to the computational costs of different parts of these more concrete algorithms.

\section{Preliminaries}\label{Sec-Preliminaries}

\subsection{Generic probability notation}

Let $\mu, \nu$ be two probability distributions on a finite set $\Omega$.
The Hellinger distance between $\mu$ and $\nu$ is defined by
\[
\dH(\nu,\mu)=\left(\sum_{\sigma \in \Omega}\left(\sqrt{\nu(\sigma)}-\sqrt{\mu(\sigma)}\right)^2 \right)^{1/2},
\]
and the total variation (TV) distance is defined by
\[
\dTV(\nu,\mu)= \frac{1}{2} \sum_{\sigma \in \Omega}|\nu(\sigma)-\mu(\sigma)|.
\]
If $\nu$ is absolutely continuous with respect to $\mu$, then the Kullback--Leibler (KL) divergence of $\nu$ relative to $\mu$ is defined as
\[
\dKL(\nu,\mu)=\sum_{\sigma \in \Omega} \nu(\sigma) \log\left(\frac{\nu(\sigma)}{\mu(\sigma)}\right).
\]
We recall the standard inequalities (see, e.g., p.~135 of \cite{daskalakis20} and Lemma 2.4 of \cite{Tsybakov2009Nonparametric}):
\begin{equation}\label{eq:H-vs-TV}
\frac{1}{2}\dH^2(\nu,\mu)\leq \dTV(\nu,\mu) \leq \dH(\nu,\mu)
\end{equation}
and
\begin{equation}
\dH^2(\nu,\mu)\ \le\ \dKL(\nu,\mu).
\label{eq:H-vs-KL}
\end{equation}

By a small abuse of notation, when $d$ is a distance on measures, we extend it to a distance on transition kernels via the formula:

\[
d(Q,K) = \sup_{x \in \Omega} d(Q(x,\cdot), K(x,\cdot)).
\]

For any function $g \, : \, \Omega \to \mathbb{R}$, we define the oscillation
\[
\osc(g) = \sup_{\sigma \in \Omega} g(\sigma) - \inf_{\sigma \in \Omega} g(\sigma).
\]

\subsection{Notation for Graphical Models}

Fix $s \in \mathbb{N}$ and define $[s] = \{1,2,\ldots,s\}$. We denote by the triple $G=(V,\Phi,E)$ a factor graph, following the same conventions as \cite{lin2025perturbation} and the work referenced therein:

\begin{itemize}
    \item $V$ is any finite set. We call this set the \textit{vertices} of the graph.
    \item $E \subseteq \binom{V}{2}$ is a set of unordered pairs of distinct vertices. We call this set the \textit{edges} of the graph.
    \item $\Phi$ is any finite indexed collection of functions from $\Omega \equiv [s]^{V}$ to $\mathbb{R}$. Each $\phi\in\Phi$ is equipped with a nonempty \textit{scope} $S[\phi]\subseteq V$ and is assumed to depend only on the coordinates in $S[\phi]$.\footnote{Informally, we think of the scope as the collection of variables appearing in the formula for $\phi$. We use this slightly awkward definition to deal with simple parametric families like $\phi(\omega) = \alpha \omega_{1},$ where we wish to think of $\phi$ as depending on $\omega_{1}$ for all parameter values even though $\phi$ does not depend on $\omega_{1}$ when $\alpha=0$.} We call this collection the \textit{factors} of the graph.
\end{itemize}

\purple{Let \(d\) denote the graph distance induced by \((V,E)\).} We abuse notation slightly by defining the distance between two vertex sets $A$ and $B$ as
\[
d(A,B)=\min\{d(x,y):x\in A,y\in B\}.
\]
For a set $A\subset V$, $A^c=V\setminus A$ and the exterior boundary is $\partial A=\{x\in A^c: d(x,A)=1\}$.
For an integer $r>0$, we denote by $B_r(A)$ the set of vertices whose distance from $A$ is strictly less than $r$:
\[
B_r(A)=\{x\in V: d(x,A)< r\}.
\]
For $x\in V$, we write $B_r(x)=B_r(\{x\})$.

We define the Gibbs measure associated with a factor graph to be the probability measure on $\Omega$ given by:
\begin{equation}\label{DefEqFactorMsr}
\mu(\sigma)\propto\exp\left(\sum_{\phi\in \Phi}\phi(\sigma)\right),
\end{equation}
where $\sigma$ represents a configuration in $\Omega$.
The declared scopes let us view the factor graph as a bipartite graph in the usual sense, with vertices corresponding to $V\cup\Phi$ and an edge between $v$ and $\phi$ whenever $v\in S[\phi]$. With this in mind, for $v\in V$, denote by
\[
E[v]=\{\phi\in\Phi:v\in S[\phi]\}
\]
the set of factors whose declared scope contains $v$. Throughout this paper, we denote by
\[
\Delta=\max_{x\in V}|E[x]|<\infty,
\qquad
\kappa=\max_{\phi\in\Phi}|S[\phi]|<\infty,
\]
the maximum degrees of the two types of vertices in this bipartite graph.

For vertices $A \subset V$ and configuration $\sigma \in \Omega$, we denote by $\sigma_A \in [s]^A$ the restriction of the configuration $\sigma$ to $A$, i.e.,
\[
\sigma_A(x) = \sigma(x), \quad x \in A.
\]
For a measure $\mu$ on $\Omega$, we let $\mu_A$ denote the marginal of $\mu$ on $[s]^{A}$, defined by
\[
\mu_A(\sigma_A) = \sum_{\eta \in \Omega:\, \eta_A = \sigma_A} \mu(\eta),
\quad \sigma_A \in [s]^A.
\]
For $A \subset B \subset V$ and $\tau \in \Omega$ \purple{with \(\mu_{B^c}(\tau_{B^c})>0\)}, let $\mu_A^{\tau_{B^c}}$ denote the marginal on $A$ of the conditional distribution of $\mu$ given $\eta_{B^c} = \tau_{B^c}$, that is,
\[
\mu_A^{\tau_{B^c}}(\sigma_A)
=
\frac{\sum_{\eta \in \Omega:\, \eta_A = \sigma_A,\ \eta_{B^c} = \tau_{B^c}} \mu(\eta)}
{\sum_{\eta \in \Omega:\, \eta_{B^c} = \tau_{B^c}} \mu(\eta)}=\frac{\sum_{\eta \in \Omega:\, \eta_A = \sigma_A,\ \eta_{B^c} = \tau_{B^c}} \mu(\eta)}
{\mu_{B^c}(\tau_{B^c})}.
\]
\purple{Measures of the form \eqref{DefEqFactorMsr} have full support, so these conditionals are defined for every boundary configuration.}
For $\sigma_A \in [s]^A$ and $\tau_{A^c} \in [s]^{A^c}$, we denote by $\sigma_A \tau_{A^c}$ the unique element of $\Omega$ whose restriction to $A$ is $\sigma_{A}$ and whose restriction to $A^{c}$ is $\tau_{A^{c}}$. We then have the following conditional probability formula, in this notation:
\begin{equation}\label{ConditionlProbability}
\mu_A^{\tau_{A^c}}(\sigma_A)
=\frac{\mu(\sigma_A \tau_{A^c})}{\mu_{A^c}(\tau_{A^c})}.
\end{equation}

For any measurable function $f : \Omega \to \mathbb{R}_+$, let
\[
\mu_A^{\tau_{A^c}}(f)
=
\sum_{\sigma_A \in [s]^A} \mu_A^{\tau_{A^c}}(\sigma_A)\, f(\sigma_A \tau_{A^c})
\]
denote the expectation of $f$ under $\mu_A^{\tau_{A^c}}$. We define the function $\mu_A f : [s]^{A^c} \to \mathbb{R}$ by
\[
(\mu_A f)(\tau_{A^c}) = \mu_A^{\tau_{A^c}}(f).
\]

For $A \subset V$ and $\tau \in \Omega$, the local entropy of a nonnegative function $f$ with respect to $\mu_A^{\tau_{A^c}}$ is defined by
\begin{equation}\label{Def:LocalEntropy}
\operatorname{Ent}_A^{\tau_{A^c}}(f)
=
\mu_A^{\tau_{A^c}} \!\left[
f \log \!\left( \frac{f}{\mu_A^{\tau_{A^c}}(f)} \right)
\right].
\end{equation}
We further define the function $\operatorname{Ent}_A f : [s]^{A^c} \to \mathbb{R}_+$ by
\[
(\operatorname{Ent}_A f)(\tau_{A^c}) = \operatorname{Ent}_A^{\tau_{A^c}}(f).
\]
In the special case where $A = V$, so that $A^c = \emptyset$ and $\mu_A^{\tau_{A^c}} = \mu$, we just define
\[
\operatorname{Ent}_V(f)
=
\mu \!\left[
f \log \!\left( \frac{f}{\mu(f)} \right)
\right].
\]

We recall the following equalities. The first follows directly from the definition of the KL divergence, while the second is an immediate consequence of the conditional entropy decomposition (see, e.g., (2.10) in \cite{caputo2015approximate}) together with the chain rule for KL divergence (see Theorem 2.5.3 of \cite{cover_thomas_2006}):

\begin{lemma}
\label{lem:Ent-is-KL}
Suppose that $\nu$ is absolutely continuous with respect to $\mu$, and let $f=\frac{\nu}{\mu}$.
Then
\[
\Ent_V(f)=\dKL(\nu,\mu).
\]
Moreover, for any $A\subset V$,
\begin{equation}
\mu(\Ent_A f)
=
\E_{\nu_{A^c}}\!\left[
\dKL\!\left(\nu_A^{\tau_{A^c}},\mu_A^{\tau_{A^c}}\right)
\right].
\label{eq:blockKL}
\end{equation}
\end{lemma}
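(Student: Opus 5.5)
Both identities follow by writing everything out in terms of $f=\nu/\mu$ and computing directly; no earlier result from the paper is needed.

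\textbf{First identity.} Since $\nu$ is a probability measure, $\mu(f)=\sum_\sigma \mu(\sigma)\,\nu(\sigma)/\mu(\sigma)=1$. Hence
\[
\Ent_V(f)=\sum_\sigma \mu(\sigma) f(\sigma)\log f(\sigma)=\sum_\sigma \nu(\sigma)\log\frac{\nu(\sigma)}{\mu(\sigma)}=\dKL(\nu,\mu).
\]
Here we use the convention $0\log 0=0$, and points with $\mu(\sigma)=0$ contribute nothing. For Gibbs measures $\mu$ has full support, so this case does not arise.

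\textbf{Second identity, step one: the conditional mean of $f$.} Fix $\tau_{A^c}$ with $\mu_{A^c}(\tau_{A^c})>0$. Using \eqref{ConditionlProbability},
\[
\mu_A^{\tau_{A^c}}(f)=\sum_{\sigma_A}\frac{\mu(\sigma_A\tau_{A^c})}{\mu_{A^c}(\tau_{A^c})}\cdot\frac{\nu(\sigma_A\tau_{A^c})}{\mu(\sigma_A\tau_{A^c})}=\frac{\nu_{A^c}(\tau_{A^c})}{\mu_{A^c}(\tau_{A^c})}.
\]

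\textbf{Step two: the local entropy.} Write $r=\nu_{A^c}(\tau_{A^c})/\mu_{A^c}(\tau_{A^c})$. Suppose $\nu_{A^c}(\tau_{A^c})>0$, so that $\nu_A^{\tau_{A^c}}$ is defined. Then
\[
\frac{f(\sigma_A\tau_{A^c})}{r}=\frac{\nu_A^{\tau_{A^c}}(\sigma_A)}{\mu_A^{\tau_{A^c}}(\sigma_A)}
\quad\text{and}\quad
\mu_A^{\tau_{A^c}}(\sigma_A)\,f(\sigma_A\tau_{A^c})=r\,\nu_A^{\tau_{A^c}}(\sigma_A).
\]
Substituting into \eqref{Def:LocalEntropy} gives
\[
\Ent_A^{\tau_{A^c}}(f)=r\,\dKL\!\left(\nu_A^{\tau_{A^c}},\mu_A^{\tau_{A^c}}\right).
\]
If instead $\nu_{A^c}(\tau_{A^c})=0$, then $f(\cdot\,\tau_{A^c})\equiv 0$ and the local entropy is $0$ under the convention $0\log 0=0$.

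\textbf{Step three: averaging.} Take the expectation over $\tau_{A^c}\sim\mu_{A^c}$. The weight $\mu_{A^c}(\tau_{A^c})\,r$ equals $\nu_{A^c}(\tau_{A^c})$, which yields \eqref{eq:blockKL}. This is exactly the chain rule for KL divergence: $\dKL(\nu,\mu)=\dKL(\nu_{A^c},\mu_{A^c})+\E_{\nu_{A^c}}[\dKL(\nu_A^{\tau},\mu_A^{\tau})]$. It is also consistent with the decomposition $\Ent_V f=\Ent(\mu_A f)+\mu(\Ent_A f)$.

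\textbf{Main obstacle.} There is no genuine difficulty. The only care needed is with the zero-probability conventions: $\nu$ may vanish on some boundary configurations, in which case the corresponding conditionals of $\nu$ are undefined. Such terms contribute zero to both sides, as shown in step two.
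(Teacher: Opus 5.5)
Your proof is correct, but it reaches \eqref{eq:blockKL} by a more direct route than the paper.

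The paper never computes the local entropy itself. It quotes the entropy decomposition $\Ent_V(f)=\Ent_V(\mu_A f)+\mu(\Ent_A f)$, with $\mu_A f$ viewed as a function on $\Omega$, and the chain rule $\dKL(\nu,\mu)=\dKL(\nu_{A^c},\mu_{A^c})+\E_{\nu_{A^c}}[\dKL(\nu_A^{\tau_{A^c}},\mu_A^{\tau_{A^c}})]$. It then matches the left-hand sides and the marginal terms using the first identity. That identity is applied once to $\nu,\mu$ and once to their $A^c$-marginals; the latter goes through $\mu_A f=\nu_{A^c}/\mu_{A^c}$, which is your step one. Finally it cancels.

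You instead prove the boundary-by-boundary identity $\Ent_A^{\tau_{A^c}}(f)=\frac{\nu_{A^c}(\tau_{A^c})}{\mu_{A^c}(\tau_{A^c})}\,\dKL(\nu_A^{\tau_{A^c}},\mu_A^{\tau_{A^c}})$ and then average over $\mu_{A^c}$. Your version is self-contained, needs no cancellation, and yields a pointwise statement stronger than \eqref{eq:blockKL}. It is in fact the same computation the paper carries out later, for the quantity $m$ and the density $f/m$, in the proof of Proposition~\ref{prop:hellinger-block-bound}.

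The paper's version is shorter given the citations. It also makes explicit that \eqref{eq:blockKL} is the KL chain rule rewritten in entropy language, which is how the lemma is used alongside the chain rule in the proof of Theorem~\ref{thm:approx-block-gibbs}.
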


\subsection{Perturbed Gibbs measure}

Let $\mu$ be the Gibbs measure defined in \eqref{DefEqFactorMsr}. For a measurable function $g: \Omega\to \mathbb{R}$, we define the perturbed Gibbs measure $\nu$ by
\begin{equation}\label{Perturbed-Measure}
\nu(\sigma)\propto\exp\left(\sum_{\phi\in \Phi}\phi(\sigma)+g(\sigma)\right).
\end{equation}
That is, $\nu$ is obtained from $\mu$ by an exponential tilt:
\begin{equation}\label{TiltedForm}
\nu(\sigma)=
\frac{e^{g(\sigma)}\mu(\sigma)}{\mu(e^g)},
\qquad \sigma\in\Omega.
\end{equation}
In particular, $\nu$ is absolutely continuous with respect to $\mu$, i.e., $\nu \ll \mu$.

This formulation provides a unified framework for incorporating a broad class of perturbations. The function $g$ may represent modifications of external fields, interaction potentials, or more general energy contributions. In particular, if $g$ admits a decomposition of the form
\begin{equation}\label{DecompositionOfG}
g(\sigma)=\sum_{\phi \in \Phi} g_{\phi}(\sigma),
\end{equation}
where each $g_{\phi}$ depends only on the coordinates in $S[\phi]$, then $\nu$ remains a Gibbs measure with modified potentials $\phi + g_{\phi}$. We define the size of the local perturbation by
\[
\delta_{\phi}
=
\frac12\osc(g_{\phi})
=
\inf_{c\in\mathbb R}\|g_{\phi}-c\|_\infty,
\]
where $\|\cdot\|_\infty$ denotes the $L^\infty$ norm. This definition is unchanged if a constant is added to $g_{\phi}$, as it should be because such a change does not alter the Gibbs measure.

{\color{purple}Section~\ref{sec:MainResult} uses this setup first to compare two stationary measures and then to compare exact and approximate local updates.}

\section{Main Results}\label{sec:MainResult}

{\color{purple}We prove two general results. Section~\ref{subsec:main-setup} gives the factorization assumptions used below. Section~\ref{subsec:local-gibbs-perturbations} assumes that the perturbed stationary measure has the same local Gibbs structure as the ideal measure, but makes no assumption on the update rule; this result only requires approximate tensorization over single sites. Section~\ref{subsec:approx-block-gibbs} assumes that the perturbed kernel uses random-scan block updates, but makes no assumption on the form of its invariant measure; this result only requires block factorization for the scan used by the kernel. The stronger Assumption~\ref{assume-block-factor}, which is available in all of our block-factorization applications, implies both hypotheses.}

\subsection{Setup}\label{subsec:main-setup}

One of our key assumptions is the following \textit{strong spatial mixing} (SSM) condition, which quantifies how the influence of a perturbation at a boundary site on a finite set $A$ decays exponentially with the distance to $A$. There are many equivalent notions of strong spatial mixing; we use the definition adopted in \cite{Cesi2001} and used by \cite{caputo2021block}:

\begin{assumption}\label{assume-SSM}
There exist constants \purple{$0<M,m<\infty$} such that for any sets $A \subset B \subset V$, any site $x \in \partial B$, and any boundary configurations $\tau_{B^c}, \tau'_{B^c} \in [s]^{B^c}$ satisfying $\tau(y) = \tau'(y)$ for all $y \in B^c\setminus\{x\}$, we have
\[
\left\| \frac{\mu_{A}^{\tau_{B^c}}}{\mu_{A}^{\tau'_{B^c}}} - 1 \right\|_\infty \leq M e^{-m\, d(x,A)}.
\]
\end{assumption}

\begin{remark}[Satisfying the SSM condition]
There are many notions of strong spatial mixing. Famously (though non-obviously), there is a sense in which they are equivalent (see, e.g., Theorem 2.1 of \cite{DobrushinShlosman1987CAConstructive}). We note that Assumption~\ref{assume-SSM} implies the classical strong spatial mixing condition in TV distance with the same exponential rate and prefactor $M/2$, since for any $\tau_{B^c}, \tau'_{B^c} \in [s]^{B^c}$ satisfying $\tau(y) = \tau'(y)$ for all $y \in B^c\setminus\{x\}$,
\begin{align*}
\dTV(\mu_{A}^{\tau_{B^c}},\mu_{A}^{\tau'_{B^c}}) &=\frac{1}{2}\sum_{\sigma\in [s]^A} |\mu_{A}^{\tau_{B^c}}(\sigma)-\mu_{A}^{\tau'_{B^c}}(\sigma)|\\
&= \frac{1}{2}\sum_{\sigma\in [s]^A} \mu_{A}^{\tau'_{B^c}}(\sigma)\left|\frac{\mu_{A}^{\tau_{B^c}}(\sigma)}{\mu_{A}^{\tau'_{B^c}}(\sigma)} -1\right|\\
& \leq \frac{1}{2}\sum_{\sigma\in [s]^A} \mu_{A}^{\tau'_{B^c}}(\sigma) \cdot M e^{-m\, d(x,A)} =\frac{M}{2} e^{-m\, d(x,A)}.
\end{align*}
\purple{For the Ising models considered in Sections~\ref{subsec:Example1} and~\ref{subsec:Example2}, Lemma~\ref{lem:ising-dobrushin-block-factorization} verifies Assumption~\ref{assume-SSM} directly from the Dobrushin bound used there.}

\end{remark}

We use the following block-factorization assumption.

\begin{assumption}[Block factorization]\label{assume-block-factor}
There exists a positive constant $C_{\mathrm{BF}}<\infty$ such that, for any family of blocks $\mathcal{A} \subseteq 2^V$, any choice of nonnegative weights $\alpha = \{\alpha_A\}_{A \in \mathcal{A}}$, and any function $f : \Omega \to \mathbb{R}_+$ with $f \log^+ f \in L^1(\mu)$,
\begin{equation}\label{Ineq-BlockFactor}
\gamma(\alpha)\,\mathrm{Ent}_V(f)
\leq C_{\mathrm{BF}} \sum_{A \in \mathcal{A}} \alpha_A \,\mu\big(\mathrm{Ent}_A f\big),
\end{equation}
where
\[
\gamma(\alpha) = \min_{x \in V} \sum_{A\in \mathcal{A}:\, x\in A} \alpha_A.
\]
\end{assumption}

\begin{remark}
For all of the examples studied with block-factorization techniques in this paper, Assumption~\ref{assume-SSM} implies Assumption~\ref{assume-block-factor} by \purple{\cite[Theorem~2.3 and Section~2.2.4]{caputo2021block}}. Thus Assumption~\ref{assume-block-factor} does not need to be checked separately in any application in this paper.

We state Assumption~\ref{assume-block-factor} for general factor graphs in order to ``future-proof" our result. There is quite a lot of recent work in extending results such as \cite[Theorem~2.3]{caputo2021block} to more general factor graphs, and we expect to see statistically-relevant extensions in the future. See, for example, \cite{Chen2024Factorization,CaputoChenParisi2026}.
\end{remark}

The first result below only needs the singleton consequence of Assumption~\ref{assume-block-factor}.

\begin{assumption}[Approximate tensorization]\label{assume-approx-tensor}
There exists a constant $C_{\mathrm{AT}}<\infty$ such that, for every function $f:\Omega\to\mathbb R_+$ with $f\log^+f\in L^1(\mu)$,
\begin{equation}\label{Ineq-ApproxTensor}
\Ent_V(f)
\le
C_{\mathrm{AT}}
\sum_{x\in V}\mu\bigl(\Ent_{\{x\}}f\bigr).
\end{equation}
\end{assumption}

Assumption~\ref{assume-block-factor} implies Assumption~\ref{assume-approx-tensor} with $C_{\mathrm{AT}}=C_{\mathrm{BF}}$ by taking the singleton blocks $\mathcal A=\{\{x\}:x\in V\}$ with unit weights.

Combining Inequality~\eqref{eq:H-vs-KL} with Lemma~\ref{lem:Ent-is-KL} and Assumption~\ref{assume-block-factor}, we immediately obtain the following corollary.

\begin{corollary}\label{Corollary-H2-BlockKL}
Suppose that Assumption~\ref{assume-block-factor} holds and that $\nu$ is absolutely continuous with respect to $\mu$. Then, with $C_{\mathrm{BF}}$ as in Assumption~\ref{assume-block-factor}, for any family of blocks $\mathcal{A} \subseteq 2^V$ and any choice of nonnegative weights $\alpha = \{\alpha_A\}_{A \in \mathcal{A}}$ with $\gamma(\alpha)>0$,
\begin{equation}\label{Ineq-KL-Factor}
\dH^2(\nu,\mu)\leq \dKL(\nu,\mu)
\leq \frac{C_{\mathrm{BF}}}{\gamma(\alpha)} \sum_{A \in \mathcal{A}} \alpha_A \, \E_{\nu_{A^c}} \left[\dKL\left(\nu_A^{\tau_{A^c}},\mu_A^{\tau_{A^c}}\right)\right].
\end{equation}
\end{corollary}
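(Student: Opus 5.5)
The plan is a direct chain of three results already stated. Set $f=\nu/\mu$. This is well defined because $\nu\ll\mu$, and it makes sense on all of $\Omega$ since measures of the form \eqref{DefEqFactorMsr} have full support. Because $\Omega$ is finite, $f$ is bounded and nonnegative, so $f\log^+f\in L^1(\mu)$ holds trivially. Hence $f$ is an admissible test function in Assumption~\ref{assume-block-factor}.

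The first inequality $\dH^2(\nu,\mu)\le\dKL(\nu,\mu)$ is exactly \eqref{eq:H-vs-KL}. For the second inequality, the first part of Lemma~\ref{lem:Ent-is-KL} gives $\dKL(\nu,\mu)=\Ent_V(f)$. Since $\gamma(\alpha)>0$, we may divide \eqref{Ineq-BlockFactor} by $\gamma(\alpha)$ to get
\[
\Ent_V(f)\le \frac{C_{\mathrm{BF}}}{\gamma(\alpha)}\sum_{A\in\mathcal A}\alpha_A\,\mu(\Ent_A f).
\]
We then replace each term $\mu(\Ent_A f)$ using identity \eqref{eq:blockKL} from Lemma~\ref{lem:Ent-is-KL}, namely $\mu(\Ent_A f)=\E_{\nu_{A^c}}[\dKL(\nu_A^{\tau_{A^c}},\mu_A^{\tau_{A^c}})]$. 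The weights $\alpha_A$ are nonnegative, so the substitution respects the inequality term by term, and we obtain \eqref{Ineq-KL-Factor}.

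There is essentially no obstacle here. The only points to check are that $f$ meets the integrability hypothesis and that the conditionals $\mu_A^{\tau_{A^c}}$ are defined for every boundary configuration; both follow from finiteness of $\Omega$ and full support of $\mu$. One also needs the convention that when $A=V$ the block term reduces to $\Ent_V f=\dKL(\nu,\mu)$, which is consistent with \eqref{eq:blockKL} with $A^c=\emptyset$.
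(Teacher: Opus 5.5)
Your proposal is correct and matches the paper, which obtains this corollary directly by chaining \eqref{eq:H-vs-KL}, the identity $\dKL(\nu,\mu)=\Ent_V(f)$ and \eqref{eq:blockKL} from Lemma~\ref{lem:Ent-is-KL}, and Assumption~\ref{assume-block-factor} applied to $f=\nu/\mu$ and divided by $\gamma(\alpha)>0$. You also correctly settle the only side conditions worth checking: integrability of $f\log^+ f$ on the finite space, and full support of $\mu$ so that the conditionals are defined.
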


\subsection{\purple{Perturbing the Stationary Measure}}\label{subsec:local-gibbs-perturbations}

We will also use the following estimate on the KL divergence between two measures in terms of the oscillation of the tilt:

\begin{lemma}
\label{lem:tilt}
Let $\mu$, $\nu$ be Gibbs measures on a finite set $\Omega$ defined as in \eqref{DefEqFactorMsr} and \eqref{Perturbed-Measure} with perturbation $g:\Omega \to \mathbb{R}$.
Then
\[
\dKL(\nu, \mu) \le \frac{\osc^2(g)}{8}.
\]
\end{lemma}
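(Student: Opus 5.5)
The plan is to write the KL divergence as an explicit function of the tilt and then bound it with Hoeffding's lemma. By \eqref{TiltedForm}, $\nu/\mu = e^{g}/\mu(e^g)$, so
\[
\dKL(\nu,\mu)=\E_\nu[g]-\log \mu(e^{g}).
\]
Introduce the cumulant generating function $\Lambda(t)=\log\mu(e^{tg})$ for $t\in[0,1]$. This function is finite and smooth because $\Omega$ is finite. We have $\Lambda(0)=0$ and $\Lambda'(t)=\E_{\nu_t}[g]$, where $\nu_t\propto e^{tg}\mu$ is the intermediate tilted measure. In particular $\Lambda'(1)=\E_\nu[g]$, and therefore
\[
\dKL(\nu,\mu)=\Lambda'(1)-\Lambda(1)+\Lambda(0).
\]

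The key step is Taylor's theorem with integral remainder. Expanding $\Lambda(0)$ around $t=1$ gives
\[
\Lambda(0)=\Lambda(1)-\Lambda'(1)+\int_0^1 t\,\Lambda''(t)\,dt,
\]
and hence
\[
\dKL(\nu,\mu)=\int_0^1 t\,\Lambda''(t)\,dt .
\]
The second derivative is a variance: $\Lambda''(t)=\Var_{\nu_t}(g)$. Since $g$ takes values in an interval of length $\osc(g)$, Popoviciu's inequality (the variance of a random variable supported in an interval of length $L$ is at most $L^2/4$) gives $\Lambda''(t)\le \osc^2(g)/4$ for every $t$. Integrating, $\int_0^1 t\,dt=1/2$, so
\[
\dKL(\nu,\mu)\le \frac{\osc^2(g)}{8}.
\]
This is exactly the constant in Hoeffding's lemma.

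An equivalent presentation is as follows. First observe $\dKL(\nu,\mu)\le \E_\nu[g]-\E_\mu[g]-\bigl(\log\mu(e^{g})-\E_\mu[g]\bigr)$. Hoeffding's lemma bounds the centered log-MGF term only from above, so that route needs care. The Taylor-remainder route above is cleaner and self-contained, and I would present that one.

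I do not expect a real obstacle. The only point to check is the sign and weight in the remainder. Writing $\Lambda'(1)-\Lambda(1)+\Lambda(0)=\int_0^1(\Lambda'(1)-\Lambda'(s))\,ds=\int_0^1\int_s^1\Lambda''(t)\,dt\,ds$ and exchanging the order of integration gives weight $t$, which confirms the factor $1/2$. Finiteness of $\Omega$ makes all differentiation under the sum trivial.
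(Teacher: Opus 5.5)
Your proposal is correct and follows the paper's proof essentially step for step: the same log-moment generating function, the identity $\dKL(\nu,\mu)=\int_0^1 t\,\Var_{\nu_t}(g)\,dt$ (which the paper gets by integration by parts, you by the Taylor remainder), and the variance bound $\osc^2(g)/4$. One small correction to your aside: the displayed ``$\le$'' there is actually an equality, and that paragraph can simply be dropped.
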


\begin{proof}
Set
\[
\psi(t)=\log\mu(e^{tg}),
\qquad
\mu_t(\sigma)=\frac{e^{tg(\sigma)}\mu(\sigma)}{\mu(e^{tg})},
\qquad 0\le t\le 1.
\]
Then \(\psi'(t)=\mu_t(g)\) and \(\psi''(t)=\Var_{\mu_t}(g)\). By \eqref{TiltedForm},
\[
\dKL(\nu,\mu)=\psi'(1)-\psi(1).
\]
Since \(\psi(0)=0\), integration by parts gives
\[
\dKL(\nu,\mu)
=
\int_0^1 t\,\Var_{\mu_t}(g)\,dt.
\]
Since a random variable taking values in an interval of length \(\osc(g)\) has variance at most \(\osc^2(g)/4\),
\[
\dKL(\nu,\mu)
\le
\frac{\osc^2(g)}{4}\int_0^1 t\,dt
=
\frac{\osc^2(g)}{8}.
\]
\end{proof}

Lemma~\ref{lem:tilt} provides a global bound on the KL divergence between $\mu$ and $\nu$. We now turn to a localized version and derive an upper bound on the conditional KL divergence for Gibbs measures restricted to a block.
Recall that for $x\in V$, we let
\[
E[x]=\{\phi \in \Phi:x\in S[\phi]\}.
\]
For $A\subseteq V$, we define the set of factors whose declared scope intersects $A$ by
\[
E[A]=\bigcup_{x\in A} E[x].
\]

\begin{lemma}
\label{lem:UpperBoundlocalKL}
Let $\mu$, $\nu$ be Gibbs measures on a finite set $\Omega$, defined as in \eqref{DefEqFactorMsr} and \eqref{Perturbed-Measure}, with perturbation $g:\Omega \to \mathbb{R}$ admitting the decomposition \eqref{DecompositionOfG}. Then for any $A\subseteq V$ and $\tau_{A^c}\in [s]^{A^c}$,
\begin{equation}\label{eq:localKL}
\dKL(\nu_A^{\tau_{A^c}},\mu_A^{\tau_{A^c}})\ \le\ \frac12\Big(\sum_{\phi\in E[A]}\delta_\phi\Big)^2
\ \le\ \frac{|E[A]|}{2}\sum_{\phi\in E[A]}\delta_\phi^2.
\end{equation}
\end{lemma}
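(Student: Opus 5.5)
The plan is to reduce the conditional statement to the global tilt bound of Lemma~\ref{lem:tilt}, applied on the smaller state space $[s]^A$.

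\textbf{Step 1: the conditional of $\nu$ is a tilt of the conditional of $\mu$.} Fix $A\subseteq V$ and $\tau_{A^c}$. By \eqref{ConditionlProbability} and \eqref{TiltedForm}, for $\sigma_A\in[s]^A$,
\[
\nu_A^{\tau_{A^c}}(\sigma_A)=\frac{e^{g(\sigma_A\tau_{A^c})}\mu_A^{\tau_{A^c}}(\sigma_A)}{\mu_A^{\tau_{A^c}}\bigl(e^{g(\cdot\,\tau_{A^c})}\bigr)}.
\]
Thus $\nu_A^{\tau_{A^c}}$ is an exponential tilt of $\mu_A^{\tau_{A^c}}$ on the finite set $[s]^A$, with tilt $h(\sigma_A)=g(\sigma_A\tau_{A^c})$. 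The measure $\mu_A^{\tau_{A^c}}$ is itself a Gibbs measure of the form \eqref{DefEqFactorMsr} on $[s]^A$: its factors are the restrictions of the $\phi$'s with the boundary fixed. The proof of Lemma~\ref{lem:tilt} also uses only the tilted form \eqref{TiltedForm}. Either way, Lemma~\ref{lem:tilt} gives
\[
\dKL(\nu_A^{\tau_{A^c}},\mu_A^{\tau_{A^c}})\le \osc^2(h)/8 .
\]

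\textbf{Step 2: localize the tilt.} Split $h=\sum_{\phi\in E[A]}g_\phi(\cdot\,\tau_{A^c})+\sum_{\phi\notin E[A]}g_\phi(\cdot\,\tau_{A^c})$. If $\phi\notin E[A]$, then $S[\phi]\cap A=\emptyset$, so $g_\phi$ depends only on coordinates in $A^c$. With $\tau_{A^c}$ fixed, these terms are therefore constant in $\sigma_A$ and do not change $\osc(h)$. Oscillation is subadditive, and restricting the domain can only decrease it. Hence
\[
\osc(h)\le\sum_{\phi\in E[A]}\osc(g_\phi)=2\sum_{\phi\in E[A]}\delta_\phi .
\]
Substituting into Step 1 gives
\[
\dKL(\nu_A^{\tau_{A^c}},\mu_A^{\tau_{A^c}})\le \frac{4}{8}\Bigl(\sum_{\phi\in E[A]}\delta_\phi\Bigr)^2=\frac12\Bigl(\sum_{\phi\in E[A]}\delta_\phi\Bigr)^2,
\]
which is the first inequality in \eqref{eq:localKL}.

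\textbf{Step 3: Cauchy--Schwarz.} The second inequality in \eqref{eq:localKL} is the Cauchy--Schwarz bound
\[
\Bigl(\sum_{\phi\in E[A]}\delta_\phi\Bigr)^2\le |E[A]|\sum_{\phi\in E[A]}\delta_\phi^2 .
\]

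\textbf{Main obstacle.} The only delicate point is justifying that Lemma~\ref{lem:tilt} applies to the conditional measure. This needs the identity in Step 1, which rests on the common normalizer $\mu(e^g)$ cancelling in the conditional ratio. It also needs the observation that the lemma's proof uses only the tilt structure, not the specific factor form. Everything else is routine.
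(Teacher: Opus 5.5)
Your proposal is correct and follows essentially the same route as the paper: write $\nu_A^{\tau_{A^c}}$ as an exponential tilt of $\mu_A^{\tau_{A^c}}$ on $[s]^A$, bound the oscillation of the tilt by $2\sum_{\phi\in E[A]}\delta_\phi$, apply Lemma~\ref{lem:tilt}, and finish with Cauchy--Schwarz. The only cosmetic difference is that the paper drops the factors outside $E[A]$ before forming the tilt, whereas you keep them and note that they are constant on the conditional fiber, so they contribute zero oscillation.
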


\begin{proof}
Fix $A \subseteq V$ and a boundary condition $\tau_{A^c}$. Both $\mu_A^{\tau_{A^c}}$ and $\nu_A^{\tau_{A^c}}$ are Gibbs measures on $[s]^A$. All terms that can depend on $\sigma_A$ are indexed by $\phi \in E[A]$; terms outside $E[A]$ are constant on the conditional fiber.

For any $\eta_A \in [s]^A$, let $g_{A}(\eta_A) = \sum_{\phi \in E[A]} g_\phi(\eta_A \, \tau_{A^c})$ collect the perturbation terms restricted to $A$.
It follows that
\[
\frac{\nu_A^{\tau_{A^c}}(\eta_A)}{\mu_A^{\tau_{A^c}}(\eta_A)}
= \frac{e^{g_{A}(\eta_A)}}{\mu_A^{\tau_{A^c}}(e^{g_{A}})}.
\]

By definition, $\osc(g_\phi)=2\delta_\phi$. Restricting $g_\phi$ to the conditional fiber can only decrease its oscillation. Hence,
\begin{align*}
\osc(g_{A})&= \sup_{\eta_A \in [s]^A} g_A(\eta_A)- \inf_{\eta_A \in [s]^A} g_A(\eta_A)\\
&=\sup_{\eta_A \in [s]^A} \sum_{\phi \in E[A]} g_\phi(\eta_A \, \tau_{A^c})- \inf_{\eta_A \in [s]^A} \sum_{\phi \in E[A]} g_\phi(\eta_A \, \tau_{A^c}) \\
&\leq \sum_{\phi \in E[A]} \left|\sup_{\eta_A \in [s]^A} g_\phi(\eta_A \, \tau_{A^c}) -\inf_{\eta_A \in [s]^A} g_\phi(\eta_A \, \tau_{A^c})\right| \le 2 \sum_{\phi \in E[A]} \delta_\phi.
\end{align*}
Applying Lemma~\ref{lem:tilt} yields
\[
\dKL\big(\nu_A^{\tau_{A^c}}, \mu_A^{\tau_{A^c}}\big)
\le \frac{1}{2} \Big(\sum_{\phi \in E[A]} \delta_\phi \Big)^2,
\]
which proves the first inequality in \eqref{eq:localKL}.

The second inequality follows from the Cauchy--Schwarz inequality:
\[
\Big(\sum_{\phi \in E[A]} \delta_\phi \Big)^2
\le |E[A]| \sum_{\phi \in E[A]} \delta_\phi^2.
\]
\end{proof}

The preceding local estimate and approximate tensorization yield the following main result.

\begin{theorem}\label{thm:main}
Let $\mu$, $\nu$ be Gibbs measures on a finite set $\Omega$, defined as in \eqref{DefEqFactorMsr} and \eqref{Perturbed-Measure}, with perturbation $g:\Omega\to\mathbb R$ admitting the decomposition \eqref{DecompositionOfG}. Suppose that Assumption~\ref{assume-approx-tensor} holds. Then
\begin{equation}\label{eq:main}
\dH^2(\nu,\mu)
\le
\dKL(\nu,\mu)
\le
\frac{C_{\mathrm{AT}}\Delta\kappa}{2}
\sum_{\phi\in\Phi}\delta_\phi^2,
\end{equation}
where $C_{\mathrm{AT}}$ is the constant in Assumption~\ref{assume-approx-tensor}.
\end{theorem}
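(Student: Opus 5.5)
The plan is to chain the existing tools: the Hellinger--KL comparison, the identification of entropy with KL (Lemma~\ref{lem:Ent-is-KL}), approximate tensorization (Assumption~\ref{assume-approx-tensor}), and the local KL bound (Lemma~\ref{lem:UpperBoundlocalKL}). The first inequality in \eqref{eq:main} is exactly \eqref{eq:H-vs-KL}, so only the KL bound needs work.

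Set $f=\nu/\mu$, which is well defined and bounded since, by \eqref{TiltedForm}, $\nu\ll\mu$ and $\Omega$ is finite. In particular $f\log^+f\in L^1(\mu)$ trivially. By Lemma~\ref{lem:Ent-is-KL}, $\dKL(\nu,\mu)=\Ent_V(f)$. Assumption~\ref{assume-approx-tensor} then gives $\dKL(\nu,\mu)\le C_{\mathrm{AT}}\sum_{x\in V}\mu(\Ent_{\{x\}}f)$. Applying \eqref{eq:blockKL} with $A=\{x\}$ rewrites each summand as $\E_{\nu_{\{x\}^c}}[\dKL(\nu_{\{x\}}^{\tau},\mu_{\{x\}}^{\tau})]$.

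Next I apply Lemma~\ref{lem:UpperBoundlocalKL} with $A=\{x\}$, using the second (Cauchy--Schwarz) form. Since $E[\{x\}]=E[x]$ and $|E[x]|\le\Delta$, this gives, uniformly in the boundary condition $\tau$,
\[
\dKL\bigl(\nu_{\{x\}}^{\tau},\mu_{\{x\}}^{\tau}\bigr)\le \frac{\Delta}{2}\sum_{\phi\in E[x]}\delta_\phi^2 .
\]
Because the bound is uniform in $\tau$, the expectation over $\nu_{\{x\}^c}$ is harmless.

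Finally I sum over $x$ and exchange the order of summation:
\[
\sum_{x\in V}\sum_{\phi\in E[x]}\delta_\phi^2=\sum_{\phi\in\Phi}|S[\phi]|\,\delta_\phi^2\le\kappa\sum_{\phi\in\Phi}\delta_\phi^2 .
\]
Combining the steps yields $\dKL(\nu,\mu)\le \tfrac{C_{\mathrm{AT}}\Delta\kappa}{2}\sum_{\phi\in\Phi}\delta_\phi^2$, which is \eqref{eq:main}.

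None of these steps is a real obstacle. The only points needing care are the double-counting step, where each factor is counted once for every vertex in its scope (giving $\kappa$), and the uniformity of the local bound in the boundary condition, which lets the expectation in \eqref{eq:blockKL} be dropped.
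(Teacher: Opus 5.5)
Your proposal is correct and follows the paper's proof essentially line for line. It uses the same chain: $\Ent_V(f)=\dKL(\nu,\mu)$, approximate tensorization, the singleton case of \eqref{eq:blockKL}, Lemma~\ref{lem:UpperBoundlocalKL} with $A=\{x\}$ and $|E[x]|\le\Delta$, and the double count $\sum_{x\in V}\sum_{\phi\in E[x]}\delta_\phi^2=\sum_{\phi\in\Phi}|S[\phi]|\delta_\phi^2\le\kappa\sum_{\phi\in\Phi}\delta_\phi^2$.
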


\begin{proof}
Let $f=\nu/\mu$. By Lemma~\ref{lem:Ent-is-KL} and Assumption~\ref{assume-approx-tensor},
\begin{align*}
\dKL(\nu,\mu)
&=
\Ent_V(f)\\
&\le
C_{\mathrm{AT}}
\sum_{x\in V}\mu\bigl(\Ent_{\{x\}}f\bigr)\\
&=
C_{\mathrm{AT}}
\sum_{x\in V}
\E_{\nu_{\{x\}^c}}
\left[
\dKL\left(
\nu_{\{x\}}^{\tau_{\{x\}^c}},
\mu_{\{x\}}^{\tau_{\{x\}^c}}
\right)
\right].
\end{align*}
Applying Lemma~\ref{lem:UpperBoundlocalKL} with $A=\{x\}$ and using $|E[x]|\le\Delta$ gives
\[
\dKL(\nu,\mu)
\le
\frac{C_{\mathrm{AT}}\Delta}{2}
\sum_{x\in V}\sum_{\phi\in E[x]}\delta_\phi^2.
\]
Each factor $\phi$ appears once for every $x\in S[\phi]$, and $|S[\phi]|\le\kappa$. Therefore
\[
\sum_{x\in V}\sum_{\phi\in E[x]}\delta_\phi^2
=
\sum_{\phi\in\Phi}|S[\phi]|\delta_\phi^2
\le
\kappa\sum_{\phi\in\Phi}\delta_\phi^2.
\]
This proves the KL bound in \eqref{eq:main}; the Hellinger bound follows from \eqref{eq:H-vs-KL}.
\end{proof}

\begin{corollary}\label{corollary:main}
For each $n\in\mathbb N$, let $G_n=(V_n=[n],\Phi_n,E_n)$ be a factor graph whose maximum degrees are bounded uniformly by $\Delta<\infty$ and $\kappa<\infty$. Let $\mu^{(n)}$, $\nu^{(n)}$ be the Gibbs measures on $\Omega_n=[s]^n$, defined as in \eqref{DefEqFactorMsr} and \eqref{Perturbed-Measure}, with perturbations $g^{(n)}$ admitting decompositions of the form \eqref{DecompositionOfG}. For $\phi\in\Phi_n$, write
\[
\delta_\phi^{(n)}
=
\frac12\osc\bigl(g_\phi^{(n)}\bigr).
\]
Suppose that Assumption~\ref{assume-approx-tensor} holds for $\mu^{(n)}$ with a constant $C_{\mathrm{AT}}$ independent of $n$. If
\begin{equation}\label{Eq:PerturbationL2Bound}
\sum_{\phi\in\Phi_n}\bigl(\delta_\phi^{(n)}\bigr)^2
\longrightarrow 0,
\end{equation}
then
\[
\dH\bigl(\nu^{(n)},\mu^{(n)}\bigr)
\longrightarrow 0.
\]
In particular, \eqref{Eq:PerturbationL2Bound} holds if
\[
\max_{\phi\in\Phi_n}\delta_\phi^{(n)}
=
o\bigl(n^{-1/2}\bigr).
\]
\end{corollary}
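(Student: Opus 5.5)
The plan is to apply Theorem~\ref{thm:main} to each $n$ separately and then check that the resulting constant does not depend on $n$. For each $n$, the hypotheses of Theorem~\ref{thm:main} hold for the pair $\mu^{(n)},\nu^{(n)}$. Both are Gibbs measures on the finite set $\Omega_n=[s]^n$, the perturbation $g^{(n)}$ decomposes as in \eqref{DecompositionOfG}, and Assumption~\ref{assume-approx-tensor} holds with constant $C_{\mathrm{AT}}$. Theorem~\ref{thm:main} therefore gives
\[
\dH^2\bigl(\nu^{(n)},\mu^{(n)}\bigr)
\le
\frac{C_{\mathrm{AT}}\Delta_n\kappa_n}{2}\sum_{\phi\in\Phi_n}\bigl(\delta_\phi^{(n)}\bigr)^2,
\]
where $\Delta_n,\kappa_n$ are the maximum degrees of $G_n$. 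By hypothesis $\Delta_n\le\Delta$ and $\kappa_n\le\kappa$, and $C_{\mathrm{AT}}$ does not depend on $n$. So the prefactor is bounded by the fixed constant $C_{\mathrm{AT}}\Delta\kappa/2$. Under \eqref{Eq:PerturbationL2Bound} the right-hand side then tends to $0$, and taking square roots gives $\dH(\nu^{(n)},\mu^{(n)})\to0$.

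For the final claim, I will bound the number of factors linearly in $n$. Every factor has a nonempty scope, so each $\phi\in\Phi_n$ lies in $E[x]$ for at least one $x\in V_n$. Hence
\[
|\Phi_n|\le\sum_{x\in V_n}|E[x]|\le \Delta n .
\]
This gives
\[
\sum_{\phi\in\Phi_n}\bigl(\delta_\phi^{(n)}\bigr)^2\le \Delta n\,\max_{\phi\in\Phi_n}\bigl(\delta_\phi^{(n)}\bigr)^2 .
\]
If $\max_\phi\delta_\phi^{(n)}=o(n^{-1/2})$, the right-hand side is $\Delta n\cdot o(n^{-1})=o(1)$, so \eqref{Eq:PerturbationL2Bound} holds.

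Nothing in this argument is hard. The only step needing care is the bound on the number of factors. It relies on scopes being nonempty (which the paper's definition guarantees) and on the uniform degree bound $\Delta$. Without these, $|\Phi_n|$ could grow faster than $n$, and the $o(n^{-1/2})$ condition would not be enough.
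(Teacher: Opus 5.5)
Your proposal is correct and matches the paper's proof: the first assertion follows from Theorem~\ref{thm:main} because the prefactor $C_{\mathrm{AT}}\Delta\kappa/2$ does not depend on $n$. The final assertion follows, as you argue, from $|\Phi_n|\le n\Delta$, which uses that every declared scope is nonempty.
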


\begin{proof}
The first assertion follows directly from Theorem~\ref{thm:main}. For the final assertion, every declared scope is nonempty, so every factor in $\Phi_n$ belongs to $E[x]$ for at least one $x\in V_n$. Hence $|\Phi_n|\le n\Delta$, and
\[
\sum_{\phi\in\Phi_n}\bigl(\delta_\phi^{(n)}\bigr)^2
\le
n\Delta
\left(
\max_{\phi\in\Phi_n}\delta_\phi^{(n)}
\right)^2
=
o(1).
\]
\end{proof}

\subsection{\purple{Perturbing the Updates}}\label{subsec:approx-block-gibbs}

We now consider approximate block Gibbs updates directly.
\purple{In contrast to Theorem~\ref{thm:main}, the result in this subsection does not assume that the perturbed invariant distribution is itself a Gibbs measure of the form \eqref{Perturbed-Measure}. The ideal measure \(\mu\) need only satisfy the scan-specific factorization assumption below, while the perturbed kernel must be a random-scan block kernel whose updates approximate the corresponding exact conditionals.}
Let \(\mathcal A\subseteq 2^V\) be a family of blocks, and let
\(p=\{p_A\}_{A\in\mathcal A}\) be nonnegative scan probabilities satisfying
\[
    \sum_{A\in\mathcal A}p_A=1,
    \qquad
    \gamma(p):=
    \min_{x\in V}
    \sum_{A\in\mathcal A:\,x\in A}p_A
    >0 .
\]

\begin{assumption}[Scan-specific block factorization]\label{assume-scan-specific-factor}
There exists a constant $C_p<\infty$ such that, for every function $f:\Omega\to\mathbb R_+$ with $f\log^+f\in L^1(\mu)$,
\begin{equation}\label{Ineq-ScanSpecificFactor}
\gamma(p)\Ent_V(f)
\le
C_p
\sum_{A\in\mathcal A}p_A\,\mu\bigl(\Ent_Af\bigr).
\end{equation}
\end{assumption}
Assumption~\ref{assume-block-factor} implies Assumption~\ref{assume-scan-specific-factor} with $C_p=C_{\mathrm{BF}}$, but Theorem~\ref{thm:approx-block-gibbs} only needs \eqref{Ineq-ScanSpecificFactor} for the particular scan $p$.

At each step, the sampler chooses block \(A\in\mathcal A\) with probability \(p_A\).  Given the current configuration \(\tau\), the exact block Gibbs update would keep \(\tau_{A^c}\) fixed and resample \(\tau_A\in [s]^A\) from \(\mu_A^{\tau_{A^c}}\). We instead allow an approximate conditional distribution \(q_A^{\tau_{A^c}}\) on \([s]^A\).  Define
\begin{equation*}\label{eq:approx-block-component}
    \widetilde K_A(\tau,\sigma)
    =
    \mathbf 1\{\sigma_{A^c}=\tau_{A^c}\}
    q_A^{\tau_{A^c}}(\sigma_A),
    \qquad \tau,\sigma \in\Omega,
\end{equation*}
and set
\begin{equation*}\label{eq:approx-block-kernel}
    \widetilde K
    =
    \sum_{A\in\mathcal A}p_A\widetilde K_A.
\end{equation*}
Let \(\widetilde\mu\) be any invariant distribution of \(\widetilde K\).

\begin{theorem}\label{thm:approx-block-gibbs}
Suppose that Assumption~\ref{assume-scan-specific-factor} holds. Then
\begin{equation}\label{eq:approx-block-gibbs}
\dH^2(\widetilde\mu,\mu)
\le
\dKL(\widetilde\mu,\mu)
\le
\frac{C_p}{\gamma(p)}
\sum_{A\in\mathcal A}p_A
\E_{\widetilde\mu_{A^c}}
\left[
\dKL\left(q_A^{\tau_{A^c}},\mu_A^{\tau_{A^c}}\right)
\right],
\end{equation}
where \(C_p\) is the constant in Assumption~\ref{assume-scan-specific-factor}.
\end{theorem}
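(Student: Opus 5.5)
The plan is to combine the scan-specific factorization with a convexity argument that uses only the invariance of $\widetilde\mu$. I would not compare $\widetilde\mu_A^{\tau_{A^c}}$ with $q_A^{\tau_{A^c}}$ pointwise; the comparison will hold only after averaging over blocks with weights $p_A$.

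\emph{Step 1.} Since $\mu$ has full support on the finite set $\Omega$, we have $\widetilde\mu\ll\mu$, and every KL divergence below is finite. I would take $f=\widetilde\mu/\mu$ in Assumption~\ref{assume-scan-specific-factor}. Exactly as in Corollary~\ref{Corollary-H2-BlockKL}, Lemma~\ref{lem:Ent-is-KL} then gives
\[
\dKL(\widetilde\mu,\mu)\le \frac{C_p}{\gamma(p)}\sum_{A\in\mathcal A}p_A\,\E_{\widetilde\mu_{A^c}}\Bigl[\dKL\bigl(\widetilde\mu_A^{\tau_{A^c}},\mu_A^{\tau_{A^c}}\bigr)\Bigr].
\]
The Hellinger bound then follows from \eqref{eq:H-vs-KL}. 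The remaining task is to show
\[
\sum_A p_A\,\E_{\widetilde\mu_{A^c}}\bigl[\dKL(\widetilde\mu_A^{\tau_{A^c}},\mu_A^{\tau_{A^c}})\bigr]\le \sum_A p_A\,\E_{\widetilde\mu_{A^c}}\bigl[\dKL(q_A^{\tau_{A^c}},\mu_A^{\tau_{A^c}})\bigr].
\]

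\emph{Step 2 (chain rule).} For each block $A$, define $\nu_A=\widetilde\mu\widetilde K_A$. By construction, $\nu_A$ has $A^c$-marginal $\widetilde\mu_{A^c}$ and conditional law $q_A^{\tau_{A^c}}$ on $A$. The chain rule for KL divergence, applied to $\nu_A$ and to $\widetilde\mu$, gives
\begin{align*}
\dKL(\nu_A,\mu)&=\dKL(\widetilde\mu_{A^c},\mu_{A^c})+\E_{\widetilde\mu_{A^c}}\bigl[\dKL(q_A^{\tau_{A^c}},\mu_A^{\tau_{A^c}})\bigr],\\
\dKL(\widetilde\mu,\mu)&=\dKL(\widetilde\mu_{A^c},\mu_{A^c})+\E_{\widetilde\mu_{A^c}}\bigl[\dKL(\widetilde\mu_A^{\tau_{A^c}},\mu_A^{\tau_{A^c}})\bigr].
\end{align*}

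\emph{Step 3 (invariance and convexity).} Invariance gives $\widetilde\mu=\widetilde\mu\widetilde K=\sum_A p_A\nu_A$. Joint convexity of KL, applied in its first argument with $\mu$ fixed, then yields
\[
\dKL(\widetilde\mu,\mu)\le\sum_A p_A\,\dKL(\nu_A,\mu).
\]
Substitute the two chain-rule identities from Step 2, using $\sum_A p_A=1$ to write $\dKL(\widetilde\mu,\mu)=\sum_A p_A\,\dKL(\widetilde\mu,\mu)$. The marginal terms $\dKL(\widetilde\mu_{A^c},\mu_{A^c})$ cancel block by block, which leaves exactly the averaged inequality required at the end of Step 1. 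Inserting it into Step 1 proves \eqref{eq:approx-block-gibbs}.

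The main obstacle is recognizing that there is no pointwise comparison available. The conditionals of $\widetilde\mu$ need not be close to $q_A^{\tau_{A^c}}$ block by block, so the comparison has to come from stationarity combined with convexity. The chain-rule cancellation is what turns a single global convexity inequality into the block-weighted comparison. Everything else consists of routine finiteness checks, which are automatic here because $\Omega$ is finite and $\mu$ has full support.
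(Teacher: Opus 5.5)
Your proposal is correct and is essentially the paper's proof. It uses the same ingredients: invariance plus convexity of KL in the first argument, the chain rule for $\widetilde\mu\widetilde K_A$ and $\widetilde\mu$, cancellation of the common marginal terms, and finally the scan-specific factorization applied to $f=\widetilde\mu/\mu$ via Lemma~\ref{lem:Ent-is-KL}. The only difference is that you invoke the factorization step first rather than last.
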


\begin{proof}
Since \(\mu\) has full support, \(\widetilde\mu\ll\mu\). By invariance of
\(\widetilde\mu\) and convexity of KL divergence in its first argument,
\begin{equation}\label{eq:approx-block-convexity}
\dKL(\widetilde\mu,\mu)
=
\dKL\left(
\sum_{A\in\mathcal A}p_A\widetilde\mu\widetilde K_A,
\mu
\right)
\le
\sum_{A\in\mathcal A}p_A
\dKL(\widetilde\mu\widetilde K_A,\mu).
\end{equation}
For a fixed block \(A\), the measure \(\widetilde\mu\widetilde K_A\) has
\(A^c\)-marginal \(\widetilde\mu_{A^c}\) and conditional distribution
\(q_A^{\tau_{A^c}}\) on \(A\). Indeed,
\begin{align*}
\widetilde{\mu} \widetilde{K}_A(\sigma) &= \sum_{\tau} \widetilde{\mu}(\tau) \widetilde{K}_A(\tau, \sigma) \\
&= \sum_{\tau_A, \tau_{A^c}} \widetilde{\mu}(\tau_A\tau_{A^c})\mathbf 1\{\sigma_{A^c}=\tau_{A^c}\} q_A^{\tau_{A^c}}(\sigma_A)  \\
&= \sum_{\tau_A} \widetilde{\mu}(\tau_A\sigma_{A^c}) q_A^{\tau_{A^c}}(\sigma_A) = \widetilde{\mu}_{A^c}(\sigma_{A^c})q_A^{\tau_{A^c}}(\sigma_A).
\end{align*}
The chain rule for KL divergence therefore gives
\begin{equation}\label{eq:approx-block-after-update}
\dKL(\widetilde\mu\widetilde K_A,\mu)
=
\dKL(\widetilde\mu_{A^c},\mu_{A^c})
+
\E_{\widetilde\mu_{A^c}}
\left[
\dKL\left(q_A^{\tau_{A^c}},\mu_A^{\tau_{A^c}}\right)
\right].
\end{equation}
The same chain rule applied to \(\widetilde\mu\) gives
\begin{equation}\label{eq:approx-block-before-update}
\dKL(\widetilde\mu,\mu)
=
\dKL(\widetilde\mu_{A^c},\mu_{A^c})
+
\E_{\widetilde\mu_{A^c}}
\left[
\dKL\left(\widetilde\mu_A^{\tau_{A^c}},\mu_A^{\tau_{A^c}}\right)
\right].
\end{equation}
Substituting \eqref{eq:approx-block-after-update} and
\eqref{eq:approx-block-before-update}, multiplied by \(\sum_A p_A=1\), into \eqref{eq:approx-block-convexity}, and canceling the common marginal terms yields
\begin{align*}
\sum_{A\in\mathcal A}p_A
\E_{\widetilde\mu_{A^c}}
\left[
\dKL\left(\widetilde\mu_A^{\tau_{A^c}},\mu_A^{\tau_{A^c}}\right)
\right]\le
\sum_{A\in\mathcal A} p_A
\E_{\widetilde\mu_{A^c}}
\left[
\dKL\left(q_A^{\tau_{A^c}},\mu_A^{\tau_{A^c}}\right)
\right].
\end{align*}
Apply Assumption~\ref{assume-scan-specific-factor} to $f=\widetilde\mu/\mu$ and use Lemma~\ref{lem:Ent-is-KL}. The left-hand side of \eqref{Ineq-ScanSpecificFactor} is $\gamma(p)\dKL(\widetilde\mu,\mu)$, while its right-hand side is
\[
C_p
\sum_{A\in\mathcal A}p_A
\E_{\widetilde\mu_{A^c}}
\left[
\dKL\left(\widetilde\mu_A^{\tau_{A^c}},\mu_A^{\tau_{A^c}}\right)
\right].
\]
The preceding inequality therefore proves the KL bound in \eqref{eq:approx-block-gibbs}; the Hellinger bound follows from \eqref{eq:H-vs-KL}.
\end{proof}

In particular, if \purple{numbers \(\varepsilon_A\ge0\), \(A\in\mathcal A\), satisfy}
\[
    \sup_{\tau_{A^c}\in[s]^{A^c}}
    \dKL\left(q_A^{\tau_{A^c}},\mu_A^{\tau_{A^c}}\right)
    \le \purple{\varepsilon_A^2},
    \qquad A\in\mathcal A,
\]
then
\begin{equation}\label{eq:approx-block-uniform}
    \dH^2(\widetilde\mu,\mu)
    \le
    \frac{C_p}{\gamma(p)}
    \sum_{A\in\mathcal A}p_A\purple{\varepsilon_A^2}.
\end{equation}

Importantly, the bound does not require the family of approximate conditionals $\{q_A^{\tau_{A^c}}\}_{A\in \mathcal{A}}$ to be compatible with any joint distribution.
Random-scan chains built from incompatible full conditional distributions are
sometimes called pseudo-Gibbs samplers; see \cite{TakabatakeAkaho2026} for a recent analysis.

Theorem~\ref{thm:main} treats perturbations of the stationary measure, and Theorem~\ref{thm:approx-block-gibbs} treats perturbations of the local updates. Section~\ref{SecWorkedExamples} uses both results and then gives a matching argument outside their common setting.

\section{Worked Examples}\label{SecWorkedExamples}

{\color{purple}
We organize the worked examples in three steps: first we give a class of Markov chains with no relationship to statistics, then statistical procedures that belong to the class, and finally consequences of our results for tuning of those statistical procedures. 

Section~\ref{subsec:Example1} treats high-temperature Ising models on a square box. It gives a class where the block-factorization results apply and compares the present argument with \cite{lin2025perturbation}. This example is pedagogical, and has no statistical content.

Section~\ref{subsec:Example2} then treats an Ising image posterior, a model used in image restoration since at least \cite{GemanGeman1984}. We analyze an augmented-state sampler and a simpler noisy sampler, and then specialize both bounds to a control-variate subsampling construction, with related approaches in \cite{HugginsAdamsBroderick2017,QuirozKohnVillaniTran2019}.

Section~\ref{subsec:Example3} follows the same order for weighted matchings. We first define exact and noisy single-edge Gibbs chains, then explain how they arise in Bayesian record linkage \cite{Sadinle2017Bayesian,quinn2023exploring,linacre2022splink}, and finally compare the resulting stationary bound with the direct one-step total-variation bound.
}

\subsection{The Ising Model on a Square Box}\label{subsec:Example1}

We begin by analyzing a perturbation of a simple Ising model on a square box.

\subsubsection{Notation}
We consider a growing sequence of grids indexed by their side length $L\in\mathbb N_+$. For fixed $L$, let
$G_L=(V_L,E_L)$
be the usual $L\times L$ box in $\mathbb Z^2$, with vertex set $V_L=[L]^2$ and nearest-neighbor edge set $E_L$; we drop the $L$ from the subscripts when not considering sequences of graphs.
Given coupling constants $J:E_L\to [0,\infty)$,
inverse temperature $\beta>0$, and external field
$h:V_L\to\mathbb R$,
we define the Ising Gibbs measure on
$\Omega=\{-1,+1\}^{V_L}$
by
\begin{equation} \label{EqDefIsingGibbs}
\mu(\sigma)
\propto
\exp\Bigg(
\beta\sum_{\{u,v\}\in E_L}J_{uv}\, \sigma(u)\,\sigma(v)
+
\sum_{v\in V_{L}} h_v\, \sigma(v)
\Bigg).
\end{equation}

In the notation of Section~\ref{Sec-Preliminaries}, the associated factor graph satisfies
\[
\Delta \leq 5,
\quad
\kappa=2,
\quad
|V_L|=L^2,
\quad
|E_L|=2L(L-1),
\quad
\Phi=\Phi_1 \cup \Phi_2,
\]
where
\[
\Phi_1=\{\phi_{uv}(\sigma)=\beta J_{uv}\,\sigma(u)\sigma(v):\, \{u,v\} \in E_L\}, \quad \Phi_2=\{\phi_v(\sigma)=h_v\,\sigma(v):\, v\in V_L\},
\]
with declared scopes $S[\phi_{uv}]=\{u,v\}$ and $S[\phi_v]=\{v\}$.

Let $\nu$ be another Ising Gibbs measure on the same graph with \purple{inverse temperature \(\beta'>0\) and external field \(h':V_L\to\mathbb R\)}. Then $\nu$ can be written as a perturbation of $\mu$ of the form
\[
\nu(\sigma)
\propto
\exp\Bigg(
\sum_{\phi\in\Phi}\phi(\sigma)+ g(\sigma)\Bigg),
\]
where the perturbation admits the local decomposition
\[
g(\sigma)
=
\sum_{\phi\in\Phi} g_\phi(\sigma),
\]
with
\[
g_{\phi}(\sigma)=
\begin{cases}
(\beta'-\beta)\,J_{uv}\,\sigma(u)\sigma(v),
& \phi=\phi_{uv}\in\Phi_1,
\\[0.3em]
(h'_v-h_v)\,\sigma(v),
& \phi=\phi_v\in\Phi_2.
\end{cases}
\]
Define the perturbation sizes by
\[
\delta_{\phi_{uv}}
=
\frac12\osc(g_{\phi_{uv}})
=
|\beta'-\beta|J_{uv},
\qquad \{u,v\} \in E_L,
\]
and
\[
\delta_{\phi_v}
=
\frac12\osc(g_{\phi_v})
=
|h_v'-h_v|,
\qquad v\in V_L.
\]

\subsubsection{Main Bound}
For neighboring vertices $\{u,v\} \in E_L$, define the Dobrushin influence coefficients
\[
c_{uv}
=
\sup_{\tau,\tau'}
\dTV\left(
\mu_{v}^{\tau_{V_L\setminus \{v\}}},
\mu_{v}^{\tau'_{V_L\setminus \{v\}}}
\right),
\]
where the supremum is taken over boundary conditions $\tau,\tau'$ differing only at the vertex $u$; that is, $\tau(y)=\tau'(y)$ for $y\neq u$.
The following is a standard estimate for the Dobrushin coefficients of the Ising model; we give the proof for completeness:

\begin{lemma}\label{lem:dobrushin-ising}
For the nearest-neighbor Ising model \eqref{EqDefIsingGibbs},
\[
c_{uv}\le \tanh(\beta J_{uv}), \qquad \{u,v\}\in E_L.
\]
\end{lemma}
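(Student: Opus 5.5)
The plan is a direct computation of the single-site conditional law, followed by a monotonicity argument in the field felt by $v$. Fix $\{u,v\}\in E_L$ and a boundary configuration $\tau$ on $V_L\setminus\{v\}$. By \eqref{ConditionlProbability}, all factors not containing $v$ cancel. Hence $\mu_v^{\tau}$ is the law on $\{-1,+1\}$ with
\[
\mu_v^{\tau}(+1)=\frac{e^{H}}{e^{H}+e^{-H}}=\frac{1+\tanh H}{2},
\qquad
H=H(\tau)=h_v+\beta\sum_{w\sim v}J_{vw}\tau(w).
\]
For two measures on a two-point space, the total variation distance is the difference of the probabilities of $+1$. So for $\tau,\tau'$ differing only at $u$,
\[
\dTV\bigl(\mu_v^{\tau},\mu_v^{\tau'}\bigr)=\tfrac12\bigl|\tanh H(\tau)-\tanh H(\tau')\bigr|.
\]

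The fields differ only through the $u$ term. Writing $a=h_v+\beta\sum_{w\sim v,\,w\neq u}J_{vw}\tau(w)$ and $b=\beta J_{uv}\ge 0$, we have $\{H(\tau),H(\tau')\}=\{a+b,a-b\}$. It therefore suffices to show that
\[
\sup_{a\in\mathbb R}\tfrac12\bigl(\tanh(a+b)-\tanh(a-b)\bigr)\le \tanh b .
\]
I would prove this with the addition formula. Setting $x=\tanh a$ and $t=\tanh b$ gives
\[
\tanh(a+b)-\tanh(a-b)=\frac{x+t}{1+xt}-\frac{x-t}{1-xt}=\frac{2t(1-x^2)}{1-x^2t^2}.
\]
Since $x^2<1$ and $t^2<1$, we have $1-x^2\le 1-x^2t^2$. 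Half of the difference is thus at most $t=\tanh(\beta J_{uv})$, with equality approached at $a=0$. Taking the supremum over $\tau,\tau'$ gives $c_{uv}\le\tanh(\beta J_{uv})$.

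The only real step is the elementary inequality in $a$. Everything else is bookkeeping of which factors survive conditioning.
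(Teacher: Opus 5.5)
Your proof is correct and follows the paper's route: compute the single-site conditional law at $v$, note that $\tau$ and $\tau'$ change the field only through the term $\pm\beta J_{uv}$, and maximize over the remaining field. Your identity $\tanh(a+b)-\tanh(a-b)=2t(1-x^2)/(1-x^2t^2)$ in fact proves the step the paper only asserts, namely that the worst case occurs when the remaining field is zero.
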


\begin{proof}
Conditional on the boundary configuration outside $v$, the single-site conditional distribution at $v$ is
\[
\mu_v^{\tau_{V_L\setminus\{v\}}}(\sigma(v)=s)
=
\frac{
\exp\!\Big(
s\big(
h_v+\beta\sum_{w:\,\{w,v\}\in E_L}J_{vw}\tau(w)
\big)
\Big)
}{
2\cosh\!\Big(
h_v+\beta\sum_{w:\,\{w,v\}\in E_L}J_{vw}\tau(w)
\Big)
},
\qquad s\in\{-1,+1\}.
\]
Since $\tau$ and $\tau'$ differ only at $u$, the two conditional distributions differ only through the term $\beta J_{uv}\tau(u)$. The total variation distance is bounded by its maximum over the remaining effective field, which is attained when that field is zero. Therefore,
\[
\dTV
\left(
\mu_v^{\tau_{V_L\setminus\{v\}}},
\mu_v^{\tau'_{V_L\setminus\{v\}}}
\right)
\leq
\tanh(\beta J_{uv}).
\]
Taking the supremum over \(\tau,\tau'\) completes the proof.
\end{proof}

{\color{purple}
For the remainder of this subsection, set
\begin{equation*}\label{eq:ising-uniform-dobrushin-q}
q_L
=
\sup_{v\in V_L}
\sum_{u:\,\{u,v\}\in E_L}\tanh(\beta J_{uv}).
\end{equation*}
Lemma~\ref{lem:dobrushin-ising} shows that the actual Dobrushin row sum is at most \(q_L\).

\begin{lemma}\label{lem:ising-dobrushin-block-factorization}
Fix $L \in \mathbb{N}_+$.
Let \(G_L=(V_L,E_L)\) be a finite nearest-neighbor subgraph of the $L \times L$ square lattice  with maximum degree at most \(D\), and let \(\mu\) be an Ising measure with couplings \(J_{uv}\ge0\) and arbitrary external field. If $q_L<1$,
then \(\mu\) satisfies Assumptions~\ref{assume-SSM} and~\ref{assume-block-factor}, with constants depending only on \(D\) and \(q_L\).
\end{lemma}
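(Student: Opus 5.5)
The plan has two steps. First, derive Assumption~\ref{assume-SSM} from the Dobrushin uniqueness condition $q_L<1$ via the standard Dobrushin comparison theorem. Then feed Assumption~\ref{assume-SSM} into \cite[Theorem~2.3 and Section~2.2.4]{caputo2021block} to obtain Assumption~\ref{assume-block-factor}. Lemma~\ref{lem:dobrushin-ising} already bounds the influence matrix $C=(c_{uv})$ entrywise by $\tanh(\beta J_{uv})$. Hence every row sum of $C$ is at most $q_L<1$. This bound is uniform in the external field and in any boundary condition, since pinning sites outside a set $B$ only deletes rows and columns of $C$.

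\textbf{SSM in TV form.} Fix $A\subset B$, a site $x\in\partial B$, and boundary conditions $\tau,\tau'$ that differ only at $x$. Both conditional measures $\mu_B^{\tau_{B^c}}$ and $\mu_B^{\tau'_{B^c}}$ are Ising measures on $B$. Their single-site specifications differ only at the neighbours $v$ of $x$, and there by at most $\tanh(\beta J_{xv})$. Dobrushin's comparison theorem then gives, for each $y\in B$,
\[
\dTV\bigl(\mu_{y}^{\tau},\mu_y^{\tau'}\bigr)\le \sum_{v\sim x}\bigl(\textstyle\sum_{k\ge0}C^k\bigr)_{yv}\tanh(\beta J_{xv}).
\]
Because $C$ is supported on nearest-neighbour pairs, $(C^k)_{yv}=0$ whenever $k<d(y,v)$. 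Therefore
\[
\sum_{k\ge d(y,v)}(C^k)_{yv}\le \frac{q_L^{d(y,v)}}{1-q_L},
\]
which decays exponentially with rate $m=\log(1/q_L)$. The union bound over $y\in A$ is not enough here. We need the full marginal on $A$, and in ratio form. I would control it by coupling the two chains site by site, which bounds the probability of disagreement anywhere in $A$. Summing over $y\in A$ at distance $r\ge d(x,A)$ costs a factor of at most $D^{r}$ times a polynomial in $r$. To absorb it, I would first pass to a power $C^\ell$ of the influence matrix, or use a weighted-norm version of Dobrushin's condition with weights $e^{\lambda d(\cdot,x)}$ for small $\lambda>0$. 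This yields a decay constant depending only on $D$ and $q_L$.

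\textbf{Upgrading TV to the ratio bound.} This is the step I expect to be the main obstacle, because Assumption~\ref{assume-SSM} demands a sup-norm bound on $\mu_A^{\tau}/\mu_A^{\tau'}-1$ uniformly in $|A|$. My approach uses the sites $y\in A$ nearest to $x$ together with a telescoping or conditioning argument. Write
\[
\frac{\mu_A^{\tau}(\sigma_A)}{\mu_A^{\tau'}(\sigma_A)}
=\frac{\mu_{B\setminus A}^{\sigma_A\tau}\text{-partition function ratio}}{\cdots}
=\mu_{B\setminus A}^{\sigma_A\tau'}\bigl(e^{\beta\sum_{v\sim x}J_{xv}(\tau(x)-\tau'(x))\sigma(v)}\bigr)\Big/\mu_{B\setminus A}^{\tau'}(\text{same}),
\]
which is a ratio of expectations of a bounded local observable at $x$ under two measures on $B\setminus A$. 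These measures differ only through the pinning $\sigma_A$ on $A$. Applying the Dobrushin comparison once more, now with the disagreement on $A$ and the observable near $x$, controls the ratio by $1+O(e^{-m d(x,A)})$. The key point is that the sum over the sources in $A$ is weighted by $q_L^{d(a,x)}$, and after the weighted-norm trick this sum is again summable uniformly in $|A|$. The resulting constants $M,m$ depend only on $D$ and $q_L$.

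\textbf{Block factorization.} Having established Assumption~\ref{assume-SSM} with such constants, on subgraphs of $\mathbb Z^2$ of bounded degree, I would invoke \cite[Theorem~2.3 and Section~2.2.4]{caputo2021block}, as in the earlier remark. This gives Assumption~\ref{assume-block-factor} with $C_{\mathrm{BF}}$ depending only on $D$, $M$ and $m$, and hence only on $D$ and $q_L$. Finally, I would check that the hypotheses there (finite-range spin system, uniform ellipticity, SSM for all finite subsets) hold uniformly in the external field.
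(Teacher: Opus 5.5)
Your proposal is correct and follows essentially the paper's route. The paper also writes the ratio as $\rho(F\mid\sigma_A=\eta)/\rho(F)$, with $F$ the boundary tilt at $x$ and $\rho=\mu_B^{\tau'_{B^c}}$, controls the law of $\sigma$ on the neighbours of $x$ by a Dobrushin comparison bound (Corollary~2.6 of Rebeschini--van Handel) with the disagreement placed on $A$, and then applies Caputo--Parisi after extending couplings and fields by zero to all of $\mathbb Z^2$. Your TV-form step and weighted-norm trick are unnecessary: the sum over sources is controlled by row sums, $\sum_{z\in A}(C^n)_{yz}\le q_L^n$ with only $n\ge d(x,A)-1$ contributing, which gives $m=-\log q_L$ directly, while the range of $F$ depends only on $D$ and $q_L$ because $\beta J_{xy}\le\operatorname{arctanh}(q_L)$.
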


\begin{proof}
Set \(C_{uv}=\tanh(\beta J_{uv})\) when \(\{u,v\}\in E\), and set \(C_{uv}=0\) otherwise. The row sums of \(C\) are at most \(q_L\), and the same matrix bounds the single-site influences after any collection of spins has been fixed. If \(q_L=0\), all interactions vanish and the assertion is immediate, so assume \(0<q_L<1\).

Fix \(A\subset B\subset V\), \(x\in\partial B\), and boundary conditions \(\tau,\tau'\) that differ only at \(x\). Write \(d=d(x,A)\), let \(S\) be the set of neighbors of \(x\) in \(B\), and let \(\rho=\mu_B^{\tau'_{B^c}}\). The change at \(x\) from $\tau'(x)$ to $\tau(x)$ is the tilt
\[
F(\sigma_S)
=
\exp\left\{
\beta\bigl(\tau(x)-\tau'(x)\bigr)
\sum_{y\in S}J_{xy}\sigma(y)
\right\}.
\]
Consequently, 
\[
\mu_B^{\tau_{B^c}}(\sigma_B)
=
\frac{F(\sigma_S)\rho(\sigma_B)}{\rho(F)}.
\]
For every \(\eta\in\{-1,+1\}^A\), taking the marginal on $A$ and conditioning under $\rho$ on
$\{\sigma_A=\eta\}$ therefore gives
\begin{align*}
\mu_A^{\tau_{B^c}}(\eta)
&= \sum_{\sigma_B : \sigma_A=\eta} \mu_B^{\tau_{B^c}}(\eta)\\
&= \frac{1}{\rho(F)} \sum_{\sigma_B : \sigma_A=\eta} F(\sigma_S)\rho(\sigma_B)\\
&= \frac{1}{\rho(F)} \, \rho(F \mathbf{1}_{\{\sigma_A=\eta\}})\\
&= \frac{1}{\rho(F)} \mu_A^{\tau'_{B^c}}(\eta)\, \rho(F\mid \sigma_A=\eta),
\end{align*}
and hence
\begin{equation}\label{eq:ising-ratio-via-boundary-tilt}
\frac{\mu_A^{\tau_{B^c}}(\eta)}{\mu_A^{\tau'_{B^c}}(\eta)}
=
\frac{\rho(F\mid\sigma_A=\eta)}{\rho(F)}.
\end{equation}
Since \(C_{xy}\le q_L\), each \(\beta J_{xy}\le\operatorname{arctanh}(q_L)\), and hence
\begin{equation}\label{eq:ising-boundary-tilt-range}
\frac{\sup F}{\inf F}
\le
\exp\{4D\operatorname{arctanh}(q_L)\}
=:
R_{D,q_L}.
\end{equation}
This already gives the required bound when \(d=1\).

Suppose now that \(d\ge2\). For configurations \(\eta,\eta'\) on \(A\), let \(\rho_S^\eta\) and \(\rho_S^{\eta'}\) be the conditional laws on \(S\). Corollary~2.6 of \cite{RebeschiniVanHandel2014Comparison}, applied after telescoping the boundary changes on \(A\), gives
\begin{align*}
\dTV(\rho_S^\eta,\rho_S^{\eta'})
&\le
\sum_{y\in S}\sum_{z\in A}\sum_{n\ge1}(C^n)_{yz}\\
&\le
D\sum_{n\ge d-1}q_L^n
\le
\frac{Dq_L^{d-1}}{1-q_L}.
\end{align*}
Indeed, a nonzero term \((C^n)_{yz}\) requires a path of length \(n\) from \(y\) to \(z\), and every such path has length at least \(d-1\). Since \(\rho(F)\) is a convex combination of the quantities \(\rho(F\mid\sigma_A=\eta')\), Equations~\eqref{eq:ising-ratio-via-boundary-tilt}--\eqref{eq:ising-boundary-tilt-range} imply
\[
\left|
\frac{\mu_A^{\tau_{B^c}}(\eta)}{\mu_A^{\tau'_{B^c}}(\eta)}-1
\right|
\le
(R_{D,q_L}-1)\frac{Dq_L^{d-1}}{1-q_L}.
\]
Taking \(m=-\log q_L\) and increasing \(M_{D,q_L}\) to cover the case \(d=1\), this is Assumption~\ref{assume-SSM}, with constants depending only on \(D\) and \(q_L\).

Finally, extend the couplings and fields to \(\mathbb Z^2\) by setting them equal to zero off \(G_L\). The same estimate holds uniformly for every finite region of this spatially non-homogeneous system. Theorem~2.3 and the extension in Section~2.2.4 of \cite{caputo2021block} therefore give Assumption~\ref{assume-block-factor}, with a constant depending only on \(D\) and \(q_L\).
\end{proof}
}

In particular, in the homogeneous case \(J_{uv}\equiv 1\), we have
\[ q_L\le 4\tanh(\beta). \]
Hence, whenever \(4\tanh(\beta)<1\), the uniform Dobrushin condition \(q_L\le q<1\) holds for any choice of \(q\in [4\tanh(\beta),1)\).

\begin{theorem}\label{thm:torus-ising}
Assume there exists $q<1$ such that
\begin{equation}\label{eq:torus-dobrushin-uniform}
q_L\le q
\end{equation}
for all $L$. Then there is a constant $C_{q}<\infty$, independent of $L$, such that
\begin{equation}\label{eq:torus-ising-bound}
\dH^2(\nu,\mu)
\le
C_{q}
\left(
\sum_{\{u,v\}\in E_L}|\beta'-\beta|^2J_{uv}^2
+
\sum_{v\in V_L}|h_v'-h_v|^2
\right).
\end{equation}

Furthermore, if
\begin{equation}\label{eq:torus-ising-l2-perturbation}
\sum_{\{u,v\}\in E_L}|\beta'-\beta|^2J_{uv}^2
+
\sum_{v\in V_L}|h_v'-h_v|^2
\longrightarrow 0,
\end{equation}
then
\[
\dH(\nu,\mu)\to0
\qquad\text{as }L\to\infty.
\]
In particular, \eqref{eq:torus-ising-l2-perturbation} holds if
\begin{equation}\label{eq:torus-ising-small-perturbation}
\left(\max_{v\in V_L}|h_v'-h_v|\right)
\vee
\left(\sqrt{2}|\beta'-\beta|\max_{\{u,v\}\in E_L}J_{uv}\right)
=
o(L^{-1}).
\end{equation}
\end{theorem}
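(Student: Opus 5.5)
The plan is to derive Theorem~\ref{thm:torus-ising} as a direct specialization of Theorem~\ref{thm:main}, with Lemma~\ref{lem:ising-dobrushin-block-factorization} supplying the functional inequality uniformly in $L$.

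First, the hypothesis. The box $G_L$ has maximum degree $D=4$. By \eqref{eq:torus-dobrushin-uniform}, $q_L\le q<1$ for every $L$. Lemma~\ref{lem:ising-dobrushin-block-factorization} then gives Assumption~\ref{assume-block-factor} with a constant $C_{\mathrm{BF}}$ depending only on $D$ and $q_L$. Taking singleton blocks with unit weights yields Assumption~\ref{assume-approx-tensor} with $C_{\mathrm{AT}}=C_{\mathrm{BF}}$.

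The step I expect to need most care is uniformity in $L$. The lemma's constant depends on $q_L$ rather than on $q$, so I must check that it can be taken monotone in $q_L$, or simply rerun the lemma with $q$ in place of $q_L$. The proof of the lemma makes this possible: every bound used there ($R_{D,q_L}$, the geometric sum $Dq_L^{d-1}/(1-q_L)$, and the rate $m=-\log q_L$) only worsens as $q_L$ increases. Replacing $q_L$ by $q$ therefore gives SSM constants $M,m$ depending only on $q$, and hence, via \cite{caputo2021block}, a block-factorization constant $C_{\mathrm{AT}}(q)$ that is independent of $L$.

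Next, apply Theorem~\ref{thm:main} with $\Delta\le5$ and $\kappa=2$. The factors are $\Phi_1\cup\Phi_2$, with perturbation sizes $\delta_{\phi_{uv}}=|\beta'-\beta|J_{uv}$ and $\delta_{\phi_v}=|h'_v-h_v|$ as computed above. This gives
\[
\dH^2(\nu,\mu)\le 5\,C_{\mathrm{AT}}(q)\Bigl(\sum_{\{u,v\}\in E_L}|\beta'-\beta|^2J_{uv}^2+\sum_{v\in V_L}|h'_v-h_v|^2\Bigr),
\]
which is \eqref{eq:torus-ising-bound} with $C_q=5C_{\mathrm{AT}}(q)$. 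The convergence statement under \eqref{eq:torus-ising-l2-perturbation} follows immediately because $C_q$ does not depend on $L$.

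Finally, the sufficient condition. Write $\epsilon_L$ for the left side of \eqref{eq:torus-ising-small-perturbation}, so $\epsilon_L=o(L^{-1})$. Using $|E_L|=2L(L-1)\le 2L^2$ and $|V_L|=L^2$:
\begin{itemize}
\item the field terms satisfy $\sum_v|h'_v-h_v|^2\le L^2\epsilon_L^2$;
\item the coupling terms satisfy $\sum_{\{u,v\}}|\beta'-\beta|^2J_{uv}^2\le 2L^2(\max_{\{u,v\}} J_{uv})^2|\beta'-\beta|^2\le L^2\epsilon_L^2$, where the factor $\sqrt2$ in \eqref{eq:torus-ising-small-perturbation} absorbs the $2$.
\end{itemize}
The total is therefore at most $2L^2\epsilon_L^2=o(1)$, so \eqref{eq:torus-ising-l2-perturbation} holds.
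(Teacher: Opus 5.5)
Your proposal is correct and follows the paper's proof essentially step for step: Lemma~\ref{lem:ising-dobrushin-block-factorization} gives block factorization, and hence approximate tensorization, with an $L$-independent constant; Theorem~\ref{thm:main} with $\Delta\le5$ and $\kappa=2$ then gives $C_q=5C_{\mathrm{AT}}$; and the counts $|V_L|=L^2$, $|E_L|\le2L^2$ bound the sum by $2L^2\epsilon_L^2=o(1)$. Your extra remark, that the lemma's constants only worsen as $q_L$ increases and so can be taken to depend on $q$ alone, makes explicit a uniformity point that the paper simply asserts.
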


\begin{remark}[Comparison to standard perturbation estimates]
The left-hand side of Inequality~\eqref{eq:torus-ising-small-perturbation} controls, up to constant factors, the worst-case total variation distance between the usual single-site Gibbs kernels targeting $\mu$ and $\nu$. Thus, our result says that $\mu, \nu$ are close as long as we have strong spatial mixing (Inequality~\eqref{eq:torus-dobrushin-uniform}) and the corresponding one-step kernel error is less than roughly $\frac{1}{L}$. Naive perturbation bounds such as \cite{Mitrophanov2005} require this error to be less than roughly $\frac{1}{L^{2}}$, which is (up to $O(\log(L))$ factors) the inverse of the mixing time of the single-site Gibbs sampler at sufficiently high temperature (that is, for $\beta$ sufficiently small).
\end{remark}

\begin{proof}[Proof of Theorem~\ref{thm:torus-ising}]

\purple{By Lemma~\ref{lem:ising-dobrushin-block-factorization}, Inequality~\eqref{eq:torus-dobrushin-uniform} implies Assumption~\ref{assume-block-factor}, with a constant \(C_1\) depending only on \(q\), since the square grid has maximum degree at most four. In particular, \(C_1\) is independent of \(L\) and of the external field. Assumption~\ref{assume-approx-tensor} therefore holds with \(C_{\mathrm{AT}}=C_1\).}

Applying Theorem~\ref{thm:main} and using $\Delta\le5$ and $\kappa=2$ gives
\begin{align}
\dH^2(\nu,\mu)
&\le
\frac{C_1\Delta\kappa}{2}
\sum_{\phi\in\Phi}\delta_\phi^2 \notag\\
&\le
5C_1
\left(
\sum_{\{u,v\}\in E_L}|\beta'-\beta|^2J_{uv}^2
+
\sum_{v\in V_L}|h_v'-h_v|^2
\right).
\label{eq:torus-ising-proof-bound}
\end{align}
This proves \eqref{eq:torus-ising-bound} with $C_{q}=5C_1$, and \eqref{eq:torus-ising-l2-perturbation} gives the first asymptotic conclusion. For the final assertion, let the left-hand side of \eqref{eq:torus-ising-small-perturbation} be $a_L$. Since $|V_L|=L^2$ and $|E_L|=2L(L-1)\le2L^2$,
\[
\sum_{\{u,v\}\in E_L}|\beta'-\beta|^2J_{uv}^2
+
\sum_{v\in V_L}|h_v'-h_v|^2
\le
2L^2a_L^2
=
o(1).
\]
Thus \eqref{eq:torus-ising-small-perturbation} implies \eqref{eq:torus-ising-l2-perturbation}.

\end{proof}

\subsubsection[Comparison to Earlier Bounds]{Comparison to Bounds in \cite{lin2025perturbation}}\label{subsubsec:comparison-previous}

We now give two partial comparisons with \cite{lin2025perturbation}. In the common setting of local Gibbs perturbations, Theorem~\ref{thm:main} is stronger: it gives a direct square-sum estimate without the sequential set decomposition used there. For the more general perturbations treated in \cite{lin2025perturbation}, the inputs differ, so only partial comparisons are possible. Proposition~\ref{prop:hellinger-block-bound} compares the aggregation steps more broadly, and we then examine the standard uniform verification of the assumptions in \cite{lin2025perturbation} on the square grid. 

Recall that the key ingredient in \cite{lin2025perturbation} is the square-Hellinger subadditivity inequality
\begin{equation}\label{Ineq-Subadd-Hellinger}
\dH^2(\nu,\mu)
\leq
\sum_{j=1}^{\ell}
\dH^2\bigl(\nu_{S_j\cup \Pi_j},\mu_{S_j\cup \Pi_j}\bigr),
\end{equation}
which applies whenever the two measures admit a common factorization of the form
\begin{equation}\label{Bayesian-Factor}
\mu(\sigma)
=
\prod_{j=1}^{\ell}
\mu(\sigma_{S_j}\mid\sigma_{\Pi_j}),
\qquad
\nu(\sigma)
=
\prod_{j=1}^{\ell}
\nu(\sigma_{S_j}\mid\sigma_{\Pi_j}),
\end{equation}
where $\{S_j\}_{j=1}^{\ell}$ forms a partition of $V$ and
\begin{equation}\label{EqContainmentFactor}
\Pi_1=\emptyset,
\qquad
\Pi_j \subseteq \bigcup_{i=1}^{j-1} S_i.
\end{equation}

Under block factorization, conditional Hellinger distances can instead be combined as follows.

\begin{proposition}\label{prop:hellinger-block-bound}
Suppose that Assumption~\ref{assume-block-factor} holds and that $\nu\ll\mu$. Then, for any family of blocks $\mathcal A\subseteq 2^V$ and any choice of nonnegative weights $\alpha=\{\alpha_A\}_{A\in\mathcal A}$ with $\gamma(\alpha)>0$,
\begin{equation*}\label{eq:hellinger-block-bound}
\dH^2(\nu,\mu)
\leq
\frac{2C_{\mathrm{BF}}}{\gamma(\alpha)}
\sum_{A\in\mathcal A}
\alpha_A
\E_{\nu_{A^c}}
\left[
\dH^2\left(
\nu_A^{\tau_{A^c}},
\mu_A^{\tau_{A^c}}
\right)
\right],
\end{equation*}
where $C_{\mathrm{BF}}$ is the constant in Assumption~\ref{assume-block-factor}. The conditional distribution of $\nu$ may be chosen arbitrarily when the conditioning event has probability zero.
\end{proposition}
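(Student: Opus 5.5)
The plan is to avoid KL entirely. Conditional KL divergences can be much larger than conditional squared Hellinger distances, so Corollary~\ref{Corollary-H2-BlockKL} cannot be used directly. Instead I will work with the square-root density $g=\sqrt{\nu/\mu}$ and a \emph{variance} version of block factorization, obtained by linearizing Assumption~\ref{assume-block-factor}. The factor $2$ in the statement should come from comparing $\dH^2$ with $\Var_\mu(g)$.

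\textbf{Step 1 (variance factorization).} I will show that Assumption~\ref{assume-block-factor} implies, for every $h:\Omega\to\mathbb R$ and the same $\mathcal A,\alpha$,
\[
\gamma(\alpha)\Var_\mu(h)\le C_{\mathrm{BF}}\sum_{A\in\mathcal A}\alpha_A\,\mu\bigl(\Var_A h\bigr),
\]
where $(\Var_A h)(\tau_{A^c})$ is the variance of $h$ under $\mu_A^{\tau_{A^c}}$. The route is to apply \eqref{Ineq-BlockFactor} to $f=1+\varepsilon h$, which is positive for small $\varepsilon$ since $\Omega$ is finite. A Taylor expansion gives $\Ent_V(1+\varepsilon h)=\tfrac{\varepsilon^2}{2}\Var_\mu(h)+O(\varepsilon^3)$, and likewise $\Ent_A^{\tau_{A^c}}(1+\varepsilon h)=\tfrac{\varepsilon^2}{2}\Var_{\mu_A^{\tau_{A^c}}}(h)+O(\varepsilon^3)$ uniformly in $\tau_{A^c}$. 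Dividing by $\varepsilon^2$ and letting $\varepsilon\to0$ gives the claim with no loss in the constant. This is the step I expect to need the most care: the $O(\varepsilon^3)$ remainders must be uniform, which holds because $\Omega$ and $\mathcal A$ are finite (at worst one reduces to finitely many blocks).

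\textbf{Step 2 (Hellinger versus variance).} Let $g=\sqrt{\nu/\mu}$ and $Z=\mu(g)=\sum_\sigma\sqrt{\nu(\sigma)\mu(\sigma)}$. Then $\dH^2(\nu,\mu)=2(1-Z)$ and $\Var_\mu(g)=\mu(g^2)-Z^2=1-Z^2$. Since $0\le Z\le1$, we have $1-Z\le(1-Z)(1+Z)=1-Z^2$, so
\[
\dH^2(\nu,\mu)\le 2\Var_\mu(g).
\]

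\textbf{Step 3 (local terms).} Fix $A$ and $\tau_{A^c}$ with $\mu_{A^c}(\tau_{A^c})>0$. On the fiber, $g(\sigma_A\tau_{A^c})=c\,\sqrt{\nu_A^{\tau_{A^c}}(\sigma_A)/\mu_A^{\tau_{A^c}}(\sigma_A)}$ with $c=\sqrt{\nu_{A^c}(\tau_{A^c})/\mu_{A^c}(\tau_{A^c})}$. When $\nu_{A^c}(\tau_{A^c})=0$ we have $g\equiv0$ on the fiber, so the choice of conditional for $\nu$ is irrelevant there. Since the variance is at most the second moment about any constant, I will use the constant $c$:
\[
\Var_{\mu_A^{\tau_{A^c}}}(g)\le \mu_A^{\tau_{A^c}}\bigl[(g-c)^2\bigr]=\frac{\nu_{A^c}(\tau_{A^c})}{\mu_{A^c}(\tau_{A^c})}\,\dH^2\bigl(\nu_A^{\tau_{A^c}},\mu_A^{\tau_{A^c}}\bigr).
\]
Averaging over $\mu_{A^c}$ gives $\mu(\Var_A g)\le\E_{\nu_{A^c}}\bigl[\dH^2(\nu_A^{\tau_{A^c}},\mu_A^{\tau_{A^c}})\bigr]$.

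\textbf{Conclusion.} Chaining Step 2, Step 1 with $h=g$, and Step 3 gives
\[
\dH^2(\nu,\mu)\le 2\Var_\mu(g)\le\frac{2C_{\mathrm{BF}}}{\gamma(\alpha)}\sum_{A\in\mathcal A}\alpha_A\,\E_{\nu_{A^c}}\bigl[\dH^2(\nu_A^{\tau_{A^c}},\mu_A^{\tau_{A^c}})\bigr],
\]
which is the proposition.
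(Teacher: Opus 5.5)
Your proposal is correct and follows essentially the same route as the paper. You linearize Assumption~\ref{assume-block-factor} into a variance factorization, bound $\dH^2(\nu,\mu)\le 2\Var_\mu(\sqrt{f})$, and control each conditional variance of $\sqrt f$ by $m\,\dH^2(\nu_A^{\tau_{A^c}},\mu_A^{\tau_{A^c}})$ with $m=\nu_{A^c}(\tau_{A^c})/\mu_{A^c}(\tau_{A^c})$. Your Step~3, which centers at the constant $\sqrt m$, is just a cleaner phrasing of the paper's computation $m-b^2\le 2\sqrt m(\sqrt m-b)$, since $\mu_A^{\tau_{A^c}}[(\sqrt f-\sqrt m)^2]=2\sqrt m(\sqrt m-b)$.
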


The proof is given in Appendix~\ref{app:comparison-previous}. Set $A_j=S_j\cup\Pi_j$ for each $j\in [\ell]$, with unit weight. Since the sets $S_j$ partition $V$ and $\gamma(\alpha)\geq 1$, Proposition~\ref{prop:hellinger-block-bound} gives
\begin{equation}\label{eq:hellinger-sequential-blocks}
\dH^2(\nu,\mu)
\leq
2C_{\mathrm{BF}}
\sum_{j=1}^{\ell}
\E_{\nu_{A_j^c}}
\left[
\dH^2\left(
\nu_{A_j}^{\tau_{A_j^c}},
\mu_{A_j}^{\tau_{A_j^c}}
\right)
\right].
\end{equation}
The terms in \eqref{Ineq-Subadd-Hellinger} are marginal Hellinger distances, whereas those in \eqref{eq:hellinger-sequential-blocks} are full-boundary conditional Hellinger distances, so there is no termwise comparison in general. The point is instead that, at $r=1$, when $B_1(A_j)=A_j$ in the notation of \cite{lin2025perturbation}, Assumptions 2 and 3 there directly bound every conditional Hellinger distance in \eqref{eq:hellinger-sequential-blocks} by $C_2C_3f(1)\varepsilon$, where $\varepsilon$ is the one-step kernel error used in that paper. Therefore
\[
\dH(\nu,\mu)
\leq
\sqrt{2C_{\mathrm{BF}}\ell}\,C_2C_3f(1)\varepsilon.
\]
Assumption 4 of \cite{lin2025perturbation} makes the right-hand side at most $\sqrt{2C_{\mathrm{BF}}}\,C_2C_3C_4f(1)/f(\log\ell)$, so Proposition~\ref{prop:hellinger-block-bound} recovers the perturbative scale used there. More generally, under block factorization it only requires $\sqrt{\ell}\,\varepsilon\to 0$, and therefore removes the extra $f(\log\ell)$ loss caused by taking $r$ of order $\log\ell$ in \cite{lin2025perturbation}. This comparison retains the propagation-of-perturbations assumption from \cite{lin2025perturbation} and isolates the global aggregation step. For local Gibbs perturbations, Theorem~\ref{thm:main} goes further by bounding the stationary error directly from the local potential errors.

The standard uniform verification discussed in \cite{lin2025perturbation} is most effective when the sets $S_j \cup \Pi_j$ remain uniformly small. That turns out to be impossible for this model. For the remainder of this subsection, assume that $J_{uv}>0$ for every $\{u,v\}\in E_L$. We have the following lower bound on the sizes of the sets appearing in the factorization:

\begin{lemma} \label{LemmaPreDoesntWork}
Let $\{S_{j}^{(L)}, \Pi_{j}^{(L)}\}$ be a sequence of sets satisfying the $\mu$-factorization in \eqref{Bayesian-Factor} and \eqref{EqContainmentFactor} for the Ising model on $G_{L}$. Then
\[
\max_j |S_j^{(L)}\cup\Pi_j^{(L)}| \gtrsim L.
\]
\end{lemma}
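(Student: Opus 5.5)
The plan is to turn the factorization \eqref{Bayesian-Factor} into a family of graph-separation constraints and then use vertex isoperimetry on the $L\times L$ box. Write $U_k=\bigcup_{i\le k}S_i$. Summing \eqref{Bayesian-Factor} over the coordinates in $V\setminus U_k$ shows that the product of the first $k$ factors is the $\mu$-marginal on $U_k$. Hence \eqref{Bayesian-Factor} says that, under $\mu$, $\sigma_{S_k}$ is conditionally independent of $\sigma_{U_{k-1}\setminus\Pi_k}$ given $\sigma_{\Pi_k}$, for every $k$.

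\emph{Step 1 (dependence along paths).} I will show the following claim. Suppose $x\in S_k$ and $y\in U_{k-1}\setminus\Pi_k$ are joined by a path in $G_L$ that avoids $\Pi_k$. Then $\sigma(x)$ and $\sigma(y)$ are conditionally dependent given $\sigma_{\Pi_k}$, for every boundary value of $\sigma_{\Pi_k}$. This path may pass through vertices not yet revealed, which are marginalized out. To prove the claim:
\begin{itemize}
\item Conditionally on $\sigma_{\Pi_k}$, the law of the remaining spins is again a ferromagnetic Ising measure. Its graph is $G_L-\Pi_k$ with couplings $J_{uv}>0$, and the boundary spins only shift the external fields.
\item For a ferromagnetic Ising measure with arbitrary fields, $\Cov(\sigma(x),\sigma(y))>0$ whenever $x$ and $y$ lie in the same connected component. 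I would obtain this from FKG/Griffiths-type positivity, for example via the Edwards--Sokal coupling: the event that $x$ and $y$ lie in a common open cluster has positive probability on a connected graph with all $J_{uv}>0$.
\end{itemize}
Since conditional independence forces zero covariance, the claim follows. Consequently $\Pi_k$ must separate $S_k$ from $U_{k-1}\setminus\Pi_k$ in $G_L$. In particular, let $D_k$ be any connected component of $G_L[V\setminus U_{k-1}]$ that meets $S_k$. Then every vertex of $U_{k-1}$ adjacent to $D_k$ lies in $\Pi_k$.

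\emph{Step 2 (choice of index and isoperimetry).} Let $k$ be the first index with $|U_k|\ge L^2/2$, so that $|U_{k-1}|<L^2/2\le |U_k|$. If $|S_k|\ge cL$ we are done, so assume $S_k$ is small. I will then use a vertex-isoperimetric inequality on the box. A subset $W$ of $[L]^2$ with $|W|$ and $|V\setminus W|$ both of order $L^2$ has inner vertex boundary of size at least $c'L$. The same holds for a connected piece of $V\setminus U_{k-1}$ of area of order $L^2$ together with the part of $U_{k-1}$ facing it. Combined with Step 1, the part of $\partial D_k$ inside $U_{k-1}$ is contained in $\Pi_k$. So it suffices to find a component $D_k$ meeting $S_k$ that is macroscopic and whose boundary is not mostly the outer edge of the box.

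\emph{Main obstacle.} The delicate point is exactly this choice. $V\setminus U_{k-1}$ might split into many small components, or $S_k$ might sit in a small one. I would handle this by choosing the index more carefully. Track the largest component $D^\ast$ of $V\setminus U_{j-1}$ as $j$ increases: at each step it can only shrink by removal of $S_j$, or split. Consider the first $j$ at which $S_j$ meets $D^\ast$ while $D^\ast$ still has area $\ge L^2/4$, or the step at which $D^\ast$ is split into pieces each of area $<L^2/4$. In the first case, Step 1 applied to $D^\ast$ gives $|\Pi_j|\gtrsim L$. In the second case, $S_j$ itself must contain a separator of a region of area $\asymp L^2$, which also has size $\gtrsim L$. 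As a fallback, if $V\setminus U_{j-1}$ is fragmented into small components, each small component is enclosed by vertices of $U_{j-1}$. Summing boundaries shows that $U_{j-1}$ together with earlier $\Pi$'s already had a large interface, and I would induct backward to find the step where that interface was required to lie in a single $\Pi$.
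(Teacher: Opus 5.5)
Your Step~1 is fine. It is the same separation property the paper uses: $\Pi_j$ separates $S_j$ from $U_{j-1}\setminus\Pi_j$ in $G_L$. The paper imports it from the faithfulness results of Fallat et al.\ instead of proving strict covariance positivity. That matters because FKG by itself only gives covariance $\ge 0$, not $>0$.

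The gap is in Step~2.

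\emph{The first case fails.} You claim that if $S_j$ meets the largest unrevealed component $D^\ast$ while $|D^\ast|\ge L^2/4$, then $|\Pi_j|\gtrsim L$. But Step~1 only gives $\partial D^\ast\subseteq\Pi_j$, and $\partial D^\ast$ can be tiny when $D^\ast$ is almost all of $V$. For example, take $S_1=\{(1,1)\}$. At $j=2$, the component $D^\ast=V\setminus\{(1,1)\}$ is met by $S_2$, yet $\partial D^\ast=\{(1,1)\}$. Isoperimetry only helps when both $D^\ast$ and its complement are macroscopic, and your choice of index never produces such a step.

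\emph{The rest of Step~2 does not close the gap.} In the second case, $D^\ast$ can drop below $L^2/4$ by losing a single vertex. When it genuinely splits, the separating set is $(S_j\cap D^\ast)\cup\partial D^\ast$, which is not a subset of $S_j$. The backward-induction fallback is not an argument. So the passage from per-index separation to some $|S_j\cup\Pi_j|\gtrsim L$ is missing.

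\emph{How the paper closes it.} The paper uses treewidth. Eliminate vertices in the order $S_\ell,S_{\ell-1},\dots,S_1$. By the separation property, every fill edge created while eliminating $S_j$ stays inside $S_j\cup\Pi_j$. So this ordering has width at most $\max_j|S_j\cup\Pi_j|-1$, and $\operatorname{tw}(G_L)=L$ finishes the proof.

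\emph{An isoperimetric fix.} If you want to keep your approach, track a persistent giant component.
\begin{itemize}
\item Assume $|S_j\cup\Pi_j|\le\epsilon L$ for all $j$. Keep a component $D$ of the unrevealed set with $|D|>L^2/2$, starting from $D=V$.
\item When $S_j$ first meets $D$, Step~1 gives $|\partial D|\le|\Pi_j|\le\epsilon L$.
\item Apply grid vertex-isoperimetry, $|\partial W|\ge c\sqrt{|W|}$ for $|W|\le L^2/2$, to $W=V\setminus(D\cup\partial D)$. Since $\partial W\subseteq\partial D$, this gives $|V\setminus D|=O(\epsilon^2L^2+\epsilon L)$.
\item Each piece of $D\setminus S_j$ has exterior boundary inside $\partial D\cup S_j$. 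So the total boundary over all pieces is at most $4(|\partial D|+|S_j|)\le 8\epsilon L$.
\item If every piece had size at most $L^2/2$, their total size would be $O(\epsilon^2L^2)$. But their total size is at least $L^2-O(\epsilon^2L^2)-2\epsilon L$, a contradiction.
\end{itemize}
Hence a giant survives every step. This contradicts the fact that the unrevealed set is eventually empty.
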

\begin{proof}
The proof relies on an object called the ``tree width" of a graph, first defined in \cite{ROBERTSON1986309}. Since the concept of tree width is not central to the paper, we will treat it as a well-studied black-box function $\operatorname{tw}$ sending graphs to nonnegative integers.

Lemma~\ref{lem:factorization-treewidth} gives
\[
\operatorname{tw}(G_L)
\leq
\max_j |S_j^{(L)}\cup\Pi_j^{(L)}|-1.
\]
For $L\geq 2$, the $L\times L$ grid has treewidth $L$ \cite[Corollary~89]{bodlaender1998partial}. Combining these two bounds gives the conclusion.
\end{proof}

Lemma~\ref{LemmaPreDoesntWork} shows that at least one of the sets in \eqref{Ineq-Subadd-Hellinger} must grow with $L$. To compare scales, consider the standard uniform verification of Assumption~3 described in Remark~4 of \cite{lin2025perturbation}. Let $A_j=S_j\cup\Pi_j$, write
\[
M_L=\max_j |A_j|,
\qquad
\varepsilon_L=\sup_S d(Q|_S,K|_S),
\]
where the supremum is over the restricted chains used in that verification. If the inverse relaxation rate on a region $S$ is of order $|S|$, the usual perturbation estimate gives, schematically,
\begin{equation}
\dH(\nu,\mu)
\lesssim
\sqrt{\ell}\left(
e^{-mr}+\max_j |B_r(A_j)|\,\varepsilon_L
\right).
\label{eq:torus-factorization-bound}
\end{equation}
This is the standard uniform route proposed in \cite{lin2025perturbation}, not a lower bound on every possible use of that result. Choosing $r$ of order $\log\ell$ controls the first term. Even before enlarging $A_j$ to $B_r(A_j)$, the coefficient of $\varepsilon_L$ is at least $\sqrt{\ell}M_L$. Since the sets $S_j$ partition the $L^2$ vertices,
\[
\ell M_L\geq \sum_{j=1}^{\ell}|S_j|=L^2,
\]
and Lemma~\ref{LemmaPreDoesntWork} gives $M_L\gtrsim L$. Hence
\[
\sqrt{\ell}M_L
=
\sqrt{(\ell M_L)M_L}
\gtrsim
L^{3/2}.
\]
For the single-site Gibbs perturbations considered here, with the same random-scan normalization, $\varepsilon_L$ is comparable up to constants to the perturbation scale on the left-hand side of \eqref{eq:torus-ising-small-perturbation}. By contrast, Theorem~\ref{thm:torus-ising} gives a bound with coefficient of order $L$ on this scale. Thus this standard uniform verification of \cite{lin2025perturbation} loses an unbounded factor, at least $L^{1/2}$, on the square grid. A nonuniform or model-specific verification could give a better comparison.

{\color{purple}Theorem~\ref{thm:torus-ising} gives the perturbation scale for this class. Proposition~\ref{prop:hellinger-block-bound} recovers the square-root aggregation directly from the local conditional estimates in \cite{lin2025perturbation}, while Lemma~\ref{LemmaPreDoesntWork} explains why the standard uniform verification proposed there can lose an unbounded factor on the square grid. Section~\ref{subsec:Example2} applies Theorems~\ref{thm:main} and~\ref{thm:approx-block-gibbs} to image sampling.}

\subsection{Using Ising Priors for Image Observations}\label{subsec:Example2}

We next consider a Bayesian model for image restoration with Ising priors. Models of this sort for Bayesian image restoration go back at least to Geman and Geman \cite{GemanGeman1984}. In this section, we assume that there have been many noisy observations per pixel - the number $n_{v}$ in Equation \eqref{EqDefNObs} corresponds to the number of observations of pixel $v$, and of course the case that $n_{v} = n$ does not depend on $v$ is typical.

We study two closely-related MCMC algorithms for this problem. We first consider a sampler on an augmented state space, and then a simpler version without the auxiliary-variable Metropolis--Hastings update. In order to obtain reasonable estimates, we combine subsampling with precomputed control variates; related data-reduction approaches appear in \cite{QuirozKohnVillaniTran2019,HugginsAdamsBroderick2017}.

\subsubsection{Notation}
Let \(G=(V,E)\) be a finite nearest-neighbor subgraph of \(\mathbb Z^2\) and let \(\Omega=\{-1,+1\}^{V}\).  For a prior distribution, we use the Ising model in \eqref{EqDefIsingGibbs} with coupling constants \(J:E\to[0,\infty)\), inverse temperature \(\beta>0\), and external field \(h:V\to\mathbb R\).
For each vertex \(v\in V\), let
\begin{equation} \label{EqDefNObs}
Z_v=\{z_{v,1},\dots,z_{v,n_v}\},
\qquad n_v\ge2,
\end{equation}
and let \(\ell_v(\sigma(v);z_{v,j})\) be the log-likelihood of an observed data point $z_{v,j}$ given a ``true'' image $\sigma \in \Omega$.\footnote{For \(z\in\{-1,+1\}\), a prototypical log-likelihood for this situation is \(\exp(\ell_v(\sigma(v);z))=p\mathbf 1_{\{\sigma(v)=z\}}+(1-p)\mathbf 1_{\{\sigma(v)\neq z\}}\) for some \(0.5<p<1\).  This says that there is a true image \(\sigma\), and each observation is flipped with probability \(1-p\).} \purple{Assume that these log-likelihoods are finite for every observed datum.}
Conditional on \(\sigma\), assume the observations are independent.  Then the posterior distribution on \(\Omega\) is
\begin{equation*}\label{eq:ising-posterior-before-field}
\mu(\sigma\mid Z)
\propto
\exp\Bigg(
\beta\sum_{\{u,v\}\in E}J_{uv}\,\sigma(u)\sigma(v)
+
\sum_{v\in V}h_v\,\sigma(v)
+
\sum_{v\in V}\sum_{j=1}^{n_v}
\ell_v(\sigma(v);z_{v,j})
\Bigg).
\end{equation*}

Since \(\sigma(v)\in\{-1,+1\}\), each local likelihood contribution can be written as
a constant plus a linear term in \(\sigma(v)\).  Define
\[
C_v(Z_v)
=
\frac12\sum_{j=1}^{n_v}
\left\{
\ell_v(+1;z_{v,j})+\ell_v(-1;z_{v,j})
\right\},
\]
\[
r_v(z)
=
\frac12\left\{
\ell_v(+1;z)-\ell_v(-1;z)
\right\},
\qquad
H_v(Z_v)
=
\sum_{j=1}^{n_v} r_v(z_{v,j}).
\]
Then
\[
\sum_{j=1}^{n_v}\ell_v(\sigma(v);z_{v,j})
=
C_v(Z_v)+H_v(Z_v)\sigma(v).
\]
Dropping the terms \(C_v(Z_v)\), which do not depend on \(\sigma\), gives
\begin{equation}\label{eq:ising-posterior}
\mu(\sigma\mid Z)
\propto
\exp\Bigg(
\beta\sum_{\{u,v\}\in E}J_{uv}\,\sigma(u)\sigma(v)
+
\sum_{v\in V}\big(h_v+H_v(Z_v)\big)\sigma(v)
\Bigg).
\end{equation}
In \eqref{eq:ising-posterior}, the data enter only through the second term in the exponent (corresponding to individual vertices), not through the first term (corresponding to edges). Thus \purple{Lemma~\ref{lem:ising-dobrushin-block-factorization} checks the spatial-mixing and block-factorization assumptions uniformly in the observed dataset \(Z\) whenever the coupling bound in \eqref{eq:ising-observation-dobrushin-q} is less than one.}

We next set notation for the exact Markov chain.  Let \(\psi(x)=e^x/(e^x+e^{-x})\).  For \(v\in V\), write the exact single-site conditional field as
\begin{equation}\label{eq:exact-singlesite-field}
A_v(\sigma)=\beta\sum_{u:\,\{u,v\}\in E}J_{uv}\sigma(u)+h_v+H_v(Z_v),
\end{equation}
so that
\begin{equation*}
\mu\big(\sigma(v)=+1\mid \sigma_{V\setminus\{v\}},Z\big)
=
\psi(A_v(\sigma)).
\end{equation*}
Algorithm~\ref{alg:exact-gibbs} describes one step of the exact random-scan Gibbs kernel \(K\), whose invariant distribution is \(\mu(\cdot\mid Z)\).

\begin{algorithm}[H]
\caption{One step of the exact random-scan Gibbs sampler}
\label{alg:exact-gibbs}
\begin{algorithmic}[1]
\Require Current configuration \(\sigma\in\{-1,+1\}^V\), data \(Z\).
\State Sample a vertex \(v\sim \mathrm{Unif}(V)\).
\State Compute $H_v(Z_v)$ and the exact field $A_v(\sigma)$ according to \eqref{eq:exact-singlesite-field}.
\State Sample \(B\sim \mathrm{Bernoulli}(\psi(A_v(\sigma)))\), and set
\[
\sigma'(v)=
\begin{cases}
+1, & B=1,\\
-1, & B=0
\end{cases}
\]
and
\[
\sigma'(u)=\sigma(u),\quad u\neq v.
\]
\State \Return \(\sigma'\).
\end{algorithmic}
\end{algorithm}

For both approximate samplers, let \(\xi_v\) be an auxiliary variable with distribution \(Q_v\), and let
\[
\widehat H_v=\widehat H_v(Z_v,\xi_v),
\qquad
e_v(\xi_v)=\widehat H_v(Z_v,\xi_v)-H_v(Z_v).
\]
All expectations in the remainder of this subsection are conditional on \(Z\). Assume that the auxiliary variables are independent under \(Q=\prod_{v\in V}Q_v\).
\purple{Assume also that}
\(\E[\exp(\widehat H_v(Z_v,\xi_v))]\) and
\(\E[\exp(-\widehat H_v(Z_v,\xi_v))]\) are finite. On the augmented state space, define
\begin{equation}\label{eq:augmented-state-target}
\widetilde\pi(\sigma,d\xi\mid Z)
\propto
\exp\Bigg(
\beta\sum_{\{u,v\}\in E}J_{uv}\,\sigma(u)\sigma(v)
+
\sum_{v\in V}\big(h_v+\widehat H_v(Z_v,\xi_v)\big)\sigma(v)
\Bigg)
Q(d\xi).
\end{equation}

Integrating \eqref{eq:augmented-state-target} over the auxiliary variables yields
\[
\begin{aligned}
\widetilde\mu(\sigma\mid Z)
&\propto
\exp\Bigg(
\beta\sum_{\{u,v\}\in E}J_{uv}\sigma(u)\sigma(v)
+
\sum_{v\in V}h_v\sigma(v)
\Bigg)\\
&\qquad {}\purple{\times}
\prod_{v\in V}
\E_{Q_v}\!\left[
    \exp\left(\widehat H_v(Z_v,\xi_v)\sigma(v)\right)
\right].
\end{aligned}
\]
For each \(v\in V\), set
\[
\purple{a_{v,+}}=\E_{Q_v}\left[\exp\left(\widehat H_v(Z_v,\xi_v)\right)\right],
\qquad
\purple{a_{v,-}}=\E_{Q_v}\left[\exp\left(-\widehat H_v(Z_v,\xi_v)\right)\right],
\]
and
\begin{equation}\label{eq:effective-ising-field}
\widetilde H_v(Z_v)
=
\frac12\log
\frac{\purple{a_{v,+}}}{\purple{a_{v,-}}}.
\end{equation}
Since \(\sigma(v)\in\{-1,+1\}\), we have
\[
\E_{Q_v}\!\left[
    \exp\left(\widehat H_v(Z_v,\xi_v)\sigma(v)\right)
\right]
=
\sqrt{\purple{a_{v,+}a_{v,-}}}\,
\exp\left(\widetilde H_v(Z_v)\sigma(v)\right).
\]
The factor \(\prod_{v\in V}\sqrt{\purple{a_{v,+}a_{v,-}}}\) does not depend on \(\sigma\), and is
therefore absorbed into the normalizing constant. Hence the marginal distribution
of \(\widetilde\pi\) on \(\Omega\) is
\begin{equation}\label{eq:augmented-state-marginal}
\widetilde\mu(\sigma\mid Z)
\propto
\exp\Bigg(
\beta\sum_{\{u,v\}\in E}J_{uv}\,\sigma(u)\sigma(v)
+
\sum_{v\in V}\big(h_v+\widetilde H_v(Z_v)\big)\sigma(v)
\Bigg).
\end{equation}

Each update of the augmented sampler consists of two stages.  First, conditional on the current auxiliary variable \(\xi_v\), it performs a Gibbs update of the spin at $v$, targeting $\widetilde{\pi}(\sigma(v)\mid \purple{\sigma_{V \setminus \{v\}}},\xi,Z)$. The corresponding approximate single-site field is
\begin{equation}\label{eq:approximate-singlesite-field}
\widehat A_v(\sigma, \xi_v)
=
\beta\sum_{u:\,\{u,v\}\in E}J_{uv}\sigma(u)
+h_v+\widehat H_v(Z_v,\xi_v)
=
A_v(\sigma)+e_v(\xi_v).
\end{equation}
Equivalently,
\[
\widetilde\pi\bigl(\sigma(v)=+1
\mid \sigma_{V\setminus\{v\}},\xi,Z\bigr)
=
\psi\bigl(\widehat A_v(\sigma, \xi_v)\bigr).
\]
After this spin update, the sampler refreshes the auxiliary variable \(\xi_v\) by a Metropolis--Hastings step targeting \(\widetilde\pi(d\xi_v\mid \sigma',\xi_{V\setminus\{v\}},Z)\).
Thus the resulting random-scan sampler leaves \(\widetilde\pi\) invariant, and its \(\sigma\)-marginal is \(\widetilde\mu\) in \eqref{eq:augmented-state-marginal}.
One step of the sampler is given in Algorithm~\ref{alg:noisy-gibbs}.

\begin{algorithm}[H]
\caption{One step of the random-scan sampler on \((\sigma,\xi)\)}
\label{alg:noisy-gibbs}
\begin{algorithmic}[1]
\Require Current state \((\sigma,\xi)\), data \(Z\).
\State Sample \(v\sim \mathrm{Unif}(V)\).
\State Compute the approximate field \(\widehat A_v(\sigma,\xi_v)\) according to \eqref{eq:approximate-singlesite-field}.
\State Sample \(B\sim \mathrm{Bernoulli}(\psi(\widehat A_v(\sigma,\xi_v)))\), and set
\[
\sigma'(v)=
\begin{cases}
+1, & B=1,\\
-1, & B=0,
\end{cases}
\qquad
\sigma'(u)=\sigma(u),\quad u\neq v.
\]
\State Independently sample \(\xi_v^\star\sim Q_v\), and set
\[
\alpha_v
=
1\wedge
\exp\!\left(
\sigma'(v)
\big(
\widehat H_v(Z_v,\xi_v^\star)-\widehat H_v(Z_v,\xi_v)
\big)
\right).
\]
\State With probability \(\alpha_v\), set \(\xi'_v=\xi_v^\star\); otherwise set \(\xi'_v=\xi_v\).  Set \(\xi'_u=\xi_u\) for \(u\neq v\).
\State \Return \((\sigma',\xi')\).
\end{algorithmic}
\end{algorithm}

\subsubsection{\purple{Sampler on the augmented state space}} \label{Sec42MainBound}
Set
\begin{equation}\label{eq:ising-observation-dobrushin-q}
q:=\sup_{v\in V}\sum_{u:\,\{u,v\}\in E}\tanh(\beta J_{uv}).
\end{equation}

\begin{assumption}\label{ass:e_v}
For each \(v\in V\), there exists \purple{\(0\le\vartheta_v^2<\infty\)} such that
\[
\E[e_v]=0,
\qquad
\E\big[\exp(t e_v)\big]
\le
\exp\left(\frac{t^2\vartheta_v^2}{2}\right),
\qquad t\in\mathbb R.
\]
\end{assumption}

Further discussion of this assumption for sampling-without-replacement estimators is provided in Section~\ref{SecCVSubs}.

\begin{theorem}\label{thm:noisy-gibbs-cv}
Suppose that \(G\) has maximum degree at most \(D\), that \(q<1\), and that Assumption~\ref{ass:e_v} holds. Then there is a constant \(C_{D,q}<\infty\), independent of \(|V|\), \(Z\), and the field estimators, such that
\begin{equation}\label{eq:noisy-gibbs-main}
\dH^2\bigl(\widetilde\mu(\cdot\mid Z),\mu(\cdot\mid Z)\bigr)
\le
C_{D,q}\sum_{v\in V}\vartheta_v^4.
\end{equation}
\end{theorem}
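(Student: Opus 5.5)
The plan is to apply Theorem~\ref{thm:main}, since both $\mu(\cdot\mid Z)$ in \eqref{eq:ising-posterior} and $\widetilde\mu(\cdot\mid Z)$ in \eqref{eq:augmented-state-marginal} are Ising measures on the same graph with the same couplings. They differ only in the external field, which is $h_v+H_v(Z_v)$ for $\mu$ and $h_v+\widetilde H_v(Z_v)$ for $\widetilde\mu$. Thus $\widetilde\mu$ is a perturbation of the form \eqref{Perturbed-Measure} with $g=\sum_v g_{\phi_v}$, where $g_{\phi_v}(\sigma)=(\widetilde H_v(Z_v)-H_v(Z_v))\sigma(v)$. Hence $\delta_{\phi_v}=|\widetilde H_v-H_v|$, and $\delta_\phi=0$ for edge factors.

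First, the hypotheses hold uniformly. The posterior's coupling matrix does not involve $Z$, so $q<1$ and maximum degree $D$ let Lemma~\ref{lem:ising-dobrushin-block-factorization} give Assumption~\ref{assume-block-factor}, and hence Assumption~\ref{assume-approx-tensor}. The constant $C_{\mathrm{AT}}$ depends only on $(D,q)$ and not on the field, so it is independent of $Z$ and of the estimators. The data-dependent field is arbitrary in that lemma. Theorem~\ref{thm:main} with $\Delta\le D+1$ and $\kappa=2$ then gives
\[
\dH^2(\widetilde\mu,\mu)\le C_{\mathrm{AT}}(D+1)\sum_{v\in V}\bigl(\widetilde H_v(Z_v)-H_v(Z_v)\bigr)^2 .
\]

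The key step is a second-order bound $|\widetilde H_v-H_v|\le \vartheta_v^2/2$, which converts the square of the field error into $\vartheta_v^4$. By \eqref{eq:effective-ising-field},
\[
\widetilde H_v-H_v=\tfrac12\bigl(\log\E[e^{e_v}]-\log\E[e^{-e_v}]\bigr).
\]
Jensen's inequality together with $\E[e_v]=0$ gives $\log\E[e^{\pm e_v}]\ge 0$. Assumption~\ref{ass:e_v} with $t=\pm1$ gives $\log\E[e^{\pm e_v}]\le \vartheta_v^2/2$. Each logarithm therefore lies in $[0,\vartheta_v^2/2]$, so their difference has absolute value at most $\vartheta_v^2/2$, and $|\widetilde H_v-H_v|\le \vartheta_v^2/4$. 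Substituting yields \eqref{eq:noisy-gibbs-main} with $C_{D,q}=C_{\mathrm{AT}}(D+1)/16$.

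The main obstacle is noticing the cancellation in this step. A first-order bound would give only $|\widetilde H_v-H_v|\lesssim\vartheta_v$, and hence $\sum_v\vartheta_v^2$. The gain to $\vartheta_v^4$ comes from the mean-zero condition, which forces both cumulant generating functions to be nonnegative, combined with the symmetric log-ratio form of $\widetilde H_v$. The only other point to check is that the constant from Lemma~\ref{lem:ising-dobrushin-block-factorization} is genuinely uniform in the external field, which that lemma states explicitly.
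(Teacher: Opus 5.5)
Your proposal is correct and follows essentially the same route as the paper: the Jensen and mean-zero argument places both cumulant generating functions in $[0,\vartheta_v^2/2]$, giving $|\widetilde H_v(Z_v)-H_v(Z_v)|\le\vartheta_v^2/4$, and Theorem~\ref{thm:main} is then applied to this field-only perturbation, with the approximate-tensorization constant supplied uniformly in the external field by Lemma~\ref{lem:ising-dobrushin-block-factorization}. The only blemish is cosmetic: you first announce the key bound as $\vartheta_v^2/2$ but then correctly prove the sharper $\vartheta_v^2/4$, and that sharper bound is the one your constant $C_{\mathrm{AT}}(D+1)/16$ uses.
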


\begin{proof}
By \eqref{eq:effective-ising-field},
\[
\widetilde H_v(Z_v)-H_v(Z_v)
=
\frac12
\left(
\log\E[\exp(e_v)]-\log\E[\exp(-e_v)]
\right).
\]
By Jensen's inequality and the assumption \(\E[e_v]=0\), both logarithms are nonnegative. \purple{The second part of Assumption~\ref{ass:e_v} bounds each logarithm by \(\vartheta_v^2/2\), so their difference has absolute value at most \(\vartheta_v^2/2\). Therefore}
\[
\big|\widetilde H_v(Z_v)-H_v(Z_v)\big|
\le
\purple{\frac{\vartheta_v^2}{4}}.
\]
\purple{Lemma~\ref{lem:ising-dobrushin-block-factorization} shows that the condition \(q<1\) in \eqref{eq:ising-observation-dobrushin-q} implies Assumption~\ref{assume-block-factor}, with a constant depending only on \(D\) and \(q\), and hence Assumption~\ref{assume-approx-tensor} with the same constant.}

The measures \(\widetilde\mu(\cdot\mid Z)\) and \(\mu(\cdot\mid Z)\) differ only through the change in external field
\[
g(\sigma)
=
\sum_{v\in V}
\big(\widetilde H_v(Z_v)-H_v(Z_v)\big)\sigma(v).
\]
Applying Theorem~\ref{thm:main} to this perturbation with
\[
\delta_v
=
\frac12\osc\left(\bigl(\widetilde H_v(Z_v)-H_v(Z_v)\bigr)\sigma(v)\right)
=
\big|\widetilde H_v(Z_v)-H_v(Z_v)\big|
\le
\purple{\frac{\vartheta_v^2}{4}}
\]
proves \eqref{eq:noisy-gibbs-main}.
\end{proof}

\subsubsection{\purple{Naive Noisy Algorithm}}\label{subsec:fresh-field-estimates}

{\color{purple}We now use the same subsample-based field estimates with a simpler naive sampler.} The auxiliary variable \(\xi_v\) is used only for the current spin update, rather than kept as part of the state and updated by the Metropolis--Hastings step in Algorithm~\ref{alg:noisy-gibbs}. Thus the resulting chain does not generally have a stationary distribution of the form \eqref{Perturbed-Measure}. One step is given in Algorithm~\ref{alg:fresh-field-gibbs}.

\begin{algorithm}[H]
\caption{One step of the naive random-scan sampler}
\label{alg:fresh-field-gibbs}
\begin{algorithmic}[1]
\Require Current configuration \(\sigma\in\{-1,+1\}^V\), data \(Z\).
\State Sample \(v\sim\mathrm{Unif}(V)\).
\State Independently sample \(\xi_v\sim Q_v\).
\State Compute \(\widehat A_v(\sigma,\xi_v)\) according to \eqref{eq:approximate-singlesite-field}.
\State Sample \(B\sim\mathrm{Bernoulli}(\psi(\widehat A_v(\sigma,\xi_v)))\), and set
\[
\sigma'(v)=
\begin{cases}
+1, & B=1,\\
-1, & B=0,
\end{cases}
\qquad
\sigma'(u)=\sigma(u),\quad u\neq v.
\]
\State \Return \(\sigma'\).
\end{algorithmic}
\end{algorithm}

For \(v\in V\) and
\(\tau_{\{v\}^c} \in\{-1,+1\}^{V\setminus\{v\}}\), let \(A_v(\tau_{\{v\}^c})\) denote the value
of the field in \eqref{eq:exact-singlesite-field} under the boundary condition \(\tau_{\{v\}^c}\). After averaging over the new draw \(\xi_v\sim Q_v\), the single-site
update distribution is
\begin{equation}\label{eq:fresh-field-conditional}
    \overline q_v^{\tau_{\{v\}^c}}(+1)
    =
    \E\left[\psi\bigl(A_v(\tau_{\{v\}^c})+e_v\bigr)\right],
    \qquad
    \overline q_v^{\tau_{\{v\}^c}}(-1)
    =
    1-\overline q_v^{\tau_{\{v\}^c}}(+1).
\end{equation}
Let \(\overline K\) denote the transition kernel in
Algorithm~\ref{alg:fresh-field-gibbs}, which can therefore be written as
\[
\overline K(\tau,\sigma)
=
\frac{1}{|V|}
\sum_{v\in V}
\mathbf{1}\{\sigma_{\{v\}^c}=\tau_{\{v\}^c}\}
\overline q_v^{\tau_{\{v\}^c}}(\sigma(v)).
\]
Since all single-site update probabilities in \eqref{eq:fresh-field-conditional} are strictly positive, \(\overline K\) is irreducible and aperiodic on the finite state space \(\Omega\). Hence it has a unique invariant distribution, which we denote by \(\overline\mu\).

We then have the following error estimate, which is essentially the same as Theorem \ref{thm:noisy-gibbs-cv}:

\begin{theorem}\label{thm:fresh-field-gibbs}
Suppose that \(G\) has maximum degree at most \(D\), that \(q<1\), and that
Assumption~\ref{ass:e_v} holds. Then there is a constant
\(C_{D,q}<\infty\), independent of \(|V|\), \(Z\), \purple{and the field estimators},
such that
\begin{equation}\label{eq:fresh-field-main}
    \dH^2\bigl(\overline\mu(\cdot\mid Z),\mu(\cdot\mid Z)\bigr)
    \le
    C_{D,q}\sum_{v\in V}\vartheta_v^4.
\end{equation}
\end{theorem}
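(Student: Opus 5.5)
The plan is to apply Theorem~\ref{thm:approx-block-gibbs} with singleton blocks $\mathcal A=\{\{v\}:v\in V\}$ and uniform scan probabilities $p_{\{v\}}=1/|V|$. Then $\gamma(p)=1/|V|$, and the prefactor $C_p/\gamma(p)$ multiplied by $p_{\{v\}}$ equals $C_p$. By Lemma~\ref{lem:ising-dobrushin-block-factorization}, the condition $q<1$ gives Assumption~\ref{assume-block-factor}, uniformly in $Z$ because the data only enter the external field. This implies Assumption~\ref{assume-scan-specific-factor} with $C_p=C_{\mathrm{BF}}$ depending only on $D,q$. The kernel $\overline K$ is exactly of the form $\widetilde K$ with $q_{\{v\}}^{\tau_{\{v\}^c}}=\overline q_v^{\tau_{\{v\}^c}}$. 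So by \eqref{eq:approx-block-uniform} it suffices to show the uniform local bound
\[
\sup_{\tau_{\{v\}^c}}\dKL\bigl(\overline q_v^{\tau_{\{v\}^c}},\mu_v^{\tau_{\{v\}^c}}\bigr)\le C\vartheta_v^4
\]
with an absolute constant $C$. This single-site Bernoulli estimate is the whole content; everything else is already in the paper.

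For the local bound, write $A=A_v(\tau_{\{v\}^c})$, $p=\psi(A)$, $\bar p=\E[\psi(A+e_v)]$, and use $\dKL(\mathrm{Ber}(\bar p),\mathrm{Ber}(p))\le(\bar p-p)^2/(p(1-p))$, the $\chi^2$ bound. We have $\psi'=2\psi(1-\psi)$ and $|\psi''|\le 2\psi'$. Moreover $\psi(1-\psi)(A+t)\le e^{4|t|}\psi(1-\psi)(A)$, since $\psi(1-\psi)(x)=1/(4\cosh^2 x)$ and $\cosh(x)/\cosh(x+t)\le e^{|t|}$. A second-order Taylor expansion, with the linear term killed by $\E[e_v]=0$, then gives
\[
|\bar p-p|\le 2p(1-p)\,\E\bigl[e_v^2e^{4|e_v|}\bigr].
\]
Sub-Gaussianity (Assumption~\ref{ass:e_v}) and Cauchy--Schwarz give $\E[e_v^2e^{4|e_v|}]\le c\,\vartheta_v^2e^{c\vartheta_v^2}$. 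Hence $\dKL\le p(1-p)\cdot 4c^2\vartheta_v^4e^{2c\vartheta_v^2}\le C\vartheta_v^4$ when $\vartheta_v\le1$, uniformly in $A$. It is essential that the factor $p(1-p)$ appears in the numerator, so that the bound survives extreme fields $A$ where $p(1-p)$ is tiny.

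The main obstacle is the regime $\vartheta_v>1$, where the exponential factor spoils uniformity in the estimator. There I will instead bound the likelihood ratio directly. Since $\psi(A+t)/\psi(A)\le e^{2|t|}$ and likewise for $1-\psi$, we get $\bar p/p\le\E[e^{2|e_v|}]\le 2e^{2\vartheta_v^2}$, and the same bound for $(1-\bar p)/(1-p)$. Hence $\dKL\le\log 2+2\vartheta_v^2\le C\vartheta_v^4$ when $\vartheta_v>1$. Combining the two cases gives the local bound with an absolute constant. Plugging it into \eqref{eq:approx-block-uniform} yields \eqref{eq:fresh-field-main} with $C_{D,q}=C\,C_{\mathrm{BF}}$, independent of $|V|$, $Z$, and the estimators.
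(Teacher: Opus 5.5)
Your proof is correct. Its outer structure matches the paper's exactly: singleton blocks, the uniform scan with $\gamma(p)=|V|^{-1}$, Lemma~\ref{lem:ising-dobrushin-block-factorization} giving the factorization constant uniformly in $Z$, and then \eqref{eq:approx-block-uniform}.

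The difference lies entirely in the single-site estimate.

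\textbf{The paper's route.} The paper never Taylor-expands $\psi$. It writes the averaged update as $\psi(\overline a s)$ for an effective field $\overline a$. It expresses the odds shift as $e^{2(\overline a-a)}=\E[X/(1+rX)]/\E[1/(1+rX)]$, with $X=e^{2e_v}$ and $r=e^{2a}$. It then uses Chebyshev's association inequality (one factor increasing and one decreasing in $X$) to sandwich this ratio between $(\E[X^{-1}])^{-1}$ and $\E[X]$. The sub-Gaussian bound at $t=\pm 2$ gives $|\overline a-a|\le\vartheta_v^2$ for every $\vartheta_v$, and Lemma~\ref{lem:tilt} then gives a local KL bound of $\frac12\vartheta_v^4$.

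\textbf{What each approach buys.} Working in log-odds coordinates makes uniformity in the boundary field $a$ automatic. It needs none of your auxiliary steps: the $\chi^2$ comparison, the self-bounding estimate $|\psi''|\le 2\psi'$ with the $\cosh$ shift comparison, the moment bound on $e_v^2e^{4|e_v|}$, or the case split at $\vartheta_v=1$. It also yields an explicit constant. Finally, it parallels the effective-field arguments used for Theorem~\ref{thm:noisy-gibbs-cv} and Lemma~\ref{lem:random-weight-logit}.

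Your $\chi^2$-plus-Taylor argument gives worse absolute constants and needs a separate crude likelihood-ratio bound for large $\vartheta_v$. In exchange, it does not depend on the binary odds ratio collapsing so neatly, and it makes explicit why the factor $p(1-p)$ keeps the bound uniform under extreme fields.
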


\begin{proof}
Fix \(v\in V\) and a boundary condition \(\tau_{\{v\}^c}\), and write
\[
    \overline p=\overline q_v^{\tau_{\{v\}^c}}(+1),
    \qquad
    \overline a=\frac12\log\frac{\overline p}{1-\overline p}.
\]
Then
\[
\overline q_v^{\tau_{\{v\}^c}} (s)=\psi (\overline{a} s), \qquad s\in \{+1,-1\}.
\]
Set \(X=e^{2e_v}\), $a=A_v(\tau_{\{v\}^c})$ and \(r=e^{2a}\). By
\eqref{eq:fresh-field-conditional},
\begin{equation*}\label{eq:fresh-field-effective-odds}
    e^{2(\overline a-a)}
    =
    \frac{\E[X/(1+rX)]}{\E[1/(1+rX)]}.
\end{equation*}
Since \(x\mapsto x\) is increasing and \(x\mapsto(1+rx)^{-1}\) is
decreasing,
\[
    \E\left[\frac{X}{1+rX}\right]
    \le
    \E[X]\E\left[\frac{1}{1+rX}\right].
\]
Similarly, \(x\mapsto x^{-1}\) is decreasing and
\(x\mapsto x/(1+rx)\) is increasing, so
\[
    \E\left[\frac{1}{1+rX}\right]
    \le
    \E[X^{-1}]\E\left[\frac{X}{1+rX}\right].
\]
These imply that
\[
(E[X^{-1}])^{-1} \le e^{2(\overline a-a)} \leq \E [X].
\]
Assumption~\ref{ass:e_v}, applied with \(t=\pm2\), therefore gives
\[
    e^{-2 \vartheta_v^2}
    \le
    e^{2(\overline a-a)}
    \le
    e^{2 \vartheta_v^2},
\]
and hence
\begin{equation}\label{eq:fresh-field-effective-field}
    |\overline a-a|\le \vartheta_v^2.
\end{equation}
As distributions on \(\{-1,+1\}\), \(\overline q_v^{\tau_{\{v\}^c}}\) is obtained from
\(\mu_v^{\tau_{\{v\}^c}}\) by changing the one-site field from \(a\) to \(\overline a\), with \(g_v(s)=(\overline a-a)s\). Lemma~\ref{lem:tilt}
and \eqref{eq:fresh-field-effective-field} therefore give
\begin{equation}\label{eq:fresh-field-local-kl}
    \dKL\left(\overline q_v^{\tau_{\{v\}^c}},\mu_v^{\tau_{\{v\}^c}}\right)
    \le
    \frac12 |\overline a-a|^2
    \le
    \frac12 \vartheta_v^4.
\end{equation}
\purple{Lemma~\ref{lem:ising-dobrushin-block-factorization} shows that \(q<1\) implies Assumption~\ref{assume-block-factor}, with a constant depending only on \(D\) and \(q\), and hence Assumption~\ref{assume-scan-specific-factor} for the scan below with the same constant.} Apply Theorem~\ref{thm:approx-block-gibbs} with the
singleton blocks \(A=\{v\}\) and scan probabilities
\(p_{\{v\}}=|V|^{-1}\). Since \(\gamma(p)=|V|^{-1}\),
\eqref{eq:fresh-field-local-kl} proves \eqref{eq:fresh-field-main}.
\end{proof}

\subsubsection{Control-variate subsampling} \label{SecCVSubs}
{\color{purple}We now give a concrete subsampling construction for both algorithms and show how to check Assumption~\ref{ass:e_v}.}

For each \(v\in V\), choose a control-variate function \(c_v\) intended to approximate \(r_v\), and define residuals
\[
R_{v,j}
=
r_v(z_{v,j})-c_v(z_{v,j}),
\qquad
\overline R_v
=
\frac1{n_v}\sum_{j=1}^{n_v}R_{v,j}.
\]
Let \(S_v\subseteq\{1,\ldots,n_v\}\) be sampled uniformly without replacement with
\(|S_v|=m_v\) and \(1\le m_v\le n_v\). We approximate \(H_v(Z_v)\) by
\begin{equation}
\label{eq:Hv-hat-special}
\widehat H_v(Z_v,S_v)
=
\sum_{j=1}^{n_v}c_v(z_{v,j})
+
\frac{n_v}{m_v}\sum_{j\in S_v} R_{v,j}.
\end{equation}
The simple subsampling estimator is recovered by taking \(c_v\equiv0\). In this case, \(R_{v,j}=r_v(z_{v,j})\), and \eqref{eq:Hv-hat-special} reduces to
\begin{equation*}
\label{eq:Hv-hat-simple}
\widehat H_v(Z_v,S_v)
=
\frac{n_v}{m_v}\sum_{j\in S_v}r_v(z_{v,j}).
\end{equation*}
{\color{purple}For Algorithms~\ref{alg:noisy-gibbs} and~\ref{alg:fresh-field-gibbs}, take \(\xi_v=S_v\) and let \(Q_v\) be the uniform distribution on subsets of \(\{1,\ldots,n_v\}\) of size \(m_v\).}

For this estimator, the field error is
\[
e_v(S_v)
=
\widehat H_v(Z_v,S_v)-H_v(Z_v)
=
n_v\left(
\frac1{m_v}\sum_{j\in S_v}R_{v,j}
-
\overline R_v
\right).
\]
Thus \(\mathbb E_{S_v}[e_v(S_v)]=0\). Set
\[
b_v
=
\max_{1\le j\le n_v}
\big|R_{v,j}-\overline R_v\big|.
\]
The standard exponential-moment bound for sampling without replacement gives
\begin{equation}\label{eq:finite-pop-mgf}
\E_{S_v}\big[\exp(t e_v(S_v))\big]
\le
\exp\left(
\frac{t^2n_v^2b_v^2}{2m_v}
\right),
\qquad t\in\mathbb R;
\end{equation}
see, for example, \cite[Lemmas~1.1 and~1.3]{bardenet_maillard_2015}.

\begin{corollary}
\label{cor:random-field-budget}
Suppose that \(G\) has maximum degree at most \(D\) and that \(q<1\). Then \purple{there is a constant \(C_{D,q}<\infty\), independent of \(|V|\), \(Z\), and the field estimators, such that}
\begin{equation}\label{eq:subsampling-random-field-bound}
\purple{\max\!\left\{
\dH^2\bigl(\widetilde\mu(\cdot\mid Z),\mu(\cdot\mid Z)\bigr),
\dH^2\bigl(\overline\mu(\cdot\mid Z),\mu(\cdot\mid Z)\bigr)
\right\}}
\le
C_{D,q}
\sum_{v\in V}
\frac{n_v^4b_v^4}{m_v^2}.
\end{equation}
\end{corollary}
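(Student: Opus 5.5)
The plan is to verify Assumption~\ref{ass:e_v} for the control-variate estimator \eqref{eq:Hv-hat-special} with an explicit variance proxy, and then feed it into Theorems~\ref{thm:noisy-gibbs-cv} and~\ref{thm:fresh-field-gibbs}. Take $\xi_v=S_v$ with $Q_v$ uniform on subsets of size $m_v$. The auxiliary variables are drawn independently across $v$, so the independence requirement $Q=\prod_v Q_v$ holds automatically.

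First I check the hypotheses of Assumption~\ref{ass:e_v}. Centering was computed just before the statement: the display for $e_v(S_v)$ shows $\E_{S_v}[e_v(S_v)]=0$, because the subsample mean of the $R_{v,j}$ is unbiased for $\overline R_v$. The exponential-moment bound \eqref{eq:finite-pop-mgf} is exactly the sub-Gaussian condition with
\[
\vartheta_v^2=\frac{n_v^2b_v^2}{m_v}.
\]
The finiteness assumptions on $\E[\exp(\pm\widehat H_v)]$ hold trivially, since $Q_v$ is supported on finitely many subsets and all $r_v(z_{v,j})$ and $c_v(z_{v,j})$ are finite. This needs the tacit assumption that the control variates are finite at the data. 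The bound $\vartheta_v^2<\infty$ is then immediate.

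Next I invoke Theorem~\ref{thm:noisy-gibbs-cv} for $\widetilde\mu$ and Theorem~\ref{thm:fresh-field-gibbs} for $\overline\mu$. Both use the same hypotheses, namely maximum degree at most $D$ and $q<1$, together with Assumption~\ref{ass:e_v}. Each gives
\[
\dH^2\le C_{D,q}\sum_v\vartheta_v^4=C_{D,q}\sum_v\frac{n_v^4b_v^4}{m_v^2}.
\]
Taking $C_{D,q}$ to be the larger of the two constants controls the maximum in \eqref{eq:subsampling-random-field-bound}. This constant is independent of $|V|$, $Z$, and the estimators, as both theorems assert.

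There is essentially no obstacle here. The only point needing care is that \eqref{eq:finite-pop-mgf} is cited from the literature, so I would double-check its normalization. Hoeffding's bound for sampling without replacement gives sub-Gaussian parameter $b_v^2/m_v$ for the subsample mean of values lying in an interval of half-width $b_v$ about $\overline R_v$. Scaling by $n_v$ multiplies the variance proxy by $n_v^2$, which is consistent with the stated bound. If the cited bound instead carries a range-based constant, such as $(2b_v)^2/4$, that changes only the absolute constant, and it can be absorbed into $C_{D,q}$.
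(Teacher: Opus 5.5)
Your proposal is correct and matches the paper's proof. The paper likewise reads off Assumption~\ref{ass:e_v} from \eqref{eq:finite-pop-mgf} with $\vartheta_v^2=n_v^2b_v^2/m_v$ and substitutes this into Theorems~\ref{thm:noisy-gibbs-cv} and~\ref{thm:fresh-field-gibbs}; your extra checks (centering, finiteness of the exponential moments, and the normalization of the without-replacement Hoeffding bound) are accurate and make explicit what the paper leaves implicit.
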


\begin{proof}
Equation~\eqref{eq:finite-pop-mgf} verifies Assumption~\ref{ass:e_v} with
\[
\vartheta_v^2=\frac{n_v^2b_v^2}{m_v}.
\]
\purple{Substitution into Theorems~\ref{thm:noisy-gibbs-cv} and~\ref{thm:fresh-field-gibbs} gives \eqref{eq:subsampling-random-field-bound}.}
\end{proof}

\subsubsection{\purple{Statistical consequences}} \label{SubsecInterp}

{\color{purple}It is not immediately clear how Corollary~\ref{cor:random-field-budget} can be used to tune either algorithm. We give a simple comparison of the sufficient subsample sizes obtained from our bounds and from classical perturbation analysis.}

{\color{purple}
Write \(N=|V|\) and \(\vartheta_v=n_vb_v/\sqrt{m_v}\). Let \(K^\star\) denote either \(K_{\widetilde H}\), the usual random-scan Gibbs kernel with invariant distribution \(\widetilde\mu\), or the kernel \(\overline K\) from Algorithm~\ref{alg:fresh-field-gibbs}, and let \(\mu^\star\) be the corresponding invariant distribution. For Algorithm~\ref{alg:noisy-gibbs}, \eqref{eq:effective-ising-field} and Assumption~\ref{ass:e_v} give
\[
\big|\widetilde H_v(Z_v)-H_v(Z_v)\big|
\le
\frac{\vartheta_v^2}{4}.
\]
For Algorithm~\ref{alg:fresh-field-gibbs}, Taylor's theorem, \(\E[e_v]=0\), and \(\E[e_v^2]\le\vartheta_v^2\) give, uniformly in the boundary field \(a\),
\[
\left|\E[\psi(a+e_v)]-\psi(a)\right|
\le
\frac12\|\psi''\|_\infty\vartheta_v^2.
\]
Let \(W_{\mathrm{Ham}}\) be the Wasserstein distance for Hamming distance. Coupling the selected vertex and then the two Bernoulli draws gives
\begin{equation}\label{eq:image-one-step-wasserstein}
\varepsilon
:=
\sup_{\sigma\in\Omega}
W_{\mathrm{Ham}}\bigl(K^\star(\sigma,\cdot),K(\sigma,\cdot)\bigr)
\lesssim
\frac1N\sum_{v\in V}\vartheta_v^2.
\end{equation}
Under \(q<1\), the standard Dobrushin coupling contracts Hamming distance by the factor \(\gamma:=1-(1-q)/N\) per update; see, for example, \cite[Chapter~15]{Levin2008MarkovCA}. Indeed, for configurations differing only at \(u\), coupling the selected vertex gives expected Hamming distance at most \(1-N^{-1}+N^{-1}\sum_{v:\,\{u,v\}\in E}\tanh(\beta J_{uv})\le\gamma\), and the general statement follows by path coupling. Therefore
\[
W_{\mathrm{Ham}}(\lambda K,\lambda'K)
\le
\gamma W_{\mathrm{Ham}}(\lambda,\lambda')
\]
for all probability measures \(\lambda,\lambda'\). Using stationarity and \eqref{eq:image-one-step-wasserstein},
\begin{align*}
W_{\mathrm{Ham}}(\mu^\star,\mu)
&\le
W_{\mathrm{Ham}}(\mu^\star K^\star,\mu^\star K)
+
W_{\mathrm{Ham}}(\mu^\star K,\mu K)\\
&\le
\varepsilon+\gamma W_{\mathrm{Ham}}(\mu^\star,\mu).
\end{align*}
Since total variation is bounded by \(W_{\mathrm{Ham}}\), it follows that
\[
\dTV(\mu^\star(\cdot\mid Z),\mu(\cdot\mid Z))
\le
\frac{\varepsilon}{1-\gamma}
\lesssim
\sum_{v\in V}\vartheta_v^2.
\]
}
Thus the classical argument requires
\[
\sum_{v\in V}\vartheta_v^2\longrightarrow0.
\]
By contrast, Corollary~\ref{cor:random-field-budget} only requires
\[
\sum_{v\in V}\vartheta_v^4\longrightarrow0.
\]
Thus, in the homogeneous case \(\vartheta_v\le \vartheta_N\), the sufficient rate is improved from \(\vartheta_N=o(N^{-1/2})\) to \(\vartheta_N=o(N^{-1/4})\).

The bound in \eqref{eq:subsampling-random-field-bound} is useful when the residuals in the subsampling estimator are sufficiently small. One way to make \(b_v\) small is to use a local Taylor control variate. Suppose, for illustration, that the observations take values in a bounded interval and that \(r_v\in C^2\) with \(\sup_z |r_v''(z)|\le L_v\). Partition this interval into subintervals \(\{I_{v,k}:1\le k\le B_v\}\), choose \(x_{v,k}\in I_{v,k}\), and set
\[
    c_v(z)
    =
    r_v(x_{v,k})+r_v'(x_{v,k})(z-x_{v,k}),
    \qquad z\in I_{v,k}.
\]
If \(\Delta_v=\max_k |I_{v,k}|\), Taylor's theorem gives
\[
    |r_v(z)-c_v(z)|\lesssim L_v \Delta_v^2,
\]
and hence
\[
    b_v\lesssim L_v \Delta_v^2.
\]
In particular, if the intervals have comparable lengths \(\Delta_v\asymp B_v^{-1}\), we have
\(b_v^2\lesssim L_v^2B_v^{-4}\).  In the homogeneous case
\(n_v=n\), \(m_v=m\), \(B_v=B\), and \(L_v\le L_0\), Corollary~\ref{cor:random-field-budget} gives
\[
    \dH^2\bigl(\widetilde\mu(\cdot\mid Z),\mu(\cdot\mid Z)\bigr)
    \lesssim
    \frac{Nn^4}{m^2}B^{-8}.
\]
The classical condition \(\sum_{v\in V}\vartheta_v^2\to0\) requires
\(m\gg Nn^2B^{-4}\), whereas Corollary~\ref{cor:random-field-budget} gives the sufficient condition
\begin{equation*}
m \gg \sqrt{N}\,n^2B^{-4}.
\end{equation*}

{\color{purple}Theorems~\ref{thm:noisy-gibbs-cv} and~\ref{thm:fresh-field-gibbs} both give a bound proportional to \(\sum_{v\in V}\vartheta_v^4\), and Corollary~\ref{cor:random-field-budget} gives the corresponding control-variate bound. Section~\ref{subsec:Example3} obtains the corresponding matching bound by direct calculation.}

\subsection{Approximate Weight Evaluation for Bayesian Matchings}\label{subsec:Example3}

{\color{purple}We next consider single-edge Gibbs samplers for weighted matchings. We first define the exact and noisy chains, then relate them to record linkage, and finally state the perturbation bound and its consequences.}

\subsubsection{\purple{Weighted matching chains}}

Fix a bipartite graph $\cG=(\cU\sqcup \cW,\cE)$ of candidate matchings. A state of the Markov chain is a \textit{matching}---that is, a subset \(M\subseteq\cE\) of the edges. For any $M\subseteq \cE$, define
\[
    x_e(M)=\mathbf 1\{e\in M\}, \qquad e\in \cE,
\]
and, for \(A\subseteq \cE \), write
\[
x_A(M)=(x_e(M))_{e\in A}.
\]
Then the state space can also be written as
\[
    \Omega_\cG
    =
    \left\{
    M\subseteq \cE:
    x_e(M)+x_f(M)\le 1 \text{ whenever } e\sim f
    \right\},
\]
where \(e\sim f\) denotes distinct edges sharing an endpoint. Define a collection of \purple{real} edge weights \(\{w_e\}_{e \in \cE}\). We consider the target distribution
\begin{equation}\label{eq:matching-posterior}
    \mu_w(M)
    =
    \frac{1}{\mathcal Z_{\cG,w}}
    \exp\left\{\sum_{e\in M} w_e\right\}\footnote{Vertex weights for unmatched records can be absorbed into the edge
weights: if edges have activities \(\lambda_{uv}>0\) and unmatched vertices have weights \(\theta_r>0\), then, writing \(V(M)\) for the vertices covered by \(M\), \(\prod_{(u,v)\in M}\lambda_{uv}\prod_{r\notin V(M)}\theta_r\) is equal, up to
the \(M\)-independent factor \(\prod_{r\in\cU\sqcup\cW}\theta_r\), to
\(\prod_{(u,v)\in M}\lambda_{uv}/(\theta_u\theta_v)\).  Thus the corresponding edge weight is
\(w_{uv}=\log\lambda_{uv}-\log\theta_u-\log\theta_v\).},
    \qquad M\in \Omega_\cG,
\end{equation}
where  \(\mathcal Z_{\cG,w}\) is the normalizing constant.

Let $\chi(t)=e^t/(1+e^t)$.
For $e\in\cE$, let
\[
    N(e)=\{f\in\cE:f\sim e\}
\]
denote the set of edges adjacent to $e$, and set
\[
    I_e(M)=\mathbf 1\{M\cap N(e)=\emptyset\}.
\]
Thus $I_e(M)=1$ means that no edge adjacent to \(e\) is currently selected in $M$, so that the edge $e$ is available conditional on all other edge variables.

We denote by \(K_w\) the standard single-edge Gibbs kernel described in Algorithm~\ref{alg:matching-exact}.

\begin{algorithm}[H]
\caption{One step of $K_w$}
\label{alg:matching-exact}
\begin{algorithmic}[1]
\Require Current matching $M\in\Omega_\cG$, edge weights $w=(w_e)_{e\in\cE}$.
\State Sample $e\sim \mathrm{Unif}(\cE)$.
\State Sample $B\sim\mathrm{Bernoulli}( I_e(M) \, \chi(w_e))$.
\State Set
\[
    M'=
    \begin{cases}
    M\cup\{e\}, & B=1,\\
    M\setminus\{e\}, & B=0.
    \end{cases}
\]
\State \Return $M'$.
\end{algorithmic}
\end{algorithm}

Let \(Y\sim\mu_w\), set \(X_e=x_e(Y)\) for \(e\in\cE\), and write
\(X_A=(X_e)_{e\in A}\) for \(A\subseteq\cE\). Then, for every \(e\in\cE\) and every \(M\in\Omega_\cG\),
\begin{equation*}\label{eq:matching-conditional}
    \mu_w\!\left(
        X_e=1\mid X_{\cE\setminus\{e\}}=x_{\cE\setminus\{e\}}(M)
    \right)
    =
    I_e(M)\chi(w_e).
\end{equation*}

{\color{purple}We next define a noisy version of Algorithm~\ref{alg:matching-exact}. We begin with the generic algorithm and relate it to record linkage afterward.} For each \(e\in\cE\), let \(Q_e\) be a probability distribution on \(\mathbb R\). Whenever \(e\) is selected and available, draw a new variable \(\xi_e\sim Q_e\), independently of the current matching and all previous draws, and replace \(w_e\) by
\begin{equation} \label{EqNoisyLinkage}
    \widehat w_e=w_e+\xi_e.
\end{equation}
This leads to Algorithm~\ref{alg:matching-noisy}. Let \(\widetilde K\) be the associated kernel.

\begin{algorithm}[H]
\caption{One step of $\widetilde K$}
\label{alg:matching-noisy}
\begin{algorithmic}[1]
\Require Current matching $M\in\Omega_\cG$, edge weights $w=(w_e)_{e\in\cE}$.
\State Sample $e\sim \mathrm{Unif}(\cE)$.
\State If $I_e(M)=1$, draw an independent $\xi_e\sim Q_e$; otherwise set $\xi_e=0$.
\State Sample $B\sim\mathrm{Bernoulli}( I_e(M) \, \chi(w_e+\xi_e))$.
\State Set
\[
    M'=
    \begin{cases}
    M\cup\{e\}, & B=1,\\
    M\setminus\{e\}, & B=0.
    \end{cases}
\]
\State \Return $M'$.
\end{algorithmic}
\end{algorithm}

We continue with the formal exposition. For each \(e\in\cE\), define
\begin{equation}\label{eq:matching-tilde-weights}
    \widetilde p_e
    =
    \mathbb E\bigl[\chi(w_e+\xi_e)\bigr],
    \qquad
    \widetilde w_e
    =
    \logit(\widetilde p_e)
    =
    \log\left(\frac{\widetilde p_e}{1-\widetilde p_e}\right).
\end{equation}
\purple{Because \(0<\chi(t)<1\) for every \(t\in\mathbb R\), we have \(0<\widetilde p_e<1\), so \(\widetilde w_e\) is finite.}
Then, for every matching \(M\),
\begin{equation}\label{eq:noisy-matching-kernel-identity}
    \widetilde K(M,\cdot)=K_{\widetilde w}(M,\cdot).
\end{equation}
Indeed, both kernels set \(X_e=0\) when \(I_e(M)=0\). When
\(I_e(M)=1\), both set \(X_e=1\) with probability
\[
    \mathbb E\bigl[\chi(w_e+\xi_e)\bigr]
    =
    \widetilde p_e
    =
    \chi(\widetilde w_e).
\]
Thus \(\widetilde K\) is the exact single-edge Gibbs kernel with edge weights
\(\widetilde w=(\widetilde w_e)_{e\in\cE}\). Its invariant distribution is
\begin{equation}\label{eq:matching-modified-posterior}
    \mu_{\widetilde w}(M)
    =
    \frac{1}{\mathcal Z_{\cG,\widetilde w}}
    \exp\left\{\sum_{e\in M}\widetilde w_e\right\},
    \qquad M\in\Omega_\cG.
\end{equation}

\subsubsection{\purple{Bayesian record linkage}}

{\color{purple}We now explain how this class arises in a simple Bayesian record-linkage problem.} Since this problem is less common and less simple than the image reconstruction problem, we give a slightly more detailed informal description of the statistical context and our results first.

In the simplest version of the record-linkage problem, we have two different sources of records (e.g., the census and Facebook accounts) and we assume every individual occurs at most once in each source (e.g., a person should have at most one Facebook account and should appear exactly once in the census---although both assumptions can be violated). The goal is to link the two sources of records by matching records associated with an individual in source 1 to the same individual in source 2. To set notation, we denote by $\cU, \cW$ the two sets of records. When both sources are large (e.g., in the tens or hundreds of millions), it is computationally intractable to carefully compare every candidate match, and statistical models are usually not built on the set of all possible matchings. Instead, simple preprocessing rules called ``blocking'' rules extract a much smaller list of candidate pairs which are then compared using a statistical model.\footnote{Since this may be a source of confusion for statisticians who have not encountered blocking rules before, we acknowledge that it is not very obvious that one can actually construct a useful list of candidate matches without looking at all pairs of records. However, there are well-established techniques for doing so. The simplest is to consider a function $f:\cU\sqcup\cW\to S$ for some large set $S$, and to include any pair $u,w$ for which $f(u) = f(w)$. These candidate lists can be constructed explicitly in time at most $O(|\cU\sqcup\cW| + \sum_{s \in S} |f^{-1}(s)|^{2})$, which can be quite small if $S$ is quite large and $f$ spreads out points. See, e.g., \cite{quinn2023exploring} for practical details using a popular and widely applicable class of functions $f$.} Formally, we denote by $\cG=(\cU\sqcup \cW,\cE)$
the bipartite graph of candidate matchings, where each edge \(e=(u,w)\in\cE\) represents a match between \(u\in\cU\) and \(w\in\cW\) that is not ruled out by the blocking procedure.

For each candidate edge \(e=(u,w)\), the statistical model assigns an edge weight \(w_e\) measuring how compatible the two records are as a potential match. This weight may depend on a large amount of information attached to the records, such as names, addresses, dates, text fields, or other features. The exact Gibbs update uses the full edge weight for the selected edge. In large record-linkage problems, repeatedly computing these weights can be expensive. A noisy update instead uses a randomized estimator \(\widehat w_e\) of \(w_e\) and performs the Gibbs update with this estimate.

{\color{purple}The target in \eqref{eq:matching-posterior} does not by itself describe data, and our bounds do not require a statistical interpretation.} However, a large class of simple Bayesian record-linkage models can be written in this form. We give a simple example from the literature. Assume that there is a ``true" matching of the vertices, and that the data used to compare records $u,v$ is sampled independently from some model, conditional on this true matching. We can then take $w_{u,v}$ to be the log-likelihood-ratio weight comparing the hypotheses that $(u,v)$ is and is not a true match; see Equation~(1) of \cite{Sadinle2017Bayesian} and the surrounding discussion. If the prior is uniform over all matchings allowed by the graph $\cG$, then the nonmatch terms cancel and Equation \eqref{eq:matching-posterior} is the corresponding posterior distribution.

{\color{purple} When defining our generic Markov chain, we did not specify how to construct the random variable \(\widehat w_e\) in Equation~\eqref{EqNoisyLinkage}, we merely asserted a list of assumptions about its properties. In the context of noisy MCMC, we informally think of \(\widehat w_e\) as a randomized estimator of \(w_e\), with \(\xi_e=\widehat w_e-w_e\). This randomized estimator can come from \textit{e.g.} subsampling pieces of evidence associated with the records $(u,v) = e$. }

\subsubsection{Main Bound}

 We make the following assumptions.

\begin{assumption}\label{ass:degree-4.3}
There exists a constant \(0<D<\infty\) such that
\[
    \Delta(\cG)\le D.
\]
\end{assumption}

\begin{assumption}\label{ass:edge-weight}
There exists a constant \purple{\(0\le W<\infty\)} such that the edge weights in \eqref{eq:matching-posterior} satisfy
\[
    |w_e|\le W
    \qquad
    \text{for all } e\in\cE.
\]
\end{assumption}

Assumption \ref{ass:edge-weight} is typically \textit{not} satisfied for classical record-linkage models such as Fellegi-Sunter in the regime that one accumulates more and more data about each individual, because those models treat each piece of evidence as independent. In practice, record-linkage methods will typically include ways to ``clip" extremely high-certainty match probabilities to avoid numerical issues (see \textit{e.g.} the popular package \cite{linacre2022splink}).

Assumption~\ref{ass:edge-weight} ensures that the conditional inclusion probabilities \(\chi(w_e)\) when the edge is available are uniformly bounded away from \(0\) and \(1\); this follows immediately from the uniform bound on \(|w_e|\).

\begin{assumption}\label{ass:noise-4.3}
The noise variables in \eqref{EqNoisyLinkage} satisfy
\[
    \mathbb E[\xi_e]=0,
    \qquad
    s_e^2=\mathbb E[\xi_e^2]<\infty,
    \qquad e\in\cE.
\]
\end{assumption}

\begin{theorem}\label{thm:noisy-matching}
Let \(\cG_N=(\cU_N\sqcup\cW_N,\cE_N)\) be a sequence of bipartite candidate graphs
satisfying Assumptions~\ref{ass:degree-4.3}--\ref{ass:noise-4.3}, \purple{with the same constants \(D\) and \(W\) for every \(N\)}. Then there exist constants
\(c_W>0\) and \(C_{D,W}<\infty\) such that, if
\[
    \max_{e\in\cE_N}s_{N,e}^2\le c_W,
\]
then
\begin{equation}\label{eq:noisy-matching-main-bound}
    \dH^2(\mu_{\widetilde w_N},\mu_{w_N})
    \le
    C_{D,W}\sum_{e\in\cE_N}s_{N,e}^4.
\end{equation}
Consequently, if \(N_{\cE}=|\cE_N|\) and \(s_{N,e}\le s_N\) for all \(e\in\cE_N\), then
\[
    \dH(\mu_{\widetilde w_N},\mu_{w_N})\to0
    \qquad
    \text{whenever}
    \qquad
    s_N=o(N_{\cE}^{-1/4}).
\]
\end{theorem}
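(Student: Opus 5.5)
The plan has two parts: a second-order estimate for the effective weight shift $|\widetilde w_e-w_e|$, and a direct comparison of the two matching measures that uses only that the shifts are small and square-summable. The block-factorization machinery of Section~\ref{sec:MainResult} is not available here, because of the hard constraint. The final claim is then immediate: $\sum_e s_{N,e}^4\le N_{\cE}s_N^4\to0$ when $s_N=o(N_{\cE}^{-1/4})$.

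\textbf{Step 1 (local shift).} Write $p_e=\chi(w_e)$ and $\widetilde p_e=\E[\chi(w_e+\xi_e)]$. Taylor's theorem with $\E[\xi_e]=0$ gives $|\widetilde p_e-p_e|\le\frac12\|\chi''\|_\infty s_e^2$. By Assumption~\ref{ass:edge-weight}, $p_e\in[\chi(-W),\chi(W)]$. Choosing $c_W$ small forces $\widetilde p_e$ to stay in a slightly larger compact subinterval of $(0,1)$, on which $\logit$ is Lipschitz with a constant depending only on $W$. Hence
\[
\delta_e:=|\widetilde w_e-w_e|\le C_W s_e^2,\qquad |\widetilde w_e|\le W+1 .
\]
So it suffices to prove $\dH^2(\mu_{\widetilde w},\mu_w)\le C_{D,W}\sum_e\delta_e^2$, uniformly over all weight pairs bounded by $W+1$.

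\textbf{Step 2 (interpolation).} Set $w(t)=w+t(\widetilde w-w)$ and $\mu_t=\mu_{w(t)}$. As in the proof of Lemma~\ref{lem:tilt}, but with a varying base measure,
\[
\dKL(\mu_{\widetilde w},\mu_w)=\int_0^1 t\,\Var_{\mu_t}\Big(\sum_e(\widetilde w_e-w_e)x_e\Big)\,dt .
\]
It then suffices to bound this variance by $C\sum_e\delta_e^2$, uniformly for weights bounded by $W+1$. Expanding the variance gives
\[
\Var_{\mu_t}\Big(\sum_e(\widetilde w_e-w_e)x_e\Big)\le\sum_{e,f}\delta_e\delta_f\,|\Cov_{\mu_t}(x_e,x_f)|\le\Big(\sup_e\sum_f|\Cov_{\mu_t}(x_e,x_f)|\Big)\sum_e\delta_e^2,
\]
where the last step is Schur's test applied to the symmetric covariance matrix. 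The whole problem therefore reduces to a uniform $\ell^1$ bound on covariance rows for monomer--dimer models with bounded degree and bounded activities.

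\textbf{Step 3 (decay of correlations; the main obstacle).} I expect the covariance decay to be the hardest step, because the hard constraint rules out the Dobrushin bound of Lemma~\ref{lem:ising-dobrushin-block-factorization}. My first approach would be the monomer--dimer correlation decay of Bayati--Gamarnik--Katz--Nair--Tetali, via the self-avoiding-walk tree recursion: for activities bounded by $e^{W+1}$ and degree at most $D$, the influence of conditioning on an edge at line-graph distance $r$ decays like $(1-c)^r$, with $c=c(D,W)>0$. This gives $|\Cov(x_e,x_f)|\le C\rho^{d(e,f)}$ with $\rho=\rho(D,W)<1$. The number of edges at line-graph distance $r$ from $e$ is at most $(2D)^r$, so the row sum is finite provided $\rho<1/(2D)$. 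If the decay rate is not strong enough for this, I would use the fact that the recursion's contraction factor at depth $r$ is $\prod_i\frac{\lambda R_i}{1+\lambda R_i}$, which can be made arbitrarily small per step after a constant number of steps.

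\textbf{Fallback for Step 3.} If that route does not close, I would instead prove a spectral-gap or approximate-tensorization bound for $K_w$ from the path-coupling (Dobrushin-type) contraction on matchings in the Jerrum--Sinclair style. I would then bound the variance by (relaxation time)$\times$(Dirichlet form). The Dirichlet form of $F=\sum_e(\widetilde w_e-w_e)x_e$ is $\frac1{|\cE|}\sum_e\delta_e^2\Var(x_e\mid\cdot)$, and the relaxation time is $O(|\cE|)$, so the product is again $O(\sum_e\delta_e^2)$. Finally, inserting the bound from Step 3 into Step 2 and combining with Step 1 gives \eqref{eq:noisy-matching-main-bound} with $C_{D,W}=C\,C_W^2$.
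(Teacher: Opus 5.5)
Your Steps 1 and 2 are exactly the paper's argument. The logit/Taylor estimate, including shrinking $c_W$ so that $|\widetilde w_e|\le W+1$, is Lemma~\ref{lem:random-weight-logit}. The interpolation $\dKL=\int_0^1 t\,\Var_{\mu_t}(T_\theta)\,dt$, followed by the symmetric row-sum bound, is Corollary~\ref{cor:matching-perturbation}.

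\textbf{The gap is Step 3.} That step carries the whole proof, and none of your routes gives a covariance row-sum bound that is uniform over bounded $D$ and $W$.

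\emph{Primary route.} It needs a uniform pointwise decay rate $\rho<1/(2D)$, and no such rate exists for general activities. Take edges deep inside a large $D$-regular tree with activity $\lambda$. The influence between edges at distance $r$ is a product of $r$ recursion factors, each close to $(1-p)/(D-1)$, where $p\to0$ as $\lambda\to\infty$. Any admissible $\rho$ is therefore about $1/(D-1)$, and $\sum_r(2D)^r\rho^r$ diverges. This matches the fact that the Bayati--Gamarnik--Katz--Nair--Tetali rate deteriorates as $\lambda D$ grows. For the same reason, your claim that the per-path factors become arbitrarily small after a constant number of steps is false.

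\emph{Fallback.} The Poincar\'e-plus-Dirichlet-form reduction is fine, but the input $T_{\mathrm{rel}}=O(|\cE|)$ is unsupported.
\begin{itemize}
\item Path coupling: for the single-edge heat bath, the Dobrushin influence of an adjacent edge on $e$ is $\chi(w_e)$. The row sum is then up to $2(D-1)\chi(w_e)$, which is already $\ge1$ at $w_e=0$ when $D\ge2$. Standard path coupling therefore does not contract.
\item Jerrum--Sinclair canonical paths give only a polynomial relaxation bound, with an extra factor of the graph size.
\item A linear relaxation time for all bounded $D,W$ is known only through spectral-independence results. Those rest on precisely the kind of $\ell^1$ influence bound you are missing.
\end{itemize}

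\textbf{What is missing.} You should bound the $\ell^1$ norm of the influences directly, not a pointwise decay rate, using the monomer recursion $p_a(H)=\bigl(1+\sum_{b\sim a}\lambda_{ab}p_b(H-a)\bigr)^{-1}$.
\begin{itemize}
\item Differentiating gives $\partial_{u_f}\log p_a(H)=-\sum_{b\sim a}\alpha_{ab}\bigl(\mathbf 1\{f=\{a,b\}\}+\partial_{u_f}\log p_b(H-a)\bigr)$.
\item The weights $\alpha_{ab}$ share the denominator $1+\sum_c\lambda_{ac}p_c(H-a)$, and $p_c\le1$. Hence $\sum_b\alpha_{ab}\le D\Lambda/(1+D\Lambda)<1$.
\item Because these weights are sub-stochastic, the branching is absorbed automatically, and you never compare a decay rate against a growth rate.
\item Induction on $|V(H)|$ then gives $\sum_f|\partial_{u_f}\log p_a|\le D\Lambda$ (Lemma~\ref{lem:monomer-susceptibility}).
\item Writing the edge odds as $R_e=e^{u_e}p_a(H-e)p_b(H-e-a)$ gives $\sum_f|\partial_{u_f}\log R_e|\le1+2D\Lambda$.
\item Since $\Cov(X_e,X_f)=\partial_{u_f}\mu(X_e=1)$ and $\|\chi'\|_\infty\le\frac14$, every covariance row sum is at most $\frac14(1+2D\Lambda)$ with $\Lambda=e^{W+1}$ (Lemma~\ref{lem:matching-covariance-row}).
\end{itemize}
With this in place of your Step 3, your Steps 1--2 finish the proof exactly as in the paper.
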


\begin{proof}

\purple{The proof is given in Appendix~\ref{app:matching-proof}; it combines Lemma~\ref{lem:random-weight-logit} with Corollary~\ref{cor:matching-perturbation}.}
\end{proof}

\subsubsection{\purple{Statistical consequences}}

\begin{remark}
Recall that \(s_e^2=\E[\xi_e^2]\). Equation~\eqref{eq:random-weight-logit} gives \( |\widetilde w_e-w_e|\le C_Ws_e^2\). Substituting this bound into the
matching perturbation bound \eqref{eq:matching-perturbation} gives
\eqref{eq:noisy-matching-main-bound}.

Taylor's theorem gives
\[
    \big|\widetilde p_e-\chi(w_e)\big|
    \le
    \frac12\|\chi''\|_\infty s_e^2.
\]
Averaging over the uniformly selected edge therefore gives
\begin{equation}\label{eq:noisy-matching-one-step}
    \sup_{M\in\Omega_\cG}
    \dTV\bigl(\widetilde K(M,\cdot),K_w(M,\cdot)\bigr)
    \le
    \frac{\|\chi''\|_\infty}{2|\cE|}
    \sum_{e\in\cE}s_e^2.
\end{equation}
Thus the direct estimate \eqref{eq:noisy-matching-one-step} involves
\(\sum_{e\in\cE}s_e^2\). By contrast,
\eqref{eq:noisy-matching-kernel-identity} identifies \(\widetilde K\) with an exact
Gibbs kernel, and \eqref{eq:random-weight-logit} together with
\eqref{eq:matching-perturbation} yields the sum \(\sum_{e\in\cE}s_e^4\).
\end{remark}

In record-linkage terms, \(s_e\) is the standard deviation of the error \(\widehat w_e-w_e\). Theorem~\ref{thm:noisy-matching} therefore specifies the required accuracy of a randomized estimator of \(w_e\). To avoid too much repetition, we don't discuss how to turn this bound into a bound on the computational cost of pre-computing a concrete family of control variates. See the end of Section \ref{SubsecInterp} for an example of how to do exactly this step in the context of another (very similar) noisy MCMC chain.

Appendix~\ref{app:comparison-previous} proves Proposition~\ref{prop:hellinger-block-bound} and Lemma~\ref{LemmaPreDoesntWork}, and Appendix~\ref{app:matching-proof} proves Theorem~\ref{thm:noisy-matching}.

\clearpage

\normalem
\bibliographystyle{plain}
\bibliography{ref}

\clearpage

\appendix

\section{Comparison with Earlier Bounds}\label{app:comparison-previous}

We first prove Proposition~\ref{prop:hellinger-block-bound}. Similarly to the local entropy as a function of the boundary configuration, for $A\subseteq V$ and $h:\Omega\to\mathbb R$, we write
\[
\bigl(\Var_A h\bigr)(\tau_{A^c})
=
\Var_{\mu_A^{\tau_{A^c}}}(h).
\]

\begin{proof}[Proof of Proposition~\ref{prop:hellinger-block-bound}]
Applying \eqref{Def:LocalEntropy} to \(1+th\) for sufficiently small \(t\), and using the second-order expansion
\[
\Ent_A^{\tau_{A^c}}(1+th)
=
\frac{t^2}{2}\Var_{\mu_A^{\tau_{A^c}}}(h)+o(t^2),
\qquad t\to 0.
\]
Substituting this expansion into \eqref{Ineq-BlockFactor}, dividing both sides by \(t^2/2\), and letting \(t\to0\) gives
\begin{equation}\label{eq:variance-block-factorization}
\gamma(\alpha)\Var_\mu(h)
\leq
C_{\mathrm{BF}}
\sum_{A\in\mathcal A}
\alpha_A\,\mu(\Var_A h).
\end{equation}
Set $f=\nu/\mu$ and $u=\sqrt f$. Since $\mu(f)=1$,
\begin{align}
\dH^2(\nu,\mu) &= \mu ((u-1)^2) = 2\bigl(1-\mu(u)\bigr)\notag\\
&\leq
2\bigl(1-\mu(u)^2\bigr)
=
2\Var_\mu(u).
\label{eq:hellinger-global-variance}
\end{align}

Fix $A\in\mathcal A$ and a boundary condition $\tau_{A^c}$. Set
\[
m=\mu_A^{\tau_{A^c}}(f),
\qquad
b=\mu_A^{\tau_{A^c}}(u).
\]
By Cauchy--Schwarz, $0\leq b\leq\sqrt m$. If $m=0$, the conditional variance below is zero. If $m>0$, the density of $\nu_A^{\tau_{A^c}}$ relative to $\mu_A^{\tau_{A^c}}$ is $f/m$.
Indeed, we have
\begin{align*}
m &= \sum_{\eta_A} \mu_A^{\tau_{A^c}}(\eta_A) f(\eta_A \tau_{A^c}) = \frac{1}{\mu_{A^c}(\tau_{A^c})}\sum_{\eta_A} \nu(\eta_A \tau_{A^c}) = \frac{\nu_{A^c}(\tau_{A^c})}{\mu_{A^c}(\tau_{A^c})}
\end{align*}
and
\begin{align*}
\nu_{A}^{\tau_{A^c}}(\eta_A) &=\frac{f(\eta_A\tau_{A^c})\mu(\eta_A \tau_{A^c})}{\nu_{A^c}(\tau_{A^c})}\\
& = f(\eta_A\tau_{A^c})\frac{\mu(\eta_A \tau_{A^c})}{\mu_{A^c}(\tau_{A^c})}\, \frac{\mu_{A^c}(\tau_{A^c})}{\nu_{A^c}(\tau_{A^c})} = \frac{f(\eta_A\tau_{A^c})}{m} \mu_A^{\tau_{A^c}}(\eta_A).
\end{align*}
Hence,
\begin{align*}
\dH^2\left(\nu_A^{\tau_{A^c}}, \mu_A^{\tau_{A^c}} \right)&= \mu_A^{\tau_{A^c}} \left[ \left(\sqrt{\frac{f}{m}}-1\right)^2 \right]\\
& =\mu_A^{\tau_{A^c}} \left(\frac{f}{m}-2\frac{\sqrt{f}}{\sqrt{m}}+1\right) = \frac{2}{\sqrt{m}} (\sqrt{m}-b)
\end{align*}
and
\begin{align*}
\Var_{\mu_A^{\tau_{A^c}}}(u)
&=
m-b^2=(\sqrt m-b)(\sqrt m+b)\\
&\leq
2\sqrt m(\sqrt m-b)=
m\,\dH^2\left(
\nu_A^{\tau_{A^c}},
\mu_A^{\tau_{A^c}}
\right).
\end{align*}
Integrating with respect to $\mu_{A^c}$ and using $m\,d\mu_{A^c}=d\nu_{A^c}$ gives
\begin{equation}\label{eq:conditional-variance-hellinger}
\mu(\Var_A u)
\leq
\E_{\nu_{A^c}}
\left[
\dH^2\left(
\nu_A^{\tau_{A^c}},
\mu_A^{\tau_{A^c}}
\right)
\right].
\end{equation}
Combining \eqref{eq:hellinger-global-variance}, \eqref{eq:variance-block-factorization}, and \eqref{eq:conditional-variance-hellinger} proves Proposition~\ref{prop:hellinger-block-bound}.
\end{proof}

\subsection{Sequential Factorizations and Treewidth}

This subsection proves the treewidth estimate used in Lemma~\ref{LemmaPreDoesntWork}. We begin by quickly summarizing two standard facts and some standard notation from combinatorics. 

For disjoint sets $A,B,C\subseteq V$, say that $C$ \emph{separates} $A$ from $B$ in $G$ if every path in $G$ from $A$ to $B$ contains a vertex of $C$. A consequence of \cite[Corollary~7.7 and Theorem~6.1]{FallatLauritzenSadeghiUhlerWermuthZwiernik2017} is that, for a fixed graph $G = (V,E)$ and an Ising measure (that is, measure of the form \eqref{EqDefIsingGibbs}) with $\beta J_{uv}>0$ on every edge $(u,v) \in E$, 
\[
    X_A\perp X_B\mid X_C
    \quad\Longrightarrow\quad
    C \text{ separates } A \text{ from } B \text{ in } G.
\]

We also use the definition of the \textit{treewidth} of a graph $G$ in terms of ordered eliminations. Fix a graph $G = (V,E)$. Given an ordering of the vertices, eliminate them one at a time, adding an edge between every pair of remaining neighbors of a vertex immediately before that vertex is removed. The \textit{width} of the ordering is the largest number of remaining neighbors of a vertex when it is removed. The minimum width over all orderings is $\operatorname{tw}(G)$; see \cite[Theorem~36]{bodlaender1998partial} for a proof that this gives the same result as the original definition of treewidth.

\begin{lemma}\label{lem:factorization-treewidth}
Let $\mu$ be a ferromagnetic Ising measure on a finite graph $G=(V,E)$, with interaction coefficients $\beta J_{uv}>0$ for every $\{u,v\}\in E$. Suppose that a partition $\{S_j\}_{j=1}^{\ell}$ and sets $\{\Pi_j\}_{j=1}^{\ell}$ satisfy the $\mu$-factorization in \eqref{Bayesian-Factor} and \eqref{EqContainmentFactor}. Then
\[
    \operatorname{tw}(G)
    \leq
    \max_{1\leq j\leq\ell}|S_j\cup\Pi_j|-1.
\]
\end{lemma}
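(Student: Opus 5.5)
We plan to exhibit an elimination ordering of $G$ whose width is at most $\max_j|S_j\cup\Pi_j|-1$, using the elimination characterization of treewidth recalled above. The natural ordering eliminates the blocks in reverse order, $S_\ell,S_{\ell-1},\dots,S_1$, with the vertices inside each block in arbitrary order. The key is to show that when a vertex $x\in S_j$ is removed, all of its remaining neighbors in the fill-in graph lie in $(S_j\cup\Pi_j)\setminus\{x\}$. That yields width at most $|S_j\cup\Pi_j|-1$.

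The first step converts the factorization into conditional independences. Summing \eqref{Bayesian-Factor} successively over $\sigma_{S_\ell},\sigma_{S_{\ell-1}},\dots$ shows that $\mu_{S_1\cup\dots\cup S_j}=\prod_{i\le j}\mu(\sigma_{S_i}\mid\sigma_{\Pi_i})$. Here we use \eqref{EqContainmentFactor}: the factor $\mu(\sigma_{S_i}\mid\sigma_{\Pi_i})$ for $i<j$ does not involve $S_j$, and the last factor sums to one. Writing $P_j=S_1\cup\dots\cup S_{j-1}$, this gives $X_{S_j}\perp X_{P_j\setminus\Pi_j}\mid X_{\Pi_j}$ under the marginal $\mu_{S_1\cup\dots\cup S_j}$.

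We now argue by induction downward in $j$. Let $G_j$ be the graph on $S_1\cup\dots\cup S_j$ obtained after eliminating $S_{j+1},\dots,S_\ell$. The inductive claim is that every edge of $G_j$ connects two vertices that are dependent in a sense compatible with separation in the marginal model on $S_1\cup\dots\cup S_j$. Concretely, we aim to show that $\Pi_j$ separates $S_j$ from $P_j\setminus\Pi_j$ in $G_j$.

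For this, we use the cited fact from \cite{FallatLauritzenSadeghiUhlerWermuthZwiernik2017}, that conditional independence implies separation, but in a version for the marginal graph. Eliminating a set of vertices from $G$ (adding fill-in among neighbors) produces a graph whose separations coincide with separations in $G$ among the remaining vertices. This holds because a path through eliminated vertices is replaced by fill-in edges, and conversely. Hence it suffices to check, in $G$ itself, that $\Pi_j$ separates $S_j$ from $P_j\setminus\Pi_j$ within the vertex set $S_1\cup\dots\cup S_j$ augmented by paths through later blocks. Equivalently, every path in $G$ from $S_j$ to $P_j\setminus\Pi_j$ hits $\Pi_j$. The independence $X_{S_j}\perp X_{P_j\setminus\Pi_j}\mid X_{\Pi_j}$ holds for the full $\mu$, since it is a statement about marginals, so the cited implication gives exactly this separation in $G$. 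Transferring it to $G_j$ as described, we conclude that no edge of $G_j$ joins $S_j$ to $P_j\setminus\Pi_j$.

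Consequently, when we eliminate the vertices of $S_j$, we work inside $G_j$, where neighbors of $S_j$ lie in $S_j\cup\Pi_j$. Fill-in created while eliminating $S_j$ only adds edges among $S_j\cup\Pi_j$, so within the block the neighborhoods stay inside $(S_j\cup\Pi_j)\setminus\{x\}$. This bounds the width, and hence $\operatorname{tw}(G)$.

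The main obstacle is the transfer step: verifying carefully that separation in $G$ (over paths allowed through later blocks) is equivalent to the absence of edges in the fill-in graph $G_j$. We would prove it by a short induction on single-vertex eliminations. Eliminating $v$ preserves, among the remaining vertices, the relation ``connected by a path avoiding $C$'' for every $C$ not containing $v$.
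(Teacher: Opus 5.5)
Your proposal is correct and follows the paper's proof essentially step for step: the same reverse block elimination order $S_\ell,\dots,S_1$, and the same derivation of $X_{S_j}\perp X_{(S_1\cup\dots\cup S_{j-1})\setminus\Pi_j}\mid X_{\Pi_j}$ by summing out later blocks. You also turn this into separation in $G$ via the same cited fact, and use the same transfer step: a fill-in edge between remaining vertices lifts to a path in $G$ through eliminated vertices, which cannot avoid $\Pi_j$. Your ``separation is preserved under single-vertex elimination'' is just a slightly more general phrasing of the paper's elementary observation, and only the direction from fill-in paths back to paths in $G$ is actually needed.
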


\begin{proof}
Let $X\sim\mu$, and set
\[
    P_j=\bigcup_{i=1}^j S_i.
\]
Summing the factorization \eqref{Bayesian-Factor} over the blocks $S_\ell,S_{\ell-1},\ldots,S_{j+1}$, in that order, gives
\[
    \mu(\sigma_{P_j})
    =
    \prod_{i=1}^j \mu(\sigma_{S_i}\mid\sigma_{\Pi_i}).
\]
The same identity with $j-1$ in place of $j$ therefore implies
\[
    \mu(\sigma_{S_j}\mid\sigma_{P_{j-1}})
    =
    \mu(\sigma_{S_j}\mid\sigma_{\Pi_j}).
\]
Thus
\[
    X_{S_j}\perp X_{P_{j-1}\setminus\Pi_j}\mid X_{\Pi_j}.
\]
The implication above shows that $\Pi_j$ separates $S_j$ from $P_{j-1}\setminus\Pi_j$ in $G$.

Now eliminate the vertices block by block in the order
\[
    S_\ell,S_{\ell-1},\ldots,S_1,
\]
using any order within each block. We use the elementary observation that every edge in the current graph between two remaining vertices corresponds to a path between those vertices in the original graph whose internal vertices have already been eliminated. This follows by induction over the elimination steps.

At the start of block $S_j$, the vertices in $V\setminus P_j$ have already been eliminated. If some $x\in S_j$ were adjacent in the current graph to a vertex $y\in P_{j-1}\setminus\Pi_j$, the observation above would give a path from $x$ to $y$ in $G$ whose internal vertices lie in $V\setminus P_j$. This path avoids $\Pi_j$, contradicting the separation above. Hence every current neighbor of a vertex in $S_j$ lies in $S_j\cup\Pi_j$.

This remains true as the vertices of $S_j$ are eliminated, since the new edges added during the block join only vertices already in $S_j\cup\Pi_j$. Consequently, every vertex in the resulting elimination ordering has at most
\[
    \max_{1\leq j\leq\ell}|S_j\cup\Pi_j|-1
\]
remaining neighbors when it is removed. The elimination characterization of treewidth completes the proof.
\end{proof}

\section{Perturbation Bounds for Weighted Matchings}\label{app:matching-proof}

We prove Theorem~\ref{thm:noisy-matching}. The argument applies to arbitrary finite
graphs; bipartiteness is not used.

\subsection{Monomer susceptibilities}

Let \(H=(V(H),E(H))\) be a finite graph, and write \(\Delta(H)\) for its
maximum vertex degree. For vertices \(a,b\in V(H)\), write \(a\sim b\) when
\(\{a,b\}\in E(H)\). For \(a\in V(H)\), let \(H-a\) denote the induced graph
obtained by deleting \(a\) and its incident edges. For \(e\in E(H)\), let \(H-e\)
denote the graph obtained by deleting \(e\). Repeated deletions are interpreted
successively, and edge weights are restricted to the surviving edges without further
notation.

Let \(u=(u_f)_{f\in E(H)}\) be a collection of edge weights, with activities
\(\lambda_f=e^{u_f}\). Let \(\Omega_H\) denote the set of matchings of \(H\). For
\(M\in\Omega_H\), set \(x_f(M)=\mathbf 1\{f\in M\}\), and define
\begin{equation}\label{eq:matching-partition-function}
    \mathcal Z_{H,u}
    =
    \sum_{M\in\Omega_H}
    \exp\left\{\sum_{f\in M}u_f\right\}.
\end{equation}
Let \(\mu_{H,u}\) be the corresponding matching measure. For
\(M\sim\mu_{H,u}\), write \(X_f=x_f(M)\). For \(a\in V(H)\), define
\[
    p_a(H,u)=\frac{\mathcal Z_{H-a,u}}{\mathcal Z_{H,u}}.
\]
This is the probability that \(a\) is unmatched under \(\mu_{H,u}\). When \(u\) is
fixed, write \(p_a(H)=p_a(H,u)\). Partitioning matchings according to whether \(a\)
is unmatched or matched to a neighbor gives
\begin{equation}\label{eq:matching-vertex-deletion}
    \mathcal Z_{H,u}
    =
    \mathcal Z_{H-a,u}
    +
    \sum_{b:b\sim a}\lambda_{ab}\mathcal Z_{H-a-b,u}.
\end{equation}
Dividing \eqref{eq:matching-vertex-deletion} by \(\mathcal Z_{H-a,u}\) and taking reciprocals gives
\begin{equation}\label{eq:monomer-recursion-direct}
    p_a(H)
    =
    \frac{1}{1+\sum_{b:b\sim a}\lambda_{ab}p_b(H-a)}.
\end{equation}

\begin{lemma}\label{lem:monomer-susceptibility}
Assume that \(\Delta(H)\le D\) and \(0<\lambda_f\le\Lambda\) for every
\(f\in E(H)\). For \(a\in V(H)\), set
\[
    S_a(H,u)
    =
    \sum_{f\in E(H)}
    \left|
        \frac{\partial}{\partial u_f}\log p_a(H,u)
    \right|.
\]
Then
\[
    S_a(H,u)\le D\Lambda.
\]
\end{lemma}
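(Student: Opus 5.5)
The plan is to prove Lemma~\ref{lem:monomer-susceptibility} by induction on $|V(H)|$, differentiating the monomer recursion \eqref{eq:monomer-recursion-direct}. Write it in logarithmic form,
\[
    \log p_a(H,u) = -\log\Bigl(1 + x_a\Bigr),
    \qquad
    x_a := \sum_{b:b\sim a}\lambda_{ab}\, p_b(H-a,u).
\]
The right-hand side involves only the weights $u_{ab}$ of the edges incident to $a$, together with the quantities $p_b(H-a)$. The latter depend only on the weights of edges of $H-a$, that is, edges not incident to $a$. This splitting of the variables is what makes the induction close.

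For the base case, take any graph in which $a$ has no neighbors (in particular any edgeless graph). Then $p_a(H)=1$ identically, so $S_a(H,u)=0\le D\Lambda$.

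For the inductive step, fix $a$ and compute $\partial_{u_f}\log p_a(H)$ in two cases.
\begin{itemize}
\item If $f=ab$ is incident to $a$, then $u_f$ enters only through the factor $\lambda_{ab}=e^{u_{ab}}$, because $p_{b'}(H-a)$ does not involve $u_{ab}$. Hence
\[
    \bigl|\partial_{u_f}\log p_a(H)\bigr|
    = \frac{\lambda_{ab}\,p_b(H-a)}{1+x_a}.
\]
\item If $f$ is not incident to $a$, then
\[
    \partial_{u_f}\log p_a(H)
    = -\frac{\sum_b \lambda_{ab}\,p_b(H-a)\,\partial_{u_f}\log p_b(H-a)}{1+x_a}.
\]
\end{itemize}
Summing absolute values over all $f\in E(H)$ and applying the triangle inequality in the second case gives
\[
    S_a(H,u)
    \le
    \frac{\sum_{b\sim a}\lambda_{ab}\,p_b(H-a)\,\bigl(1+S_b(H-a,u)\bigr)}{1+x_a}.
\]

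The graph $H-a$ has fewer vertices. It still has maximum degree at most $D$ and activities bounded by $\Lambda$, so the inductive hypothesis gives $S_b(H-a)\le D\Lambda$. Therefore
\[
    S_a(H,u)\le (1+D\Lambda)\,\frac{x_a}{1+x_a}.
\]
This is at most $D\Lambda$ exactly when $x_a\le D\Lambda$. That inequality holds because $p_b\le 1$, $\lambda_{ab}\le\Lambda$, and $a$ has at most $D$ neighbors.

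There is no serious obstacle beyond bookkeeping. The one delicate point is verifying that $p_b(H-a)$ is independent of the incident weights $u_{ab}$, so that the derivatives separate cleanly into the two cases above. The other is noting that the monotone map $x\mapsto x/(1+x)$ lets the crude bound $x_a\le D\Lambda$ close the induction at exactly the claimed constant. All quantities are smooth in $u$ since the partition functions are positive finite sums of exponentials, so the differentiation is justified.
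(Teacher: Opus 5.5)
Your proposal is correct and follows essentially the same route as the paper. Both argue by induction on $|V(H)|$, differentiate the recursion \eqref{eq:monomer-recursion-direct}, use that $p_b(H-a)$ does not depend on the weights of edges incident to $a$, and close with $(1+D\Lambda)\,x_a/(1+x_a)\le D\Lambda$; the paper phrases this last step as $\sum_{b\sim a}\alpha_{ab}\le D\Lambda/(1+D\Lambda)$, and its indicator term $\mathbf 1\{f=\{a,b\}\}$ plays the role of your case split on whether $f$ is incident to $a$.
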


\begin{proof}
Induct on \(|V(H)|\). If \(a\) is isolated, then \(S_a(H,u)=0\). Otherwise,
for \(b\sim a\), set
\[
    \alpha_{ab}
    =
    \frac{\lambda_{ab}p_b(H-a)}
    {1+\sum_{c:c\sim a}\lambda_{ac}p_c(H-a)}.
\]
Since \(p_b(H-a)\le1\),
\begin{equation}\label{eq:alpha-sum-bound}
    \sum_{b:b\sim a}\alpha_{ab}
    \le
    \frac{D\Lambda}{1+D\Lambda}.
\end{equation}
Differentiate \eqref{eq:monomer-recursion-direct}:
\[
    \frac{\partial}{\partial u_f}\log p_a(H)
    =
    -\sum_{b:b\sim a}\alpha_{ab}
    \left(
        \mathbf 1\{f=\{a,b\}\}
        +
        \frac{\partial}{\partial u_f}\log p_b(H-a)
    \right),
\]
where the last derivative is zero when \(f\notin E(H-a)\). Sum over \(f\),
apply the triangle inequality, and use the induction hypothesis:
\[
    S_a(H,u)
    \le
    \sum_{b:b\sim a}\alpha_{ab}\{1+S_b(H-a,u)\}
    \le
    (1+D\Lambda)\sum_{b:b\sim a}\alpha_{ab}.
\]
The result follows from \eqref{eq:alpha-sum-bound}.
\end{proof}

\subsection{Covariance row sums}

\begin{lemma}\label{lem:matching-covariance-row}
Assume that \(\Delta(H)\le D\) and \(0<\lambda_f\le\Lambda\) for every
\(f\in E(H)\). Then, for every \(e\in E(H)\),
\begin{equation}\label{eq:matching-covariance-row-sum}
    \sum_{f\in E(H)}
    \left|\Cov_{\mu_{H,u}}(X_e,X_f)\right|
    \le
    C_{\mathrm{cov}}(D,\Lambda),
\end{equation}
where
\[
    C_{\mathrm{cov}}(D,\Lambda)=\frac14(1+2D\Lambda).
\]
Consequently, for every \(\theta=(\theta_f)_{f\in E(H)}\),
\begin{equation}\label{eq:matching-linear-variance}
    \Var_{\mu_{H,u}}\left(\sum_{f\in E(H)}\theta_fX_f\right)
    \le
    C_{\mathrm{cov}}(D,\Lambda)
    \sum_{f\in E(H)}\theta_f^2.
\end{equation}
\end{lemma}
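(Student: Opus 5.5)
The plan is to write every covariance as a derivative of an inclusion probability and to factor out $P_e(1-P_e)$, where $P_e=\mu_{H,u}(X_e=1)$. This factor is exactly what produces the $\tfrac14$. The remaining derivative is a logit derivative, and Lemma~\ref{lem:monomer-susceptibility} controls it.

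\textbf{Step 1 (covariance as derivative).} Since $\mu_{H,u}$ is an exponential family in $u$ with sufficient statistics $(X_f)_{f\in E(H)}$, differentiating $\log\mathcal Z_{H,u}$ twice gives
\[
\Cov_{\mu_{H,u}}(X_e,X_f)=\frac{\partial P_e}{\partial u_f}
=P_e(1-P_e)\,\frac{\partial}{\partial u_f}\logit(P_e).
\]
The diagonal term $f=e$ is $\Var(X_e)=P_e(1-P_e)\le\tfrac14$.

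\textbf{Step 2 (logit of $P_e$ via deletions).} Write $e=\{a,b\}$ and split matchings of $H$ according to whether they contain $e$. This gives
\[
P_e=\frac{\lambda_e\mathcal Z_{H-a-b}}{\mathcal Z_{H-e}+\lambda_e\mathcal Z_{H-a-b}} .
\]
Here $H-e-a-b=H-a-b$ as a graph, since deleting $a$ and $b$ already removes $e$. Set
\[
r=\frac{\mathcal Z_{H-a-b}}{\mathcal Z_{H-e}}.
\]
Then $\logit(P_e)=u_e+\log r$, and $r$ does not depend on $u_e$. Telescoping the ratio, and using that $(H-e)-a=H-a$, gives
\[
r=\frac{\mathcal Z_{(H-e)-a}}{\mathcal Z_{H-e}}\cdot\frac{\mathcal Z_{(H-e)-a-b}}{\mathcal Z_{(H-e)-a}}
=p_a(H-e)\,p_b(H-e-a).
\]
Hence, for $f\neq e$,
\[
\frac{\partial}{\partial u_f}\logit(P_e)=\frac{\partial}{\partial u_f}\log p_a(H-e,u)+\frac{\partial}{\partial u_f}\log p_b(H-e-a,u).
\]

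\textbf{Step 3 (apply the susceptibility lemma).} The graphs $H-e$ and $H-e-a$ still have maximum degree at most $D$ and activities at most $\Lambda$. Lemma~\ref{lem:monomer-susceptibility} therefore gives
\[
\sum_{f\neq e}\Bigl|\frac{\partial}{\partial u_f}\logit(P_e)\Bigr|\le S_a(H-e,u)+S_b(H-e-a,u)\le 2D\Lambda .
\]
Derivatives with respect to edges absent from these subgraphs vanish, so the sum over $f\neq e$ is covered. Combining with Step 1,
\[
\sum_{f\in E(H)}|\Cov(X_e,X_f)|\le P_e(1-P_e)(1+2D\Lambda)\le\tfrac14(1+2D\Lambda),
\]
which is \eqref{eq:matching-covariance-row-sum}. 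The only delicate point is recognizing that the logit, not the log, of $P_e$ decomposes into monomer probabilities on $H-e$. It is this decomposition that makes the factor $P_e(1-P_e)$ appear.

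\textbf{Step 4 (variance bound).} Expand the variance and apply $|\theta_e\theta_f|\le\tfrac12(\theta_e^2+\theta_f^2)$:
\[
\Var\Bigl(\sum_f\theta_fX_f\Bigr)\le\sum_{e,f}|\theta_e\theta_f|\,|\Cov(X_e,X_f)|\le\tfrac12\sum_{e,f}(\theta_e^2+\theta_f^2)|\Cov(X_e,X_f)|.
\]
The covariance is symmetric in $(e,f)$, so the two halves are equal, and each row sum is at most $C_{\mathrm{cov}}(D,\Lambda)$ by \eqref{eq:matching-covariance-row-sum}. This gives $C_{\mathrm{cov}}(D,\Lambda)\sum_f\theta_f^2$, which is \eqref{eq:matching-linear-variance}.
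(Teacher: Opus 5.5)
Your proposal is correct and follows essentially the same route as the paper. The paper writes the edge odds as $R_e=e^{u_e}p_a(H-e,u)\,p_b(H-e-a,u)$ and bounds the $\ell^1$-gradient of $\log R_e$ by $1+2D\Lambda$ via Lemma~\ref{lem:monomer-susceptibility}. It then converts this to the covariance row sum using $\|\chi'\|_\infty\le 1/4$, which is your factor $P_e(1-P_e)$, and obtains the variance bound from the row-sum bound on the operator norm of the symmetric covariance matrix; your Schur-type step makes that last inequality explicit.
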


\begin{proof}
Fix \(e=\{a,b\}\in E(H)\), and define
\[
    R_e(H,u)
    =
    \frac{\mu_{H,u}(X_e=1)}{\mu_{H,u}(X_e=0)}.
\]
Partitioning matchings according to \(X_e\) gives the first equality in
\eqref{eq:edge-odds-direct}; the second follows from the definition of \(p_a\):
\begin{equation}\label{eq:edge-odds-direct}
    R_e(H,u)
    =
    e^{u_e}\frac{\mathcal Z_{H-a-b,u}}{\mathcal Z_{H-e,u}}
    =
    e^{u_e}p_a(H-e,u)p_b(H-e-a,u).
\end{equation}
Applying Lemma~\ref{lem:monomer-susceptibility} to \eqref{eq:edge-odds-direct} gives
\begin{equation}\label{eq:matching-log-odds-gradient}
    \sum_{f\in E(H)}
    \left|
        \frac{\partial}{\partial u_f}\log R_e(H,u)
    \right|
    \le
    1+2D\Lambda.
\end{equation}
Recall that \(\chi(t)=e^t/(1+e^t)\). Since
\(\mu_{H,u}(X_e=1)=\chi(\log R_e(H,u))\) and
\(\|\chi'\|_\infty\le1/4\), \eqref{eq:matching-log-odds-gradient} gives
\begin{equation}\label{eq:matching-probability-gradient}
    \sum_{f\in E(H)}
    \left|
        \frac{\partial}{\partial u_f}\mu_{H,u}(X_e=1)
    \right|
    \le
    \frac14(1+2D\Lambda).
\end{equation}
Since
\[
\mu_{H,u}(X_e=1)=\E_{\mu_{H,u}}[X_e]
=
\frac{1}{\mathcal Z_{H,u}}
\sum_{M \in\Omega_H}
x_e(M)
\exp\left(\sum_{g\in E(H)}u_g\, x_g(M)\right),
\]
differentiating with respect to \(u_f\) yields
\begin{align}
\nonumber  \frac{\partial}{\partial u_f}\mu_{H,u}(X_e=1)
&=
\E_{\mu_{H,u}}[X_eX_f]
-
\E_{\mu_{H,u}}[X_e]\,
\frac{\partial}{\partial u_f}\log \mathcal Z_{H,u} \\
\nonumber &=
\E_{\mu_{H,u}}[X_eX_f]
-
\E_{\mu_{H,u}}[X_e]\E_{\mu_{H,u}}[X_f] \\
\label{eq:matching-covariance-derivative} &=
\Cov_{\mu_{H,u}}(X_e,X_f).
\end{align}
Combining \eqref{eq:matching-probability-gradient} and
\eqref{eq:matching-covariance-derivative} proves
\eqref{eq:matching-covariance-row-sum}.

Let \(C\) be the covariance matrix of \((X_f)_{f\in E(H)}\), and let
\(\|C\|_{2\to2}\) denote its Euclidean operator norm. Since \(C\) is symmetric,
\eqref{eq:matching-covariance-row-sum} implies
\[
    \theta^{\mathsf T}C\theta
    \le
    \|C\|_{2\to2}\|\theta\|_2^2
    \le
    \left(\max_e\sum_f|C_{ef}|\right)\|\theta\|_2^2
    \le
    C_{\mathrm{cov}}(D,\Lambda)\|\theta\|_2^2,
\]
which is \eqref{eq:matching-linear-variance}.
\end{proof}

\subsection{Perturbing edge weights}

\begin{corollary}\label{cor:matching-perturbation}
Let \(\cG=(\cV,\cE)\) be a finite graph with \(\Delta(\cG)\le D\), let
\(\Omega_\cG\) be its set of matchings, and let \(\mu_w\) and \(\mu_{w'}\) be
the matching measures on \(\Omega_\cG\) with edge weights \(w\) and \(w'\),
respectively.
Suppose that
\[
    \max\{|w_e|,|w'_e|\}\le W',
    \qquad e\in\cE.
\]
Then there exists \(C_{\mathrm{match}}=C_{\mathrm{match}}(D,W')<\infty\) such that
\begin{equation}\label{eq:matching-perturbation}
    \dH^2(\mu_{w'},\mu_w)
    \le
    C_{\mathrm{match}}\sum_{e\in\cE}(w'_e-w_e)^2.
\end{equation}
\end{corollary}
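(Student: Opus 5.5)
We interpolate between the two weight vectors and run the same second-derivative argument as in Lemma~\ref{lem:tilt}, with the crude variance bound replaced by the covariance row-sum estimate of Lemma~\ref{lem:matching-covariance-row}. Set $\theta=w'-w$ and $u(t)=w+t\theta$ for $t\in[0,1]$. Since $|u_e(t)|\le W'$, every interpolated measure $\mu_{u(t)}$ has activities in $(0,e^{W'}]$. Write $g(M)=\sum_{e\in\cE}\theta_e x_e(M)$. Then
\[
\mu_{w'}(M)=\frac{e^{g(M)}\mu_w(M)}{\mu_w(e^{g})},
\]
so $\mu_{w'}$ is an exponential tilt of $\mu_w$, exactly as in \eqref{TiltedForm}. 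The only difference is that the state space is $\Omega_\cG$ rather than a product space; the tilt computation never used the product structure.

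First, define $\psi(t)=\log\mu_w(e^{tg})$. As in the proof of Lemma~\ref{lem:tilt},
\[
\psi'(t)=\mu_{u(t)}(g),\qquad \psi''(t)=\Var_{\mu_{u(t)}}(g),
\]
and therefore
\[
\dKL(\mu_{w'},\mu_w)=\int_0^1 t\,\Var_{\mu_{u(t)}}\Bigl(\sum_{e}\theta_e X_e\Bigr)\,dt .
\]

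Second, bound the integrand. Apply Lemma~\ref{lem:matching-covariance-row} with $H=\cG$, $u=u(t)$ and $\Lambda=e^{W'}$; this requires $\Delta(\cG)\le D$. Inequality \eqref{eq:matching-linear-variance} then gives
\[
\Var_{\mu_{u(t)}}(g)\le C_{\mathrm{cov}}(D,e^{W'})\sum_e\theta_e^2
\]
uniformly in $t$. Integrating $t\,dt$ over $[0,1]$ yields
\[
\dKL(\mu_{w'},\mu_w)\le \tfrac12 C_{\mathrm{cov}}(D,e^{W'})\sum_{e}(w'_e-w_e)^2.
\]

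Third, \eqref{eq:H-vs-KL} converts this into the Hellinger bound. We may take
\[
C_{\mathrm{match}}=\tfrac18\bigl(1+2De^{W'}\bigr).
\]

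All of the substance lies in the covariance row-sum bound, which is already established in Lemma~\ref{lem:matching-covariance-row}. The remaining point to check is that its hypotheses hold uniformly along the interpolation path: the degree bound is fixed, and the activity bound $\Lambda=e^{W'}$ holds because $[-W',W']$ is convex. One should also confirm that $\Delta(H)$ in the lemma refers to vertex degree, matching $\Delta(\cG)\le D$ here. That convention is consistent with the appendix, so I expect no real obstacle.
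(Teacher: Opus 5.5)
Your proposal is correct and follows essentially the same route as the paper. The paper works with $A(t)=\log\mathcal Z_{\cG,w_t}$, which differs from your $\psi(t)$ only by the constant $A(0)$. It writes $\dKL(\mu_{w'},\mu_w)=\int_0^1 tA''(t)\,dt$ with $A''(t)=\Var_{\mu_{w_t}}(T_\theta)$, bounds this uniformly in $t$ via \eqref{eq:matching-linear-variance} with $\Lambda=e^{W'}$, and arrives at the same constant $C_{\mathrm{match}}=\frac12 C_{\mathrm{cov}}(D,e^{W'})$.
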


\begin{proof}
Set
\[
    \theta_e=w'_e-w_e,
    \qquad
    T_\theta(M)=\sum_{e\in\cE}\theta_ex_e(M).
\]
For \(t\in[0,1]\), let \(w_t=w+t\theta\) and
\(A(t)=\log\mathcal Z_{\cG,w_t}\). Differentiating twice gives
\begin{equation}\label{eq:matching-log-partition-variance}
    A''(t)=\Var_{\mu_{w_t}}(T_\theta).
\end{equation}
Since \(|w_{t,e}|\le W'\), equations
\eqref{eq:matching-log-partition-variance} and
\eqref{eq:matching-linear-variance}, with \(\Lambda=e^{W'}\), give
\begin{equation}\label{eq:matching-log-partition-bound}
    A''(t)
    \le
    C_{\mathrm{cov}}(D,e^{W'})
    \sum_{e\in\cE}\theta_e^2.
\end{equation}
Since \(A'(1)=\E_{\mu_{w'}}[T_\theta]\), the definition of KL divergence gives
\begin{equation}\label{eq:matching-kl-path-integral}
    \dKL(\mu_{w'},\mu_w)
    =
    A'(1)-A(1)+A(0)
    =
    \int_0^1 tA''(t)\,dt.
\end{equation}
Substituting \eqref{eq:matching-log-partition-bound} into
\eqref{eq:matching-kl-path-integral} gives
\[
    \dKL(\mu_{w'},\mu_w)
    \le
    \frac12C_{\mathrm{cov}}(D,e^{W'})
    \sum_{e\in\cE}(w'_e-w_e)^2.
\]
Inequality~\eqref{eq:H-vs-KL} proves \eqref{eq:matching-perturbation}, with
\(C_{\mathrm{match}}=\frac12C_{\mathrm{cov}}(D,e^{W'})\).
\end{proof}

\subsection{Random perturbations of the edge weights}

\begin{lemma}\label{lem:random-weight-logit}
Let \(\widetilde p_e\) and \(\widetilde w_e\) be defined by
\eqref{eq:matching-tilde-weights}. Assume that
Assumptions~\ref{ass:edge-weight} and~\ref{ass:noise-4.3} hold. Then there exist
\(c_W>0\) and \(C_W<\infty\) such that, if
\(s_e^2\le c_W\), then
\begin{equation}\label{eq:random-weight-logit}
    |\widetilde w_e-w_e|
    \le
    C_W s_e^2.
\end{equation}
\end{lemma}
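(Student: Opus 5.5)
We need $|\logit(\E\chi(w+\xi)) - w| \le C_W s^2$ when $s^2\le c_W$, with $|w|\le W$, $\E\xi=0$, and $\E\xi^2=s^2$. The plan is to control the mean shift $\widetilde p_e-\chi(w_e)$ by a second-order Taylor bound, and then transfer this to the logit scale by a mean-value argument on a compact interval.

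\emph{Step 1: bound on the probability scale.} Write $p=\chi(w_e)$ and $\widetilde p_e=\E[\chi(w_e+\xi_e)]$. Taylor's theorem with Lagrange remainder gives
\[
\chi(w_e+\xi_e)=\chi(w_e)+\chi'(w_e)\xi_e+\tfrac12\chi''(\zeta)\xi_e^2
\]
for some $\zeta$ between $w_e$ and $w_e+\xi_e$. Since $\chi''$ is bounded on $\mathbb R$, this expansion holds for every value of $\xi_e$, so no localization of $\xi_e$ is needed. Take expectations; the linear term vanishes because $\E[\xi_e]=0$, and $\E[\xi_e^2]=s_e^2<\infty$ makes everything integrable. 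Hence
\[
|\widetilde p_e-p|\le \tfrac12\|\chi''\|_\infty s_e^2 .
\]
This is the same estimate as the one quoted in the remark after Theorem~\ref{thm:noisy-matching}.

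\emph{Step 2: transfer to the logit scale.} This is the only delicate point. The logit is not globally Lipschitz, so we must keep $\widetilde p_e$ inside a compact subinterval of $(0,1)$.
\begin{itemize}
\item By Assumption~\ref{ass:edge-weight}, $p\in[\chi(-W),\chi(W)]$.
\item Set $\eta_W=\tfrac12\chi(-W)$, so that $[\chi(-W)-\eta_W,\,\chi(W)+\eta_W]\subset[\eta_W,1-\eta_W]$.
\item Choose $c_W=2\eta_W/\|\chi''\|_\infty$. Then $s_e^2\le c_W$ together with Step~1 gives $|\widetilde p_e-p|\le\eta_W$, so both $p$ and $\widetilde p_e$ lie in $[\eta_W,1-\eta_W]$.
\end{itemize}
On this interval, $\logit'(x)=1/(x(1-x))\le 1/(\eta_W(1-\eta_W))$. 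By the mean value theorem and $\logit(p)=w_e$,
\[
|\widetilde w_e-w_e|=|\logit(\widetilde p_e)-\logit(p)|\le \frac{\|\chi''\|_\infty}{2\eta_W(1-\eta_W)}\,s_e^2 .
\]
This proves \eqref{eq:random-weight-logit} with $C_W=\|\chi''\|_\infty/\bigl(2\eta_W(1-\eta_W)\bigr)$.

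The threshold $c_W$ is used only in Step~2, to keep $\widetilde p_e$ away from $0$ and $1$; Step~1 holds for any square-integrable noise. A smallness condition of this kind is needed, since otherwise large noise could push $\widetilde p_e$ toward the boundary where the logit blows up. The resulting constants $c_W$ and $C_W$ depend only on $W$.
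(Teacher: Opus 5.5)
Your proposal is correct and follows essentially the same route as the paper. Both arguments use a second-order Taylor bound $|\widetilde p_e-\chi(w_e)|\le\frac12\|\chi''\|_\infty s_e^2$, then choose $c_W$ so that $\widetilde p_e$ stays in $[\tfrac12\chi(-W),\,1-\tfrac12\chi(-W)]$, and finally apply the mean value theorem to $\logit$ on that interval, which gives the same constants $c_W$ and $C_W$.
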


\begin{proof}
Set
\[
    p_e=\chi(w_e),
    \qquad
    \widetilde p_e=\E[\chi(w_e+\xi_e)],
    \qquad
    a_W=\chi(-W).
\]
Then \(a_W\le p_e\le1-a_W\). Taylor's theorem and
\(\E[\xi_e]=0\) give
\[
    |\widetilde p_e-p_e|
    \le
    \frac12\|\chi''\|_\infty s_e^2.
\]
Choose \(c_W>0\) so that
\[
    \frac12\|\chi''\|_\infty c_W\le\frac{a_W}{2}.
\]
If \(s_e^2\le c_W\), then
\[
    \frac{a_W}{2}
    \le
    \widetilde p_e
    \le
    1-\frac{a_W}{2}.
\]
Apply the mean value theorem to \(\logit\) on this interval:
\[
    |\widetilde w_e-w_e|
    \le
    \frac{|\widetilde p_e-p_e|}
    {(a_W/2)(1-a_W/2)}
    \le
    C_Ws_e^2.
\]
\end{proof}

\begin{proof}[Proof of Theorem~\ref{thm:noisy-matching}]
By \eqref{eq:noisy-matching-kernel-identity} and
\eqref{eq:matching-modified-posterior}, the invariant distribution of the noisy chain
is \(\mu_{\widetilde w_N}\).

Let \(\purple{c_W},C_W\) be the constants in Lemma~\ref{lem:random-weight-logit}. Decrease
\(c_W\), if necessary, so that \(c_WC_W\le1\). If
\[
    \max_{e\in\cE_N}s_{N,e}^2\le c_W,
\]
then
\[
    |\widetilde w_{N,e}-w_{N,e}|
    \le
    C_Ws_{N,e}^2,
    \qquad e\in\cE_N,
\]
and \(|\widetilde w_{N,e}|\le W+1\). Apply
Corollary~\ref{cor:matching-perturbation} with \(W'=W+1\):
\[
    \dH^2(\mu_{\widetilde w_N},\mu_{w_N})
    \le
    C_{\mathrm{match}}C_W^2
    \sum_{e\in\cE_N}s_{N,e}^4,
\]
where \(C_{\mathrm{match}}=C_{\mathrm{match}}(D,W+1)\). This proves
\eqref{eq:noisy-matching-main-bound}.

If \(s_{N,e}\le s_N\) for all \(e\in\cE_N\), then
\[
    \dH^2(\mu_{\widetilde w_N},\mu_{w_N})
    \le
    C_{D,W}N_{\cE}s_N^4.
\]
The quantity \(C_{D,W}N_{\cE}s_N^4\) tends to zero when \(s_N=o(N_{\cE}^{-1/4})\).
\end{proof}

\end{document}